\newtheorem{theorem}{Theorem}[section]
\newtheorem{proposition}{Proposition}[section]
\newtheorem{lemma}{Lemma}[section]
\theoremstyle{definition}
\newtheorem{definition}{Definition}[section]
\newtheorem{remark}{Remark}
\title{Controllability for a $2\times 2$ nonlinear degenerate parabolic system via one boundary control force}
\author{
{\sc Margarita Arias$^{a}$, Abdelkarim Hajjaj$^{b}$, Amine Sbai$^{a,b}$}\\\\
$^a$Department of Applied Mathematics,\\ University of Granada, Granada, Spain.\\\\
$^b$Hassan First University of Settat\\
Faculty of Sciences and Technology, MISI Laboratory,\\
B.P. 577, Settat 26000, Morocco.
}
\date{}
\begin{document}

\maketitle

\begin{abstract}
In this paper we study the local boundary controllability for a non linear system of two degenerate parabolic equations with a control acting on only one equation. We analyze boundary null controllability properties for the linear system via the moment method by Fattorini and Russell, together with some results on biorthogonal families. Moreover, we provide an estimate on the null-control cost. This estimate let us prove a local exact boundary controllability result to zero of the nonlinear system following the iterative method from Lebeau and Robbiano as in \cite{Burgos_2020, Liu_2012}. 
\end{abstract}

Keywords: Boundary controllability, nonlinear systems, degenerate parabolic equations, moment method.

\section{Introduction}\label{section 1}
The problem of the null controllability for nondegenerate parabolic equations is already well understood, and there have been many works on this issue in recent years \cite{barbu, burgosgarcia2006, BurTer2010, fursimanuv1996, guerrero2007}. This is not the case when we consider diffusions with some kind of degeneracy, which are so common in many applications.

This work deals with the boundary controllability properties of the nonlinear degenerate system 
\begin{equation}\label{bound-nonlinear-sys}
\left\{
\begin{array}{lll}
y_t - ( x^{\alpha}  y_x )_x - A y = f(y) ,  &  & \text{in} \;  Q_T:=(0, \, T)\times(0, 1), \\
y(t,1)= B v(t), & & \text{in} \; (0, \, T),  \\
\begin{cases}
\; y(t, 0) = 0,   \qquad   \, 0 \leq \alpha < 1 \\
\;  x^{\alpha} y_x  (t, 0)= 0 , \quad 1\leq \alpha < 2 \\
\end{cases}  & & t \in (0, T),\\
y(0,x)= y_{0}(x),  & & \text{in}  \;  (0, 1).
\end{array}
\right.
\end{equation}
Here, \( y = (y_1, y_2)^* \) is the state variable, while \( v \in L^2(0, T) \) represents the control function, which acts at a single boundary point for all times. The parameter \( \alpha \in [0, 2) \) indicates the order of degeneracy of the diffusion coefficient, which may vanish at \( x = 0 \). The initial condition, \( y_0 \), and the reaction term, \( f(y) \), are given functions in appropriate function spaces, which will be defined in the forthcoming sections.
Furthermore, \( A \in \mathcal{L}(\mathbb{R}^2) \) and \( B \in \mathbb{R}^2 \) are selected to be a suitable coupling matrix and control operator, respectively, such that the condition
\begin{equation}\label{rank}
rank [B | AB] = 2.
\end{equation}
is satisfied.

We are interested in the null-controllability of such systems. Recall that system \eqref{bound-nonlinear-sys} is said to be null-controllable if, for any initial condition \( y_0 \), there exists a control function \( v \) such that the corresponding solution \( y \) of \eqref{bound-nonlinear-sys} (as defined in Theorem \ref{thm-local-exact-control} at the end of this section) satisfies
\[
y(T, x) = 0 \quad \text{for all } x \in (0, 1).
\]
The first study on the distributed null-controllability of degenerate coupled parabolic systems was presented in \cite{CanTer2009}. In particular, the authors consider a cascade system with the same diffusion coefficient
\[
a(x) = x^\alpha \quad \text{where } \alpha \in (0, 2),
\]
and recover distributed controllability results similar to those obtained in \cite{BurTer2010}. For more general systems of degenerate equations, we refer the reader to \cite{Hajjaj2013, Hajjaj2011}.

Furthermore, the local controllability of nonlinear systems controlled by a single distributed control force has been investigated in \cite{Khodja2003}, where the focus was on a \( 2 \times 2 \) nonlinear parabolic system.

In previous studies, controllability results for both linear and nonlinear problems were based on global Carleman estimates for scalar nondegenerate parabolic problems. Our approach, however, is distinct in that it employs spectral techniques. To the best of our knowledge, this is the first investigation into the boundary controllability properties of a nonlinear degenerate coupled system using this methodology.

The main objective of this work is to address the local controllability problem for the degenerate system \eqref{bound-nonlinear-sys}, where the control is applied at the boundary point \( x = 1 \), away from the degeneracy.

In order to prove the controllability at time \( T > 0 \) of system \eqref{bound-nonlinear-sys}, we will rewrite the controllability problem as a fixed-point problem for a suitable operator in appropriate function spaces. To implement this fixed-point strategy, we will first investigate the controllability properties of the following associated linear system
\begin{equation}\label{bound-sys}
\left\{
\begin{array}{lll}
y_t - ( x^{\alpha}  y_x )_x = A y ,  &  & \text{in} \;  Q_T, \\
y(t,1)= B v(t), & & \text{in} \; (0, \, T),  \\
\begin{cases}
 \; y(t, 0) = 0,   \qquad   \, 0 \leq \alpha < 1 \\
\;  x^{\alpha} y_x  (t, 0)= 0 , \quad 1\leq \alpha < 2 \\
\end{cases}  & & t \in (0, T),\\
y(0,x)= y_{0}(x),  & & \text{in}  \;  (0, 1).
\end{array}
\right.
\end{equation}
In \cite{FBT}, the authors assert that a necessary condition for the controllability of this type of system is given by the so-called Kalman's rank condition \eqref{rank}. As explained in \cite{FBT}, by taking \( P = [B \mid A B] \), the change of variables
$$
\tilde{y}=P^{-1} y,
$$
leads to the following reformulation of \eqref{bound-sys}:
$$
\begin{cases}\tilde{y}_t-(x^\alpha\tilde{y}_{x})_x=\tilde{A} \tilde{y}, & (t, x) \in(0, T) \times(0,1), \\ \tilde{y}(t, 1)=\tilde{B} v, & t \in(0, T), \\
\begin{cases}
\; \tilde{y}(t, 0) = 0,   \qquad   \, 0 \leq \alpha < 1 \\
\;  x^{\alpha} \tilde{y}_x  (t, 0)= 0 , \quad 1\leq \alpha < 2 \\
\end{cases}  &  t \in (0, T),\\
\tilde{y}(0, x)=P^{-1} y_0(x), & x \in(0,1),\end{cases}
$$
with
$$
\tilde{A}=\left(\begin{array}{ll}
0 & a_1 \\
1 & a_2
\end{array}\right) \text { and } \tilde{B}=e_1=\binom{1}{0} .
$$

Therefore, passing through this mentioned change of variables, the situation reduces to the case where
\begin{equation}\label{Vec-sys-matrix}
A=\left(\begin{array}{ll}
0 & a_1 \\
1 & a_2
\end{array}\right) \quad \text { and } \quad B=e_1=\binom{1}{0} .
\end{equation}
For simplicity, it will be assumed in the rest of the paper that $A$ and $B$ are given by \eqref{Vec-sys-matrix}.

We are particularly interested in the study of system \eqref{bound-nonlinear-sys} under the assumption that the coupling matrix \( A \) admits two distinct eigenvalues, i.e., 
\begin{align}\label{condition on A}
a_2^2 + 4 a_1 \neq 0.
\end{align}
We establish the boundary null-controllability at time \( T > 0 \) of system \eqref{bound-sys} in the framework of the space \( H_{\alpha}^{-1}(0,1)^2 \), which will be defined in Section \ref{Section-prel}.
Indeed, denoting, as usual, by \( \{ j_{\nu, n} \}_{n \geq 1} \) the sequence of positive zeros of the Bessel function of the first kind and order \( \nu \in \mathbb{R} \), \( J_{\nu} \), and taking \( \kappa_{\alpha} = \frac{2 - \alpha}{2} > 0 \), we will prove:
\begin{theorem}\label{Thm-null-cont1}
Let $\alpha \in [0,2)$ and denote $\mu_1$ and $\mu_2$ the eigenvalues of $A$. Then system \eqref{bound-sys} is null controllable at any time $T >0$
if and only if \eqref{rank} holds and also
\begin{equation}\label{spectre-cond}
\kappa_{\alpha}^2 ( j_{\nu_{\alpha},n}^2 - j_{\nu_{\alpha},l}^2 ) \neq \mu_2 - \mu_1,\quad \forall n, l \in \mathbb{N}^*, \quad \text{with} \quad n\neq l.
\end{equation}
Moreover, there exist two positive constants $C_0$ and $M$ independent of $T$, such that for any $T>0$ there is a bounded linear operator $\mathcal{C}_{T}^{(0)}: H_{\alpha}^{-1}(0,1)^2 \rightarrow L^{2}(0, T)$ satisfying
\begin{equation}\label{OperatorC0bound}
\left\|\mathcal{C}_{T}^{(0)}\right\|_{\mathcal{L}\left(H_{\alpha}^{-1}(0,1)^2, L^{2}(0, T)\right)} \leq C_{0} e^{M / T}
\end{equation}
and such that the solution
$$
y=(y_1, y_2) \in L^{2}\left(Q_{T} ; \mathbb{R}^{2}\right) \cap C^{0}\left([0, T] ; H_{\alpha}^{-1}(0,1)^2\right)
$$
of system \eqref{bound-sys} associated to $y_{0}=\left(y^{0}_{1}, y^0_{2}\right) \in H_{\alpha}^{-1}(0,1)^2$ and $v=\mathcal{C}_{T}^{(0)}\left(y_{0}\right)$ satisfies $y(\cdot, T)=$
$0 .$
\end{theorem}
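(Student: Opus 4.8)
\noindent\emph{Proof strategy.} The plan is to reduce the boundary null-controllability of \eqref{bound-sys} to a vector moment problem for a family of exponentials and to solve it by the Fattorini--Russell method, using the spectral decomposition of the degenerate operator $\mathcal{A}_\alpha u:=-(x^\alpha u_x)_x$ (with the boundary conditions of \eqref{bound-sys}) together with the estimates on biorthogonal families prepared in Section~\ref{Section-prel}. Since $a_2^2+4a_1\neq0$, the matrix $A$ in \eqref{Vec-sys-matrix} has two distinct eigenvalues $\mu_1\neq\mu_2$ and is diagonalizable, $A=Q\,\mathrm{diag}(\mu_1,\mu_2)\,Q^{-1}$; the change of unknown $z:=Q^{-1}y$ decouples \eqref{bound-sys} into the two scalar degenerate heat equations
\[
z_{i,t}-(x^\alpha z_{i,x})_x=\mu_i z_i\ \text{ in }Q_T,\qquad z_i(t,1)=\beta_i v(t),\qquad i=1,2,
\]
with $(\beta_1,\beta_2)^*=Q^{-1}B$ and with the degenerate (or Dirichlet) condition at $x=0$; crucially the \emph{same} scalar control $v$ drives both equations. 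Here \eqref{rank} is equivalent to $\beta_1\beta_2\neq0$, and its necessity follows because if it fails there is $\xi\neq0$ with $A^*\xi=\nu\xi$ and $\xi^*B=0$, so that $\xi^*y$ solves a homogeneous scalar degenerate equation with no control, whence $\xi^*y(\cdot,T)=0$ forces $\xi^*y_0=0$ by backward uniqueness. From Section~\ref{Section-prel} I will use that $\mathcal{A}_\alpha$ has compact resolvent, with eigenvalues $\lambda_{\alpha,n}=\kappa_\alpha^2 j_{\nu_\alpha,n}^2$, an $L^2(0,1)$-orthonormal basis of eigenfunctions $\{\Phi_{\alpha,n}\}_{n\ge1}$, the Bessel asymptotics $j_{\nu_\alpha,n}=n\pi+O(1)$ (so $\lambda_{\alpha,n}\sim\kappa_\alpha^2\pi^2 n^2$ and $\sum_n\lambda_{\alpha,n}^{-1}<\infty$), and the two-sided bound $|\theta_n|\asymp\lambda_{\alpha,n}^{1/2}$ for the boundary trace $\theta_n:=\Phi_{\alpha,n}'(1)$.

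\noindent Next I would expand $z_i(t,\cdot)=\sum_n z_{i,n}(t)\Phi_{\alpha,n}$ and pair the $i$-th equation with $\Phi_{\alpha,n}$, getting $z_{i,n}'+\Lambda_{n,i}z_{i,n}=-\theta_n\beta_i v$ with $\Lambda_{n,i}:=\lambda_{\alpha,n}-\mu_i$. Imposing $z_{i,n}(T)=0$, dividing by $e^{-\Lambda_{n,i}T}$ and reversing time turns the null-controllability of \eqref{bound-sys} from $y_0\in H_\alpha^{-1}(0,1)^2$ into the moment problem: \emph{find} $v\in L^2(0,T)$ \emph{such that}
\[
\int_0^T v(t)\,e^{-\Lambda_{n,i}t}\,dt=\tau_{n,i}:=\frac{e^{-\Lambda_{n,i}T}}{\theta_n\beta_i}\,\big\langle y_0,\Phi_{\alpha,n}\Psi_i\big\rangle,\qquad n\ge1,\ i=1,2,
\]
where $\Psi_1,\Psi_2$ are eigenvectors of $A^*$ (so $\langle y_0,\Phi_{\alpha,n}\Psi_i\rangle$ is the $n$-th Fourier coefficient of the $i$-th component of $Q^{-1}y_0$); when $\mu_1=\overline{\mu_2}$ this is read over $\mathbb{C}$, the conjugate modes pairing up so that the resulting $v$ is real. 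Condition \eqref{spectre-cond} now enters as exactly the statement that the exponents $\{\Lambda_{n,i}\}_{n\ge1,\,i=1,2}$ are pairwise distinct: within a fixed branch they differ because the $\lambda_{\alpha,n}$ do, whereas $\Lambda_{n,1}=\Lambda_{l,2}$ reads $\kappa_\alpha^2(j_{\nu_\alpha,n}^2-j_{\nu_\alpha,l}^2)=\mu_2-\mu_1$, which is excluded by \eqref{spectre-cond} when $n\neq l$ and by $\mu_1\neq\mu_2$ when $n=l$. For the necessity of \eqref{spectre-cond}, if it failed for some $n\neq l$ then the identities indexed by $(n,1)$ and $(l,2)$ would share the kernel $e^{-\Lambda t}$ while, for $y_0$ supported on modes $n,l$ only, $\tau_{n,1}$ and $\tau_{l,2}$ can be prescribed independently, so no $v$ can meet both and \eqref{bound-sys} is not null controllable.

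\noindent For the converse, assuming \eqref{rank} and \eqref{spectre-cond}, the $\Lambda_{n,i}$ are pairwise distinct with $\mathrm{Re}\,\Lambda_{n,i}=\lambda_{\alpha,n}-\mathrm{Re}\,\mu_i\to+\infty$ and $\sum_{n,i}|\Lambda_{n,i}|^{-1}<\infty$, hence $\{e^{-\Lambda_{n,i}t}\}$ is minimal in $L^2(0,T)$. The key analytic ingredient, supplied by the biorthogonal-family results of Section~\ref{Section-prel}, is a biorthogonal family $\{q_{n,i}\}\subset L^2(0,T)$ with bounds, with $C,M>0$ independent of $T$, of the form $\|q_{n,i}\|_{L^2(0,T)}\le C\,e^{M/T}e^{C|\Lambda_{n,i}|^{1/2}}$, and in addition $e^{-\mathrm{Re}\,\Lambda_{n,i}T}\|q_{n,i}\|_{L^2(0,T)}\le C\,e^{M/T}e^{C|\Lambda_{n,i}|^{1/2}}$ for the finitely many indices with $\mathrm{Re}\,\Lambda_{n,i}\le0$. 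Setting $v:=\sum_{n,i}\tau_{n,i}q_{n,i}$ then solves the moment problem, and a direct estimate of $\|v\|_{L^2(0,T)}\le\sum_{n,i}|\tau_{n,i}|\,\|q_{n,i}\|_{L^2(0,T)}$ --- inserting $|\tau_{n,i}|\le|\theta_n|^{-1}|\beta_i|^{-1}e^{-\mathrm{Re}\,\Lambda_{n,i}T}|\langle y_0,\Phi_{\alpha,n}\Psi_i\rangle|$, using $|\theta_n|\asymp\lambda_{\alpha,n}^{1/2}$ and the eigenbasis characterization of $\|\cdot\|_{H_\alpha^{-1}(0,1)^2}$, Young's inequality to absorb $e^{C|\Lambda_{n,i}|^{1/2}}$ into $e^{C'/T}e^{-\lambda_{\alpha,n}T/2}$ for $n$ large, and the summability $\sum_n e^{-\lambda_{\alpha,n}T/2}\le Ce^{M'/T}$ coming from $\lambda_{\alpha,n}\sim\kappa_\alpha^2\pi^2 n^2$ --- yields $\|v\|_{L^2(0,T)}\le C_0\,e^{M/T}\|y_0\|_{H_\alpha^{-1}(0,1)^2}$ with $C_0,M$ independent of $T$. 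Then $\mathcal{C}_T^{(0)}y_0:=v$ is the desired bounded operator satisfying \eqref{OperatorC0bound}, and the associated transposition solution of \eqref{bound-sys} lies in $L^2(Q_T;\mathbb{R}^2)\cap C^0([0,T];H_\alpha^{-1}(0,1)^2)$ by the well-posedness theory of Section~\ref{Section-prel} and satisfies $y(\cdot,T)=0$ by construction. The hard part is exactly the quantitative construction of $\{q_{n,i}\}$: proving the displayed $L^2(0,T)$-estimates with constants independent of $T$ and uniform over the two interlaced --- and possibly complex-conjugate --- branches $\Lambda_{n,1},\Lambda_{n,2}$, which remain at the fixed distance $|\mu_1-\mu_2|$ apart; granted that, everything else reduces to routine estimates with the Bessel asymptotics.
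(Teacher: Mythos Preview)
Your proposal is correct and follows essentially the same route as the paper: reduce null controllability to a moment problem for the exponentials $\{e^{-\Lambda_{n,i}t}\}$, solve it with a biorthogonal family satisfying the bound \eqref{boundqni}, and estimate the resulting series for $v$ via Young's inequality and the Bessel asymptotics $\lambda_{\alpha,n}\sim\kappa_\alpha^2\pi^2 n^2$. Your preliminary diagonalization of $A$ (decoupling into two scalar degenerate equations sharing one control) is merely a repackaging of the paper's direct use of the vectorial eigenfunctions $\Psi_n^{(i)}=V_i\Phi_{\nu_\alpha,n}$ of $L^*$ (Proposition~\ref{propertieseigenfLL}); your $\Lambda_{n,i}=\lambda_{\alpha,n}-\mu_i$ are exactly the paper's $\lambda_{\nu_\alpha,n}^{(i)}$, and your $\Phi_{\alpha,n}\Psi_i$ coincide with $\Psi_n^{(i)}$ up to normalization.

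Two small technical points your outline glosses over but the paper makes explicit: (i) to place the sequence in the setting of Theorem~\ref{thm-biorth} (in particular to ensure $\mathrm{Re}\,\Lambda_n>0$), the paper shifts all exponents by $+\mu_2$ and absorbs the factor $e^{\mu_2 t}$ into the change of variable, rather than treating the finitely many non-positive indices separately as you suggest; (ii) Theorem~\ref{thm-biorth} only furnishes the biorthogonal family for $T\in(0,\tilde T_0)$, so for $T\ge\tilde T_0$ the paper takes the control on $(0,\tilde T_0/2)$ and extends it by zero --- your claim that the biorthogonal bounds hold for all $T>0$ with constants independent of $T$ is not quite what the cited theorem gives. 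Your discussion of the necessity of \eqref{rank} and \eqref{spectre-cond} is in fact more explicit than the paper's own proof, which concentrates on sufficiency and the cost estimate.
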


Before showing the controllability result for the nonlinear system \eqref{bound-nonlinear-sys}, we will first adapt an iterative method for the null-controllability in presence of source terms (see for instance \cite{Burgos_2020, Liu_2012}) to the following system
\begin{equation}\label{bound-nonhomog-sys}
\left\{
\begin{array}{lll}
y_{t} - ( x^{\alpha}  y_x )_x - A y = f ,  &  & \text{in} \;  Q_T, \\
y(t,1)= B v(t), & & \text{in} \; (0, \, T),  \\
\begin{cases}
\; y(t, 0) = 0,   \qquad   \, 0 \leq \alpha < 1 \\
\;   x^{\alpha} y_x  (t, 0)= 0 , \quad 1\leq \alpha < 2 \\
\end{cases}  & & t \in (0, T),\\
y(0,x)= y_{0}(x),  & & \text{in}  \;  (0, 1),
\end{array}
\right.
\end{equation}
where $y_{0} \in H_{\alpha}^{-1}(0,1)^2$ and $f$ is a given function satisfying appropriate assumptions. Specifically,  denoting by $\mathcal{F}, \mathcal{V}, \mathcal{H}_{0}$ and $\mathcal{H}$ the weighted spaces  defined in section \ref{Section-null}, one has:
\begin{theorem}\label{thm-operators}
 Suppose that \eqref{rank} and \eqref{spectre-cond} hold. Then, for every $T>0$, there exist two bounded linear operators
$$\begin{aligned} \mathcal{C}_{T}^{(1)}: H_{\alpha}^{-1}(0,1)^2 \times \mathcal{F}  \rightarrow \mathcal{V}& \quad \text { and } \quad E_{T}^{(0)}: & H_{\alpha}^{-1}(0,1)^2 \times  \mathcal{F}  \rightarrow   \mathcal{H}_{0}\\
(y_0,f)\rightarrow v&   &(y_0,f)\rightarrow y
\end{aligned}$$
and positive constants $C$ independent of $T$, possibly different, such that
\begin{enumerate}
\item[a)] $\left\|\mathcal{C}_{T}^{(1)}\right\|_{\mathcal{L}\left(H_{\alpha}^{-1}(0,1)^2 \times \mathcal{F}, \mathcal{V}\right)} \leq C e^{C\left(T+\frac{1}{T}\right)}$ and $\left\|E_{T}^{(0)}\right\|_{\mathcal{L}\left(H_{\alpha}^{-1}(0,1)^2 \times \mathcal{F}, \mathcal{H}_{0}\right)} \leq C e^{C\left(T+\frac{1}{T}\right)}$. 
\item[b)] The restriction $ E_{T}^{(1)}:=\left.E_{T}^{(0)}\right|_{H^{-1}_\alpha(0, 1) \times H_{\alpha}^{1}(0, 1) \times \mathcal{F}} \in \mathcal{L}\left(H^{-1}_\alpha(0, 1) \times H_{\alpha}^{1}(0, 1) \times \mathcal{F}, \mathcal{H} \right)$, \\and $\left\|E_{T}^{(1)}\right\|_{\mathcal{L}\left(H^{-1}_\alpha(0, 1) \times H_{\alpha}^{1}(0, 1) \times \mathcal{F}, \mathcal{H}\right)} \leq C e^{C\left(T+\frac{1}{T}\right)}$.
\item[c)] Fixed $\left(y_{0}, f\right) \in H_{\alpha}^{-1}(0,1)^2 \times \mathcal{F}$ (resp., $\left(y_{0}, f\right) \in H^{-1}_\alpha(0, 1) \times H_{\alpha}^{1}(0, 1) \times \mathcal{F}$), the solution of \eqref{bound-nonhomog-sys} associated to $\left(y_{0}, f\right)$ with  $v=\mathcal{C}_{T}^{(1)}\left(y_{0}, f\right)$
 is $$y=E_{T}^{(0)}\left(y_{0}, f\right) \in \mathcal{H}_{0}  \mbox{ (resp. }\; y=E_{T}^{(1)}\left(y_{0}, f\right) \in \mathcal{H}).$$
  \end{enumerate}
\end{theorem}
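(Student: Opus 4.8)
The plan is to run the Lebeau--Robbiano time-splitting (``fictitious control'') scheme of \cite{Burgos_2020, Liu_2012}, the only genuinely new analytic input being the boundary control-cost estimate \eqref{OperatorC0bound} of Theorem \ref{Thm-null-cont1} for the degenerate operator $y\mapsto(x^{\alpha}y_x)_x$. Fix $q>1$ and set $T_0=0$, $T_k=T-Tq^{-k}$, $I_k=(T_k,T_{k+1})$ and $\delta_k=T_{k+1}-T_k=T(q-1)q^{-(k+1)}$, so that $T_k\nearrow T$ and $\sum_{k\ge 0}\delta_k=T$. Recall that the weighted spaces $\mathcal F,\mathcal V,\mathcal H_0,\mathcal H$ of Section \ref{Section-null} are built from weight functions that are constant on each slab $I_k$, blow up as $t\to T^-$, and are chosen so that, on $I_k$, the ratios of their consecutive values dominate the slab control cost $C_0 e^{2M/\delta_k}$ coming from \eqref{OperatorC0bound} together with the well-posedness constants of \eqref{bound-sys}; this matching is the sole reason for that particular choice of weights.

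On each slab I would steer in two steps. Let $\widehat T_k=T_k+\delta_k/2$ and let $q_k$ be the state at time $T_k$, with $q_0=y_0$. On $[T_k,\widehat T_k]$: keep the source switched on but take as control, on this sub-slab, the homogeneous null control $v_k$ produced by (a time-shifted copy of) the operator $\mathcal C^{(0)}_{\delta_k/2}$ of Theorem \ref{Thm-null-cont1} which drives $q_k$ to $0$ at time $\widehat T_k$ \emph{for the homogeneous system} \eqref{bound-sys}; by linearity, the solution of \eqref{bound-nonhomog-sys} on $[T_k,\widehat T_k]$ with datum $q_k$, source $f$ and this control reaches at $\widehat T_k$ the state $s_k:=$ the uncontrolled evolution of the zero datum under $f|_{[T_k,\widehat T_k]}$. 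On $[\widehat T_k,T_{k+1}]$: switch the control off and let $y$ evolve under \eqref{bound-nonhomog-sys} from $s_k$ with source $f|_{[\widehat T_k,T_{k+1}]}$, calling $q_{k+1}$ the state reached at $T_{k+1}$. Because the control on the left sub-slab kills $q_k$, the state $q_{k+1}$ depends only on $f|_{I_k}$, and the pieces glue continuously at the nodes. One then defines $\mathcal C^{(1)}_T(y_0,f)$ to be the control equal to $v_k$ on $[T_k,\widehat T_k]$ and to $0$ on $(\widehat T_k,T_{k+1}]$, and $E^{(0)}_T(y_0,f)$ to be the function on $(0,T)$ coinciding on each $I_k$ with the solution built above; both maps are linear in $(y_0,f)$.

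The estimates then rest on the degenerate-operator counterparts, with constants bounded for bounded slab length, of: (i) the cost estimate \eqref{OperatorC0bound} on $[T_k,\widehat T_k]$, giving $\|v_k\|_{L^2(T_k,\widehat T_k)}\le C_0 e^{2M/\delta_k}\|q_k\|_{H_\alpha^{-1}(0,1)^2}$ and, by well-posedness of \eqref{bound-sys} with $L^2$ boundary data, a like bound for $\|y\|_{L^\infty([T_k,\widehat T_k];H_\alpha^{-1}(0,1)^2)\cap L^2((T_k,\widehat T_k);L^2(0,1)^2)}$; (ii) the energy estimate for the uncontrolled inhomogeneous system, yielding $\|y\|_{L^\infty(I_k;H_\alpha^{-1}(0,1)^2)\cap L^2(I_k;L^2(0,1)^2)}+\|s_k\|_{H_\alpha^{-1}(0,1)^2}+\|q_{k+1}\|_{H_\alpha^{-1}(0,1)^2}\le C\|f\|_{L^2(I_k;H_\alpha^{-1}(0,1)^2)}$; and (iii) the induction $\|q_k\|_{H_\alpha^{-1}(0,1)^2}\le C\|f\|_{L^2(I_{k-1};H_\alpha^{-1}(0,1)^2)}$ for $k\ge 1$ (with $q_0=y_0$ and the convention $I_{-1}=\emptyset$). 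Combining these, the $L^2(I_k)$-norm of $v_k$ and the local norms of $y$ on $I_k$ are bounded by $C_0 e^{2M/\delta_k}$ times $\|f\|_{L^2(I_{k-1}\cup I_k;H_\alpha^{-1}(0,1)^2)}$, plus an extra $\|y_0\|_{H_\alpha^{-1}(0,1)^2}$ for $k=0$. Multiplying by the weights of $\mathcal V$ and $\mathcal H_0$ and summing over $k$, the defining series converge because the weight of $\mathcal F$ on $I_{k-1}$ has been chosen to dominate $C_0 e^{2M/\delta_k}$ times the weights of $\mathcal V$ and $\mathcal H_0$ on $I_k$ with geometrically small margin; tracking the dependence on $T$ of $\delta_k$, of the constant $M$ in \eqref{OperatorC0bound}, and of the Gronwall constants over subintervals of $[0,T]$ gives $\|\mathcal C^{(1)}_T(y_0,f)\|_{\mathcal V}+\|E^{(0)}_T(y_0,f)\|_{\mathcal H_0}\le C e^{C(T+1/T)}(\|y_0\|_{H_\alpha^{-1}(0,1)^2}+\|f\|_{\mathcal F})$. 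Since the tail of the same series tends to $0$, $y(t)\to 0$ in $H_\alpha^{-1}(0,1)^2$ as $t\to T^-$, so $y\in\mathcal H_0$ with $y(T)=0$; this proves a) and c) for $E^{(0)}_T$.

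For b), take $(y_0,f)\in H_\alpha^{-1}(0,1)\times H_\alpha^{1}(0,1)\times\mathcal F$. Every slab $I_k$ with $k\ge 1$ starts from $q_k$, which is a source-accumulation over $I_{k-1}$ rather than raw data and is therefore as regular as the parabolic smoothing of the degenerate operator permits; only the first slab $I_0$ starts from data of limited regularity, and there the extra $H_\alpha^{1}$-regularity of the second component, together with that smoothing, supplies the bounds defining $\mathcal H$. Re-running the summation with the stronger weights attached to $\mathcal H$ gives $E^{(1)}_T=E^{(0)}_T|_{H_\alpha^{-1}(0,1)\times H_\alpha^{1}(0,1)\times\mathcal F}\in\mathcal L(\cdot,\mathcal H)$ with an estimate of the same exponential type, and c) for $E^{(1)}_T$ is again read off the construction. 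I expect the real difficulty to be not any individual step but the calibration carried out in Section \ref{Section-null}: choosing the weights so that the blow-up $e^{2M/\delta_k}$ of the slab control costs is absorbed slab-by-slab while the accumulated constant comes out exactly of the form $C e^{C(T+1/T)}$; the one place where the degeneracy must genuinely be watched is the well-posedness and transposition theory for \eqref{bound-sys} and \eqref{bound-nonhomog-sys} (existence, uniqueness, and $C^0$-in-time continuity with values in $H_\alpha^{-1}(0,1)^2$ for $L^2$ boundary data and $L^2(0,T;H_\alpha^{-1}(0,1)^2)$ sources), with constants uniform over short time intervals.
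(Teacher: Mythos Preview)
Your overall strategy is correct and matches the paper's: the same Liu--Takahashi--Tucsnak time-splitting with $T_k=T-Tq^{-k}$, the same recursion showing the state at each node depends only on $f$ restricted to the previous slab, and the same weighted summation to close the estimates. The principal difference is in how each slab is handled. You split $I_k$ into a control half $[T_k,\widehat T_k]$ and a passive half $[\widehat T_k,T_{k+1}]$; the paper instead uses linear superposition on the \emph{full} slab, writing $Y_k=\tilde y_k+\hat y_k$ where $\tilde y_k$ solves the system on $I_k$ with source $f$, zero data and zero control, while $\hat y_k$ solves the homogeneous system with data $a_k$ and control $v_k=\mathcal C^{(0)}_{T_{k+1}-T_k}(a_k)$ that steers it to zero at $T_{k+1}$. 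Your passive half-slab is unnecessary here (there is no high-frequency damping to wait for, unlike in the classical Lebeau--Robbiano argument), and it doubles the exponent in the slab cost from $M/\delta_k$ to $2M/\delta_k$, but this is harmless for the final $Ce^{C(T+1/T)}$ bound. The paper's superposition is slightly cleaner and avoids introducing $\widehat T_k$.

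One point to correct: the weights $\rho_{\mathcal F},\rho_0$ of Section \ref{Section-null} are \emph{not} piecewise constant on the slabs as you state; they are continuous, non-increasing, constant on $[0,T(1-1/q^2)]$ and then given by explicit exponentials in $1/(T-t)$. What is actually used is monotonicity together with the exact identity $\rho_0(T_{k+2})=\rho_{\mathcal F}(T_k)\,e^{M/(T_{k+2}-T_{k+1})}$, which plays the role of the ``ratio dominates slab cost'' relation you describe. Your summary of the calibration is morally right, but when you carry it out you will need this precise relation rather than a piecewise-constant picture.
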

As mentioned earlier, the proof of this result is based on an iterative method. We employ an idea that originates from Lebeau and Robbiano \cite{LR1995}, which consists of controlling a part of the state trajectory and utilizing the dissipative nature of the parabolic equations to drive the remaining part to zero. See also \cite{TenTucs}. Finally, we apply a fixed-point method introduced in \cite{Imanuvilov}.

The previous results enable us to obtain a local exact controllability result for the nonlinear system \eqref{bound-nonlinear-sys}. Specifically, we will prove:
\begin{theorem}\label{thm-local-exact-control}
Suppose conditions \eqref{rank} and \eqref{spectre-cond} hold and that the source term $f$ satisfies 
\begin{align}\label{nonlinCondit1}
\begin{cases}
\mid f_i(y_1,y_2) - f_i(z_1,z_2) \mid \leq C \| y_2^2 - z_2^2 \|,\, i=1,2 \\[.1cm]
f_i(y_1,0)=0,\, i=1,2
\end{cases}
\end{align} 
Then, fixed $T>0$, there exists $\delta>0$ such that if $\left(y^{0}_{1}, y^0_{2}\right) \in$ $H^{-1}_\alpha(0, 1) \times H_{\alpha,0}^{1}(0, 1)$ fulfills
\begin{equation}\label{estim-1.15}
\left\|y^{0}_{1}\right\|_{H^{-1}_\alpha}+\left\|y^0_{2}\right\|_{H_{\alpha,0}^{1}} \leq \delta,
\end{equation}
there exists $v \in L^{2}(0, T)$ for which  \eqref{bound-nonlinear-sys} has a unique solution
\begin{align*}
y=(y_1, y_2) \in\left[L^{2}\left(Q_{T}\right) \cap C^{0}\left([0, T] ; H^{-1}_{\alpha}(0,1)^2\right)\right] \times C^{0}\left(\overline{Q}_{T}\right)
\end{align*}
which satisfies $y(T,.)=0$. 
\end{theorem}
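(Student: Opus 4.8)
The plan is to recast the nonlinear controllability problem as a fixed point for a suitable map built from the operators $\mathcal{C}_T^{(1)}$ and $E_T^{(1)}$ furnished by Theorem \ref{thm-operators}. Given $(y_1^0,y_2^0)\in H^{-1}_\alpha(0,1)\times H^1_{\alpha,0}(0,1)$, I would define, for $z$ in a suitable closed ball of the space $\mathcal{H}$ (which, by the stated mapping properties, embeds in particular into $C^0(\overline{Q}_T)$ for the second component), the source term $f(z)=(f_1(z_1,z_2),f_2(z_1,z_2))$, and set $\Lambda(z)=E_T^{(1)}(y_1^0,y_2^0,f(z))$ with associated control $v=\mathcal{C}_T^{(1)}(y_1^0,y_2^0,f(z))$. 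A fixed point $y=\Lambda(y)$ is then exactly a solution of \eqref{bound-nonlinear-sys} with $y(T,\cdot)=0$, since part c) of Theorem \ref{thm-operators} identifies $E_T^{(1)}(y_0,f)$ as the controlled-to-zero solution of \eqref{bound-nonhomog-sys} with right-hand side $f$.

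The key steps are: (i) verify that $f$ maps the ball $\overline{B}_{\mathcal{H}}(0,R)$ into $\mathcal{F}$ with the quantitative bound $\|f(z)\|_{\mathcal{F}}\le C R^2$ — this uses the structural hypotheses \eqref{nonlinCondit1}, namely $f_i(y_1,0)=0$ together with the Lipschitz-in-$y_2^2$ estimate, which give $|f_i(z_1,z_2)|=|f_i(z_1,z_2)-f_i(z_1,0)|\le C\|z_2^2\|$, so that $f(z)$ is controlled by $\|z_2\|^2$ in the relevant weighted norm, with the weights in $\mathcal{F}$ and $\mathcal{H}$ chosen precisely to make this work; (ii) combine this with the operator bound $\|E_T^{(1)}\|\le Ce^{C(T+1/T)}$ from part b) to obtain $\|\Lambda(z)\|_{\mathcal{H}}\le Ce^{C(T+1/T)}\big(\|y_1^0\|_{H^{-1}_\alpha}+\|y_2^0\|_{H^1_{\alpha,0}}+R^2\big)$; (iii) fix $T>0$, then choose $R$ small and $\delta$ small (both depending on $T$ through the constant $Ce^{C(T+1/T)}$) so that $Ce^{C(T+1/T)}(\delta+R^2)\le R$, which makes $\Lambda$ map $\overline{B}_{\mathcal{H}}(0,R)$ into itself; (iv) establish that $\Lambda$ is a contraction on this ball, using the Lipschitz bound $\|f(z)-f(\tilde z)\|_{\mathcal{F}}\le C\|z_2^2-\tilde z_2^2\|\le C(\|z_2\|+\|\tilde z_2\|)\|z_2-\tilde z\|\le CR\|z-\tilde z\|_{\mathcal{H}}$, composed once more with $\|E_T^{(1)}\|$, and shrinking $R$ further if necessary so that $Ce^{C(T+1/T)}R<1$; (v) invoke the Banach fixed point theorem to get a unique fixed point $y$ in the ball, and read off the claimed regularity $y\in[L^2(Q_T)\cap C^0([0,T];H^{-1}_\alpha(0,1)^2)]\times C^0(\overline{Q}_T)$ from the definition of $\mathcal{H}$ and $\mathcal{H}_0$; uniqueness of the solution of \eqref{bound-nonlinear-sys} (not merely within the ball) would follow from a standard Gronwall argument using the Lipschitz structure \eqref{nonlinCondit1} and well-posedness of the associated Cauchy problem.

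The main obstacle I anticipate is step (i): showing that the nonlinearity indeed lands in $\mathcal{F}$ with a genuinely superlinear ($R^2$) bound. This requires that the time- and space-weights defining $\mathcal{F}$ be compatible with the weights defining $\mathcal{H}$ in such a way that multiplication by the (weighted-bounded, via the embedding $\mathcal{H}\hookrightarrow C^0(\overline Q_T)$ in the second slot) factor $z_2$ produces an element with the correct weight growth near $t=0$ and $t=T$ — in the Lebeau–Robbiano/Liu framework these weights blow up at $t=T$, and one must check that the quadratic term $z_2^2$ does not destroy integrability against them. A careful bookkeeping of how $f(z)$ inherits its weighted norm from $z$, using $f_i(y_1,0)=0$ to gain one full power of $z_2$ and the Lipschitz hypothesis to gain the second, is the crux; everything else is the routine Banach fixed point machinery together with the already-established linear estimates. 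One should also double-check that the hypothesis $\alpha\in[0,2)$ and the ranges split at $\alpha=1$ enter only through Theorem \ref{thm-operators} and the function-space definitions, so no new degeneracy-specific difficulty arises at the nonlinear stage.
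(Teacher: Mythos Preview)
Your proposal is correct and follows essentially the same route as the paper: both build a Banach fixed-point argument on top of the operators $E_T^{(1)}$ and $\mathcal{C}_T^{(1)}$ from Theorem~\ref{thm-operators}, and both hinge on the weight compatibility that $\rho_0^2/\rho_{\mathcal{F}}$ is bounded on $[0,T]$ (secured in the paper by choosing $p,q>1$ with $q^2\in(1,\tfrac{2p}{p+1})$), which is exactly the ``crux'' you anticipate in step~(i). The only organizational difference is that the paper iterates in the source space $\mathcal{F}$ via $\mathcal{O}(f)=f\big(E_T^{(1)}(y_0,f)\big)$, whereas you iterate in the state space $\mathcal{H}$ via $\Lambda(z)=E_T^{(1)}(y_0,f(z))$; these are the two natural parametrizations of the same fixed-point problem and the estimates required are identical.
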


\subsection*{Plan of the paper}
The rest of the paper is organized as follows. In Section \ref{Section-prel}, we prove the well-posedness of system \eqref{bound-nonhomog-sys}, and consequently that of system \eqref{bound-sys}, in appropriate weighted spaces using the transposition method. In Section \ref{Section-eigenvalue}, we discuss the spectral analysis related to scalar degenerate operators and provide a description of the spectrum associated with system \eqref{bound-sys}, which will be useful for developing the moment method. Section \ref{Section-approx} is devoted to studying the boundary null controllability problem for the linear system \eqref{bound-sys} and establishing an estimate of the control cost. Using these results, we will prove in Section \ref{Section-null} the controllability result for the nonlinear system \eqref{bound-nonlinear-sys} via a fixed-point strategy. The paper concludes with the Appendix, where we provide some proofs to make the paper self-contained.

\section{Preliminary results}\label{Section-prel}
\subsection{Functional framework}
Let us start introducing the functional setting associated with degenerate operators. We distinguish here between two cases: \(0 \leq \alpha < 1\) and \(1 \leq \alpha < 2\), primarily for reasons of well-posedness. In the natural functional setting for weakly degenerate operators, specifically when \(0 \leq \alpha < 1\), the trace at \(x = 0\) exists which allows to consider a Dirichlet condition at $x=0$. On the other hand, the trace does not exist when $\alpha \geq 1$. Here the Dirichlet boundary condition needs to be changed by some Neumann-kind one. This leads us to consider, as in \cite{Alabau2006, CMV2008, CanTer2009}, the following weighted Hilbert spaces:

\paragraph*{} For the Weakly degenerate case (WD) $0\leq \alpha<1$, we define:
\begin{align*}
\bullet &H_{\alpha}^1(0,1):= \Big\{ u \in L^2(0,1): u\, \text{absolutely continuous in}\, [0,1], x^{\alpha/2}u_x \in L^2(0,1)\,\text{and}\, u(1)=u(0)=0 \Big\}\\[.1cm]
\bullet  &H_{\alpha}^2(0,1):= \Big\{ u \in H_{\alpha}^1(0, 1):  x^{\alpha} u_x \in H^1(0,1)\Big\}.
\end{align*}

\paragraph*{} For the Strongly degenerate case (SD) $1\leq \alpha<2$, we define
\begin{align*}
\bullet  H_{\alpha}^1(0,1):=& \Big\{ u \in L^2(0,1): u\, \text{locally absolutely continuous in}\, (0,1], x^{\alpha/2}u_x \in L^2(0,1)\,\text{and}\, u(1)=0 \Big\}\\[.1cm]
\bullet  H_{\alpha}^2(0,1):=& \Big\{ u \in H_{\alpha}^1(0, 1):  x^{\alpha} u_x \in H^1(0,1)\Big\}\\
=&\Big\{ u \in L^2(0,1): u\, \text{locally absolutely continuous in}\, (0,1], x^{\alpha} u\in H^1_0(0,1), x^{\alpha} u_x \in H^1(0,1)\\
\quad&   \,\text{and}\, ( x^{\alpha} u_x)(0)=0 \Big\}.
\end{align*}
In both cases, the norms are defined as follows
\begin{align*}
\| u \|_{H_{\alpha}^1}^2:=  \| u \|_{L^2(0,1)}^2 +  \|  x^{\alpha/2} u_x \|_{L^2(0,1)}^2,\qquad
\| u \|_{H_{\alpha}^2}^2:=  \| u \|_{H_{\alpha}^1}^2 +  \| ( x^{\alpha} u_x )_x \|_{L^2(0,1)}^2.
\end{align*}
Let $H_{\alpha}^{-1}(0,1)$ be the dual space of $H_{\alpha}^1(0,1)$ with respect to the pivot space
$L^2(0,1)$, endowed with the natural norm
\begin{equation*}
\| z \|_{H_{\alpha}^{-1}}:=  \sup_{\| u \|_{H_{\alpha}^{1}}=1} \langle z, u \rangle_{H_{\alpha}^{-1}, H_{\alpha}^{1}}.
\end{equation*}
In what follows, for simplicity, we will always denote
by $\langle \cdot,\cdot \rangle$ the standard scalar product of either $L^2(0,1)$ or $L^2(0,1)^2$, by $\langle \cdot,\cdot \rangle_{X',X}$ the duality
pairing between the Hilbert space $X$ and its dual $X'$.  On the other hand, we will use $\|\cdot \|_{H_{\alpha}^1}$ (resp. $\|\cdot \|_{H_{\alpha}^{-1}}$) for denoting the norm of $H_{\alpha}^1(0,1)^2$ (resp. $H_{\alpha}^{-1}(0,1)^2)$.

\subsection{Well-posedness}\label{section: Well Posedness}
In this section, we present results related to the existence, uniqueness, and continuous dependence on the data for the linear problem \eqref{bound-nonhomog-sys}.

Before going any further, note that system \eqref{bound-sys} is a linearization of system \eqref{bound-nonhomog-sys} around the equilibrium point \( (0, 0) \). In particular, \eqref{bound-sys} can also be written as \eqref{bound-nonhomog-sys} with \( f = (0, 0) \). To this end, we will first address the well-posedness of system \eqref{bound-nonhomog-sys}, and from this, the well-posedness of system \eqref{bound-sys} will follow. For this purpose, let us consider the linear backward-in-time problem:
\begin{equation}\label{adjoint system}
\left\{\begin{array}{lll}
-\varphi_{t}-( x^{\alpha} \varphi_x )_x =A^{*}\varphi + g & & \text { in } Q_{T} \\
\varphi(t, 1)=0 & & t \in (0, T)\\
\left\{\begin{array}{l}
\varphi(t, 0)=0, \quad 0 \leq \alpha<1 \\
x^{\alpha} \varphi_{x}(t, 0)=0, \quad 1 \leq \alpha<2
\end{array}\right. & & \text { on }(0, T) \\
\varphi(T,\cdot)=\varphi_{0} & & \text { in }(0, 1)
\end{array}\right.
\end{equation}
where $A$ is given in \eqref{Vec-sys-matrix} and $\varphi_{0}$ and $g$ are functions in appropriate spaces.

Let us start with a first result on the existence and uniqueness of strong solutions to system \eqref{adjoint system}. 
which is by now classical (see, for instance \cite[Theorem 2.1]{CMV2008}).  One has:
\begin{proposition}\label{prop.2.1} Let us assume that $\varphi_{0} \in H_{\alpha}^{1}\left(0, 1 \right)^{2}$  and $g \in L^{2}\left(Q_{T} ; \mathbb{R}^{2}\right) $. Then, system \eqref{adjoint system} has a unique strong solution 
$$
\varphi \in L^{2}\left(0, T ; H^{2}_\alpha\left(0, 1\right)^{2} \right) \cap  C^{0}\left([0, T] ; H^{1}_{\alpha}\left(0, 1\right)^2\right) 
$$
In addition, there exists a positive constant $C$ such that
\begin{equation}\label{bound phi prop 2.1}
\|\varphi\|_{L^2\left(0, T ; H_\alpha^2(0,1)^2\right)}+\|\varphi\|_{C^0\left([0, T] ; H_\alpha^1(0,1)^2\right)} \leq C\left(\left\|\varphi_0\right\|_{H_\alpha^1}+\|g\|_{L^2(L^2)}\right)
\end{equation}
\end{proposition}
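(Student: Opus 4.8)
The plan is to reduce the backward problem to a forward one by reversing time, to read the resulting system as an abstract parabolic Cauchy problem on $L^2(0,1)^2$ governed by an analytic semigroup, and then to transfer to the $2\times2$ setting the maximal parabolic regularity for the \emph{scalar} degenerate operator recorded in \cite[Theorem 2.1]{CMV2008}, treating the zeroth order coupling $A^*\varphi$ as a bounded perturbation.

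First I would set $\psi(t,\cdot):=\varphi(T-t,\cdot)$ and $\tilde g(t,\cdot):=g(T-t,\cdot)$, so that $\psi$ solves
$$
\psi_t-(x^\alpha\psi_x)_x=A^*\psi+\tilde g\ \text{ in }Q_T,\qquad \psi(0,\cdot)=\varphi_0,
$$
with the very same boundary conditions at $x=0$ and $x=1$; since $t\mapsto T-t$ is an isometry of all the spaces appearing in the statement, it is enough to prove the proposition for $\psi$. Next I would recall from \cite{Alabau2006, CMV2008, CanTer2009} that the scalar operator $\mathcal{A}u:=(x^\alpha u_x)_x$ with domain $D(\mathcal{A})=H_\alpha^2(0,1)$ is self-adjoint, nonpositive and has compact resolvent in $L^2(0,1)$ --- hence generates an analytic semigroup --- and that $D\bigl((-\mathcal{A})^{1/2}\bigr)=H_\alpha^1(0,1)$ with equivalent norms. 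Here the condition $(x^\alpha u_x)(0)=0$ built into $D(\mathcal{A})$ in the strongly degenerate regime (resp. the Dirichlet condition when $0\le\alpha<1$) is exactly what makes $\langle-\mathcal{A}u,u\rangle=\int_0^1 x^\alpha|u_x|^2\,dx$ free of boundary contributions.

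I would then work on $X:=L^2(0,1)^2$ with the operator $\mathbb{A}:=\operatorname{diag}(\mathcal{A},\mathcal{A})+A^*$, $D(\mathbb{A})=H_\alpha^2(0,1)^2$. Because $\operatorname{diag}(\mathcal{A},\mathcal{A})$ generates an analytic semigroup on $X$ and $A^*\in\mathcal{L}(\mathbb{R}^2)\subset\mathcal{L}(X)$ is bounded, $\mathbb{A}$ generates an analytic semigroup as well, and $D\bigl((-\operatorname{diag}(\mathcal{A},\mathcal{A}))^{1/2}\bigr)=H_\alpha^1(0,1)^2$. The forward system becomes $\psi'=\mathbb{A}\psi+\tilde g$, $\psi(0)=\varphi_0$; for $\varphi_0\in H_\alpha^1(0,1)^2$ (the trace space $[X,D(\mathbb{A})]_{1/2}$) and $\tilde g\in L^2(0,T;X)$, the classical maximal $L^2$-regularity for analytic semigroups, followed by interpolation, produces a unique $\psi\in L^2(0,T;H_\alpha^2(0,1)^2)\cap H^1(0,T;X)\cap C^0([0,T];H_\alpha^1(0,1)^2)$ solving the system, with $\|\psi\|_{L^2(0,T;H_\alpha^2)}+\|\psi\|_{C^0([0,T];H_\alpha^1)}\le C\bigl(\|\varphi_0\|_{H_\alpha^1}+\|\tilde g\|_{L^2(L^2)}\bigr)$. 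If one prefers a self-contained derivation (equivalently, the variational Lions-type argument for the time-independent coercive form $a(u,w)=\sum_{i=1}^2\int_0^1 x^\alpha (u_i)_x(w_i)_x\,dx-\langle A^*u,w\rangle$), the same estimate follows from a Galerkin scheme in the eigenbasis of $\mathcal{A}$: testing the truncated equation with $-\mathcal{A}\psi^N$ gives $\tfrac12\frac{d}{dt}\|\psi^N\|_{H_\alpha^1}^2+\|\mathcal{A}\psi^N\|_{L^2}^2\le\|\tilde g\|_{L^2}\|\mathcal{A}\psi^N\|_{L^2}+\|A^*\|\,\|\psi^N\|_{H_\alpha^1}\|\mathcal{A}\psi^N\|_{L^2}$, and Young's inequality together with Gronwall's lemma close it uniformly in $N$ before passing to the limit. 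Undoing the time reversal then yields the statement and the bound \eqref{bound phi prop 2.1}.

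The only genuinely delicate point is the parabolic smoothing in the degenerate weighted scale --- that $H_\alpha^1$ initial data and an $L^2$ source force the solution to gain full $L^2(0,T;H_\alpha^2)$ regularity despite the diffusion coefficient vanishing at $x=0$ --- but this is precisely the content of the scalar result in \cite{CMV2008}, and upgrading it to the $2\times2$ system costs nothing more than absorbing the bounded coupling $A^*\psi$ via Young's inequality and Gronwall's lemma; the two types of boundary condition at $x=0$ are handled exactly as in the scalar theory and introduce no extra boundary terms.
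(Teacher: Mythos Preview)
Your proposal is correct. The paper does not actually supply its own proof of this proposition: it states the result as ``by now classical'' and cites \cite[Theorem 2.1]{CMV2008} for the scalar case, leaving the (routine) passage to the $2\times 2$ system implicit. Your write-up is precisely a fleshed-out version of that implicit step --- time reversal, the analytic-semigroup/maximal-regularity framework for the scalar degenerate operator from \cite{Alabau2006, CMV2008}, and absorption of the bounded coupling $A^*$ via Young/Gronwall --- so there is no methodological divergence to discuss.
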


In view of Proposition \ref{prop.2.1}, we can define solution by transposition to system  \eqref{bound-nonhomog-sys}. Taking into account that the notation $\displaystyle y \cdot g$ represents the scalar product in $\mathbb{R}^2$.
\begin{definition}
Let $y_{0} \in H^{-1}_{\alpha}\left(0, 1\right)^{2}, v \in L^{2}(0, T)$ and $f \in L^{2}\left(Q_{T} ; \mathbb{R}^{2}\right)$ be given. It will be said that $y \in L^{2}\left(Q_{T} ; \mathbb{R}^{2}\right)$ is a solution by transposition to \eqref{bound-nonhomog-sys} if, for each $g \in L^{2}\left(Q_{T} ; \mathbb{R}^{2}\right)$, one has
\begin{equation}\label{duality identity}
\iint_{Q_{T}} y \cdot g\,  d x d t=\left\langle y_{0}, \varphi(\cdot, 0)\right\rangle-\int_{0}^{T} B^{*} \left(x^\alpha \varphi_{x}\right)(1, t) v(t) d t+\iint_{Q_{T}} f \cdot \varphi d x d t
\end{equation}
 where $\varphi \in L^{2}\left(0, T ; H^{2}_\alpha\left(0, 1\right)^{2} \right) \cap  C^{0}\left([0, T] ; H^{1}_{\alpha}\left(0, 1\right)^2\right)$ is the solution of \eqref{adjoint system} associated to $g$ and $\varphi_{0}=(0,0)$ (recall that $\langle\cdot, \cdot\rangle$ stands for the usual duality pairing between $H_{\alpha}^{-1}(0,1)^2$ and $\left.H_{\alpha}^1(0,1)^2\right)$.
\end{definition}
With this definition we have:
\begin{proposition}\label{well.posed.transp}
Let us assume that $y_{0}= \left(y^{0}_{1}, y^0_{2}\right) \in H^{-1}_{\alpha}(0,1)^2, v \in L^{2}(0, T)$ and $f \in$ $L^{2}\left(Q_{T} ; \mathbb{R}^{2}\right)$. Then, system \eqref{bound-nonhomog-sys} admits a unique solution by transposition $y=\left(y_{1}, y_{2}\right)$ that satisfies
$$
\left\{\begin{array}{l}
y \in L^{2}\left(Q_{T} ; \mathbb{R}^{2}\right) \cap C^{0}\left([0, T] ; H_{\alpha}^{-1}(0,1)^2\right), \quad y_{t} \in L^{2}\left(0, T ; \left(H^{2}_{\alpha}(0,1)^2 \right)^{\prime}\right), \\
y_{t}- \left(x^\alpha y_{x}\right)_x-A y=f \text { in } L^{2}\left(0, T ;\left(H^{2}_{\alpha}(0,1)^2 \right)^{\prime}\right), \\
y(\cdot, 0)=y_{0} \text { in } H^{-1}_{\alpha}\left(0, 1 \right)^{2}
\end{array}\right.
$$
and
\begin{equation}\label{bounds.prop.2.2}
\|y\|_{L^{2}\left(L^{2}\right)}+\|y\|_{C^{0}\left(H^{-1}_{\alpha}\right)}+\left\|y_{t}\right\|_{L^{2}\left(\left(H^{2}_{\alpha}(0,1)^2 \right)^{\prime}\right)} \leq C e^{C T}\left(\left\|y_{0}\right\|_{H^{-1}_{\alpha}}+\|v\|_{L^{2}(0, T)}+\|f\|_{L^{2}\left(L^{2}\right)}\right),
\end{equation}
 for a constant $C = C(T) > 0$. Moreover
\begin{enumerate}
\item If $y_{2}^{0} \in L^{2}(0, 1)$, then $y_{2} \in L^{2}\left(0, T ; H_{\alpha}^{1}(0, 1)\right) \cap C^{0}\left([0, T] ; L^{2}(0, 1)\right)$ and, for a new constant $C>0$, one has
\begin{equation}\label{bounds.prop.2.2 a}
\|y_{2}\|_{L^{2}\left(H_{\alpha}^{1}\right)}+\|y_{2}\|_{C^{0}\left(L^{2}\right)} \leq C\left(\|y\|_{L^{2}\left(L^{2}\right)}+\left\|y^0_{2}\right\|_{L^{2}}+\|f\|_{L^{2}\left(L^{2}\right)}\right) .
\end{equation}
\item If $y^0_{2} \in H_{\alpha}^{1}(0, 1)$, then $y_{2} \in L^{2}\left(0, T ; H^{2}_{\alpha}(0, 1) \right) \cap C^{0}\left([0, T] ; H_{\alpha}^{1}(0, 1)\right)$ and, for a new constant $C>0$, one has
\begin{equation}\label{bounds.prop.2.2 b}
\|y_{2}\|_{L^{2}\left(H^{2}_{\alpha} \right)}+\|y_{2}\|_{C^{0}\left(H_{\alpha}^{1}\right)} \leq C\left(\|y\|_{L^{2}\left(L^{2}\right)}+\left\|y^0_{2}\right\|_{H_{\alpha}^{1}}+\|f\|_{L^{2}\left(L^{2}\right)}\right),
\end{equation}
and, in particular, $y=(y_1, y_2) \in L^{2}\left(Q_{T}\right) \times C^{0}\left(\overline{Q}_{T}\right)$.
 \end{enumerate}
\end{proposition}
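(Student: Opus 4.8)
The plan is to use the transposition method, which turns the existence of $y$ into a Riesz representation for a linear form built from the adjoint system \eqref{adjoint system}, so that the whole proof reduces to a priori estimates for that adjoint. \emph{Step 1 (set-up).} For $g\in L^{2}(Q_{T};\mathbb R^{2})$ let $\varphi=\varphi_g$ be the solution of \eqref{adjoint system} with $\varphi_0=(0,0)$, which exists and obeys \eqref{bound phi prop 2.1} by Proposition~\ref{prop.2.1}. Define
\[
\Lambda(g):=\big\langle y_0,\varphi(\cdot,0)\big\rangle-\int_0^T B^{*}\big(x^\alpha\varphi_x\big)(1,t)\,v(t)\,dt+\iint_{Q_T} f\cdot\varphi\,dx\,dt .
\]
By the very definition of a solution by transposition, $y$ is precisely the element of $L^{2}(Q_T;\mathbb R^{2})$ representing $\Lambda$ in the $L^{2}$ pairing; uniqueness is then immediate, and existence together with the bound on $\|y\|_{L^{2}(L^{2})}$ follows from the Riesz theorem once $\Lambda$ is shown to be continuous on $L^{2}(Q_T;\mathbb R^{2})$ with norm $\le Ce^{CT}\big(\|y_0\|_{H^{-1}_\alpha}+\|v\|_{L^{2}(0,T)}+\|f\|_{L^{2}(L^{2})}\big)$.

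\emph{Step 2 (continuity of $\Lambda$ — the main point).} The first and third terms are harmless: by Proposition~\ref{prop.2.1} with $\varphi_0=0$ one has $\|\varphi(\cdot,0)\|_{H^{1}_\alpha}+\|\varphi\|_{L^{2}(H^{2}_\alpha)}\le C\|g\|_{L^{2}(L^{2})}$, so Cauchy--Schwarz suffices. The nontrivial ingredient is the \emph{hidden regularity} of the normal trace at the non-degenerate endpoint,
\[
\int_0^T\big|(x^\alpha\varphi_x)(1,t)\big|^{2}\,dt\le C\,\|g\|^{2}_{L^{2}(L^{2})} .
\]
I would obtain it componentwise by a multiplier argument: multiply the $i$-th equation $-\partial_t\varphi_i-(x^\alpha\partial_x\varphi_i)_x-(A^{*}\varphi)_i=g_i$ by $p(x)\,\partial_x\varphi_i$, with $p$ smooth, $p(1)=1$ and $p$ vanishing near $x=0$, and integrate over $Q_T$ by parts in $x$ and $t$. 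The boundary contribution at $x=1$ is exactly $\tfrac12\int_0^T|(x^\alpha\partial_x\varphi_i)(1,t)|^{2}\,dt$ (using $\varphi(1,t)=0$ and $x^\alpha|_{x=1}=1$), while the flatness of $p$ near $x=0$, combined with the degeneracy of $x^\alpha$ and the boundary condition at $x=0$, prevents any uncontrollable contribution from the degenerate endpoint; the remaining volume terms are bounded by $\|\varphi\|^{2}_{C^{0}(H^{1}_\alpha)}+\|\varphi\|^{2}_{L^{2}(H^{2}_\alpha)}+\|g\|^{2}_{L^{2}(L^{2})}$, hence by $\|g\|^{2}_{L^{2}(L^{2})}$ via Proposition~\ref{prop.2.1}. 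The factor $e^{CT}$ is recovered by carrying out the energy estimates on $(0,T)$ and using Gr\"onwall to absorb the zeroth-order term $A^{*}\varphi$ (equivalently, by the bound $\|e^{t(\mathcal A^{*}+A^{*})}\|\le e^{t\|A\|}$ for the underlying analytic semigroup, $\mathcal A:=(x^\alpha(\cdot)_x)_x$).

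\emph{Step 3 (equation, time regularity, initial trace).} Testing the transposition identity against $g(t,x)=\theta(t)h(x)$ with $\theta\in C_c^\infty(0,T)$ and $h\in H^{2}_\alpha(0,1)^{2}$, and using the adjoint equation for the corresponding $\varphi$, one identifies that $y$ solves $y_t-(x^\alpha y_x)_x-Ay=f$ in $L^{2}\big(0,T;(H^{2}_\alpha(0,1)^{2})'\big)$. Since $\mathcal A$ is self-adjoint with the boundary conditions considered, it maps $L^{2}(0,1)$ boundedly into $(H^{2}_\alpha(0,1))'$, so $(x^\alpha y_x)_x\in L^{2}((H^{2}_\alpha)')$; as $Ay+f\in L^{2}(L^{2})\hookrightarrow L^{2}((H^{2}_\alpha)')$, this gives $y_t\in L^{2}(0,T;(H^{2}_\alpha(0,1)^{2})')$. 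Combining $y\in L^{2}(0,T;L^{2})$, $y_t\in L^{2}(0,T;(H^{2}_\alpha)')$, the interpolation identity $[H^{2}_\alpha,L^{2}]_{1/2}=H^{1}_\alpha$ (a consequence of the self-adjointness of $\mathcal A$ and of $D((-\mathcal A)^{1/2})=H^{1}_\alpha$, whence $[L^{2},(H^{2}_\alpha)']_{1/2}=H^{-1}_\alpha$ by duality), and the standard trace theorem for vector-valued Sobolev spaces yields $y\in C^{0}([0,T];H^{-1}_\alpha(0,1)^{2})$; integrating by parts in time in \eqref{duality identity} with $\varphi_0$ arbitrary then identifies $y(\cdot,0)=y_0$. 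This establishes \eqref{bounds.prop.2.2}.

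\emph{Step 4 (improved regularity of $y_2$).} Because $B=e_1$, the second component solves the scalar degenerate heat equation $\partial_t y_2-(x^\alpha y_{2,x})_x=y_1+a_2 y_2+f_2=:h$ with $y_2(t,1)=0$ (and the appropriate condition at $x=0$) and $y_2(\cdot,0)=y_2^{0}$, where $h\in L^{2}(Q_T)$ by Steps 1--3. Applying the scalar analogue of Proposition~\ref{prop.2.1}, i.e.\ the classical degenerate parabolic well-posedness of \cite{CMV2008,Alabau2006,CanTer2009}, in the two cases $y_2^{0}\in L^{2}(0,1)$ and $y_2^{0}\in H^{1}_\alpha(0,1)$ gives respectively $y_2\in L^{2}(0,T;H^{1}_\alpha)\cap C^{0}([0,T];L^{2})$ with \eqref{bounds.prop.2.2 a}, and $y_2\in L^{2}(0,T;H^{2}_\alpha)\cap C^{0}([0,T];H^{1}_\alpha)$ with \eqref{bounds.prop.2.2 b}; the conclusion $y_2\in C^{0}(\overline Q_T)$ then follows from this regularity and the embedding of the relevant weighted space into $C^{0}([0,1])$ (immediate in the weakly degenerate case, where $H^{1}_\alpha(0,1)\subset AC[0,1]$; in the strongly degenerate case from the $H^{2}_\alpha$-regularity together with a weighted Gagliardo--Nirenberg inequality). \emph{The main obstacle} is Step 2: choosing the multiplier so that the degenerate endpoint contributes nothing while the sought trace appears with the correct sign at $x=1$, and then absorbing every resulting volume term using only the a priori estimate of Proposition~\ref{prop.2.1}; keeping all constants of the form $Ce^{CT}$ with $C$ independent of $T$ throughout Steps 2--3 requires comparable care.
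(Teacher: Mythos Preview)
Your overall strategy---transposition via the adjoint system \eqref{adjoint system}, Riesz representation, then interpolation to recover $C^{0}([0,T];H^{-1}_\alpha)$, and finally exploiting $B=e_1$ so that $y_2$ solves a scalar homogeneous-boundary problem---is correct and matches the paper's approach. Two points of comparison are worth making.

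First, you over-engineer Step~2. In the paper, the ``hidden regularity'' of the normal trace at $x=1$ is not obtained by a multiplier argument at all: Proposition~\ref{prop.2.1} already gives $\varphi\in L^{2}(0,T;H^{2}_\alpha(0,1)^{2})$, and by definition of $H^{2}_\alpha$ one has $x^\alpha\varphi_x\in L^{2}(0,T;H^{1}(0,1))$, so the trace $(x^\alpha\varphi_x)(\cdot,1)\in L^{2}(0,T)$ follows immediately from the one-dimensional trace inequality $|u(1)|\le C\|u\|_{H^{1}(0,1)}$. Your Rellich-type multiplier would work, but it is unnecessary here; in particular, the ``main obstacle'' you identify is in fact the easy step.

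Second, for the regularity $(x^\alpha y_x)_x\in L^{2}(0,T;(H^{2}_\alpha)')$ and $y_t\in L^{2}(0,T;(H^{2}_\alpha)')$, the paper takes a different route from your direct duality argument: it approximates $y_0$ by $y_0^m\in H^{1}_\alpha(0,1)^{2}$ and $v$ by $v^m\in H^{1}_0(0,T)$, lifts the boundary datum via $y^m=\tilde y^m+x^{2-\alpha}v^m(t)B$ (note $x\mapsto x^{2-\alpha}\in\mathcal H^{1}_\alpha$), applies Proposition~\ref{prop.2.1} to the resulting homogeneous-boundary problem, and then passes to the limit using an explicit integration-by-parts identity for $\langle(x^\alpha y_x^m)_x,\psi\rangle$ with $\psi\in L^{2}(0,T;H^{2}_\alpha)$. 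This approximation/lifting argument is robust in that it handles the nonzero boundary datum $y(t,1)=Bv(t)$ transparently; your duality argument is more concise but requires some care, since the extension of $\mathcal A$ from $L^{2}$ to $(H^{2}_\alpha)'$ by self-adjointness implicitly encodes \emph{homogeneous} boundary conditions, whereas $y$ does not satisfy them. After that, both proofs use the same interpolation $[L^{2},(H^{2}_\alpha)']_{1/2}=H^{-1}_\alpha$ to obtain continuity in time. Finally, the paper's appendix does not actually spell out items~1 and~2; your Step~4, exploiting that the second equation has homogeneous boundary data and invoking the scalar degenerate regularity theory, is exactly the intended argument.
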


The results shown in proposition \ref{well.posed.transp} are related to a similar ones proved in \cite{AHSS2021} for degenerate problems with singular potentials, based on some ideas from \cite{FBT}. We will include their proof in the Appendix for the reader’s convenience.
\section{Spectral properties of the operators}\label{Section-eigenvalue}

\subsection{Spectral properties of scalar degenerate operators}\label{subsec_spect_BC}
The knowledge of the eigenvalues and associated eigenfunctions of the degenerate diffusion operator \( y \mapsto -(x^{\alpha}y')' \) will be essential for our purposes. It is worth noting that explicit expressions for the eigenvalues are provided in \cite{Gueye} for the weakly degenerate case \( \alpha \in (0, 1) \), and in \cite{moyano} for the strongly degenerate case \( \alpha \in [1, 2) \). These eigenvalues depend on the Bessel functions of the first kind \cite{Watson}.

For this reason, we begin by presenting a brief overview of some results concerning the Bessel functions, which will be useful in the remainder of this paper.

For a real number $\nu$, we denote by $J_{\nu}$ the Bessel function of the first kind of order $\nu$ defined by :
$$
J_{\nu} (x) = \sum_{m\geq 0}  \frac{(-1)^{m}}{m! \;\Gamma(1+\nu+m)} \Big(\frac{x}{2}\Big)^{2m+\nu},
$$
where $\Gamma(.)$ is the Gamma function.

We recall that $\forall \nu \in \mathbb{R}$, the Bessel function $J_{\nu}$ satisfies the following differential
equation
$$
x^2 y'' + x y' + (x^2 - \nu^2) y = 0 \qquad  x\in (0, + \infty).
$$
Besides, the function $J_{\nu}$ has an infinite number of real zeros which are simple (see \cite{Elbert2001}). We denote by $(j_{\nu,n})_{n\geq 1}$ the strictly
increasing sequence of the positive zeros of $J_{\nu}$:
$$ j_{\nu,1} < j_{\nu,2} < \cdots<j_{\nu,n}< \cdots$$
and we recall that
$$j_{\nu,n} \rightarrow +\infty\quad \text{as}\quad n\rightarrow +\infty$$
and the following bounds on the zeros $j_{\nu, n}$, which are provided in \cite{LM2008}:
\begin{align}
&\forall \nu \in \Big(0, \dfrac{1}{2}\Big],\, \forall n \geq 1, &\big( n + \frac{\nu}{2} - \frac{1}{4} \big) \pi \leq j_{\nu, n } \leq \big( n + \frac{\nu}{4} - \frac{1}{8} \big) \pi, \label{boundinf12}\\
&\forall \nu \geq \dfrac{1}{2},\, \forall n \geq 1,  &\big( n + \frac{\nu}{4} - \frac{1}{8} \big) \pi \leq j_{\nu, n } \leq \big( n + \frac{\nu}{2} - \frac{1}{4} \big) \pi.\label{boundsup12}
\end{align}
Moreover, we have the following results (see \cite[Proposition 7.8]{KL2005}):
\begin{lemma}\label{lemmadifference}
Let $j_{\nu,n}, n\geq 1$ be the positive zeros of the Bessel function $J_{\nu}$. Then, the
following holds:
\begin{itemize}
\item[$\bullet$] The difference sequence ($ j_{\nu, n+1} - j_{\nu, n})_{n}$  converges to $\pi$ as $n \longrightarrow +\infty.$
\item[$\bullet$] The sequence $(j_{\nu, n+1} - j_{\nu, n})_{n}$ is strictly decreasing if $|\nu| > \frac{1}{2}$, strictly
increasing if $|\nu| < \frac{1}{2}$, and constant if $\nu= \frac{1}{2}$.
\end{itemize}
\end{lemma}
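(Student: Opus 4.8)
The plan is to reduce the statement to a claim about the spacing of zeros of a solution of a second-order ODE in normal form, and then apply the classical Sturm comparison machinery. Recall that the substitution $J_\nu(x) = x^{-1/2} u_\nu(x)$ turns Bessel's equation into
\[
u_\nu'' + q_\nu(x)\, u_\nu = 0, \qquad q_\nu(x) = 1 - \frac{\nu^2 - \tfrac14}{x^2}, \qquad x \in (0,+\infty),
\]
i.e. $u_\nu(x) = \sqrt{x}\,J_\nu(x)$ solves this equation. Since $\sqrt{x} > 0$ for $x > 0$, the positive zeros of $u_\nu$ are exactly $(j_{\nu,n})_{n\ge 1}$, so it suffices to analyze the gaps between consecutive zeros of the oscillatory solution $u_\nu$ of a Sturm--Liouville equation with a \emph{monotone} potential $q_\nu$.

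First I would prove the monotonicity (second bullet). The elementary input is the Sturm spacing lemma: if $w\not\equiv 0$ solves $w'' + q(x) w = 0$ on an interval where $q$ is continuous and strictly increasing (resp. strictly decreasing, resp. constant), then the successive distances between its zeros form a strictly decreasing (resp. strictly increasing, resp. constant) sequence. Its proof is the standard shift-and-compare argument: given three consecutive zeros $x_1 < x_2 < x_3$, set $h = x_2 - x_1$ and $z(x) = w(x+h)$; then $z'' + q(x+h) z = 0$ and $z(x_1) = w(x_2) = 0$, and when $q$ is strictly increasing we have $q(x+h) > q(x)$ on $(x_1,x_2)$, so a Wronskian computation ($\frac{d}{dx}(wz' - w'z) = (q - q(\cdot+h))wz$, integrated over $(x_1,x_2)$) forces $z$ to vanish at some point of the open interval $(x_1,x_2)$; since the smallest zero of $z$ exceeding $x_1$ is $x_3 - h$, this gives $x_3 - x_2 < x_2 - x_1$, and the other two cases are identical. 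It remains to observe that $q_\nu$ is strictly increasing on $(0,\infty)$ precisely when $\nu^2 - \tfrac14 > 0$, i.e. $|\nu| > \tfrac12$; strictly decreasing precisely when $|\nu| < \tfrac12$; and identically $1$ when $|\nu| = \tfrac12$, in which case $u_{\pm 1/2}$ is a constant multiple of $\sin x$ (resp. $\cos x$) and its zeros are spaced by exactly $\pi$. The singularity of $q_\nu$ at $x=0$ is harmless, since all the $j_{\nu,n}$ lie in a fixed interval bounded away from $0$ on which $q_\nu$ is smooth and strictly monotone, and $u_\nu$ genuinely oscillates because $q_\nu(x) \to 1 > 0$.

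For the convergence to $\pi$ (first bullet) the shortest route is McMahon's asymptotic expansion $j_{\nu,n} = \bigl(n + \tfrac{\nu}{2} - \tfrac14\bigr)\pi + O(1/n)$ as $n\to\infty$ (see \cite{Watson}), which gives $j_{\nu,n+1} - j_{\nu,n} = \pi + O(1/n) \to \pi$. If one prefers to remain self-contained, one can instead combine the monotonicity just established with a boundedness estimate: the sequence $(j_{\nu,n+1}-j_{\nu,n})_n$ is monotone, and by \eqref{boundinf12}--\eqref{boundsup12} it is bounded and satisfies $j_{\nu,n}/n \to \pi$; hence it converges to some $L$, and writing $j_{\nu,n} = j_{\nu,1} + \sum_{k=1}^{n-1}(j_{\nu,k+1}-j_{\nu,k})$ and dividing by $n$, the Cesàro mean of the gaps tends to $L$ while $j_{\nu,n}/n \to \pi$, so $L = \pi$.

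The only genuinely delicate point is the \emph{strict} form of the Sturm spacing lemma, namely getting $x_3 - x_2 < x_2 - x_1$ rather than merely $\le$; this requires the strict monotonicity of $q_\nu$ and the non-degeneracy of the comparison (equivalently $q_\nu(\,\cdot+h\,)\not\equiv q_\nu$ on any subinterval), which is automatic since $q_\nu$ is real-analytic and non-constant whenever $|\nu|\neq\tfrac12$. Everything else is classical bookkeeping driven by the sign of $\nu^2 - \tfrac14$ over the range of $\nu$ relevant to the application; for a streamlined write-up one may simply cite \cite[Proposition 7.8]{KL2005} or Watson's treatment of the zeros of $J_\nu$ in \cite{Watson}, which is the route the statement takes.
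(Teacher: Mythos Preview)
The paper does not prove this lemma at all; it merely records it with a citation to \cite[Proposition~7.8]{KL2005}. Your argument via the Liouville substitution $u_\nu=\sqrt{x}\,J_\nu$, the Sturm shift-and-compare lemma for $u''+q_\nu u=0$ with $q_\nu(x)=1-(\nu^2-\tfrac14)/x^2$, and the Ces\`aro/McMahon computation of the limit is correct and is exactly the classical proof one finds in the cited references, so nothing further is needed.
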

We also have that the Bessel functions enjoy the following integral formula (see \cite{Watson}):
$$
\int_0^1 x  J_{\nu} (j_{\nu, n} x)  J_{\nu}(j_{\nu, m} x)\,dx = \frac{\delta_{nm}}{2} [J_{\nu}^{'}(j_{\nu, n})]^2, \quad n,m \in \mathbb{N}^{*},
$$
where, $\delta_{nm}$ is the Kronecker delta.

Now, we give the expression of the spectrum, i.e., the nontrivial solutions $(\lambda, \Phi)$ of
\begin{equation}\label{eigenvaluesproblem}
\begin{cases}
- (x^{\alpha} \Phi^{'}(x))^{'} = \lambda \Phi(x),  \quad x \in (0,1), \\
\Phi(1)=0,\\
\begin{cases}
\Phi(0)=0, \quad \text{in the (WD) case} \\
(x^{\alpha}\Phi_x) (0)=0, \quad \text{in the (SD) case}.
\end{cases}
\end{cases}
\end{equation}
From now on, let
\begin{equation*}
\kappa_{\alpha}= \frac{2-\alpha}{2} >0 \quad \text{and} \quad \nu_{\alpha} = \begin{cases}
\frac{1- \alpha }{2-\alpha} \in \Big(0,\frac{1}{2}\Big], \quad \text{in the (WD) case}\\[0.2cm]
\frac{\alpha -1 }{2-\alpha} \geq 0, \quad \text{in the (SD) case}.
\end{cases}
\end{equation*}
Then, one has (see \cite{Gueye,moyano}):
\begin{proposition}
The admissible eigenvalues $\lambda$ for problem \eqref{eigenvaluesproblem} are given by
\begin{equation}\label{eigenvalues}
\lambda_{\nu_{\alpha}, n} = \kappa_{\alpha}^2 j_{\nu_{\alpha}, n}^2, \qquad  \forall n \geq 1.
\end{equation}
and the associated normalized (in $L^2(0,1)$) eigenfunctions takes the form
\begin{equation}\label{eigenfunctions}
\Phi_{\nu_{\alpha}, n}(x) =  \frac{\sqrt{2 \kappa_{\alpha}}}{|J'_{\nu_{\alpha}}(j_{\nu_{\alpha}, n})|}
x^{\frac{1-\alpha}{2}} J_{\nu_{\alpha}} (j_{\nu_{\alpha}, n} x^{\kappa_{\alpha}}), \qquad x\in(0,1), \quad n \geq 1.
\end{equation}
Moreover, the family $(\Phi_{\nu_{\alpha}, n})_{n\geq 1} $ forms an orthonormal basis of $L^2(0,1)$.
\end{proposition}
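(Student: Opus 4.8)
The plan is to reduce the singular Sturm–Liouville problem \eqref{eigenvaluesproblem} to Bessel's equation by an explicit change of variables, and then read off the spectrum from the positive zeros of $J_{\nu_\alpha}$. Concretely, I would look for solutions of $-(x^\alpha \Phi')' = \lambda \Phi$ of the form $\Phi(x) = x^{(1-\alpha)/2}\, v(x^{\kappa_\alpha})$, with $\kappa_\alpha = \frac{2-\alpha}{2}$. A direct computation shows that the exponents $\frac{1-\alpha}{2}$ and $\kappa_\alpha$ are chosen precisely so that, after dividing by the leading coefficient, the first-order term is normalized to $\frac1z v'$ and the explicit $x$-dependence of the zeroth-order coefficient disappears; one is left with
\[
v'' + \frac{1}{z}\, v' + \Big(\frac{\lambda}{\kappa_\alpha^2} - \frac{\nu_\alpha^2}{z^2}\Big) v = 0,
\]
where $\nu_\alpha = \frac{|1-\alpha|}{2-\alpha}$ is exactly the index introduced before the statement (the two sign conventions for $1-\alpha$ giving the (WD) and (SD) formulas for $\nu_\alpha$). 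Since the operator $u \mapsto -(x^\alpha u')'$ with the boundary conditions of \eqref{eigenvaluesproblem} is symmetric and, by integration by parts together with those boundary conditions, strictly positive (the only constant lying in $H^1_\alpha(0,1)$ is $0$ because of $u(1)=0$), it suffices to treat $\lambda>0$; setting $\omega = \sqrt{\lambda}/\kappa_\alpha$ and rescaling $z\mapsto \omega z$ turns the displayed equation into Bessel's equation of order $\nu_\alpha$, whose general solution is a combination of $J_{\nu_\alpha}(\omega x^{\kappa_\alpha})$ and the linearly independent singular Bessel solution (namely $J_{-\nu_\alpha}$ when $\nu_\alpha\notin\mathbb{N}$, and $Y_{\nu_\alpha}$ otherwise, in particular when $\alpha=1$ and $\nu_\alpha=0$).

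Next I would impose the boundary conditions. The key point is that the singular solution produces, after multiplication by $x^{(1-\alpha)/2}$, a term behaving near $x=0$ like $x^{(1-\alpha)/2 - \kappa_\alpha\nu_\alpha} = x^{(1-\alpha-|1-\alpha|)/2}$ (with an extra logarithmic factor when $\nu_\alpha=0$): in the (WD) case this tends to a nonzero constant, hence violates the Dirichlet condition $\Phi(0)=0$; in the (SD) case one checks that neither $x^{\alpha/2}\Phi'$ belongs to $L^2$ near $0$ nor does $(x^\alpha\Phi')(0)=0$ hold. Thus, in both regimes, membership in $H^1_\alpha(0,1)$ forces the coefficient of the singular solution to vanish, leaving $\Phi(x) = c\, x^{(1-\alpha)/2} J_{\nu_\alpha}(\omega x^{\kappa_\alpha})$; conversely this function does belong to $H^1_\alpha(0,1)$ since $J_{\nu_\alpha}(z)\sim c\, z^{\nu_\alpha}$ as $z\to 0$. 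The remaining condition $\Phi(1)=0$ then reads $J_{\nu_\alpha}(\omega)=0$, i.e. $\omega = j_{\nu_\alpha,n}$ for some $n\ge 1$, which yields $\lambda = \kappa_\alpha^2\omega^2 = \kappa_\alpha^2 j_{\nu_\alpha,n}^2$, exactly \eqref{eigenvalues}, with eigenfunction proportional to the one in \eqref{eigenfunctions}. Since the positive zeros $j_{\nu_\alpha,n}$ are simple and strictly increasing, the eigenvalues are pairwise distinct and each eigenspace is one–dimensional.

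To fix the normalization constant I would compute $\int_0^1 \Phi_{\nu_\alpha,n}(x)^2\,dx = c^2\int_0^1 x^{1-\alpha} J_{\nu_\alpha}(j_{\nu_\alpha,n}x^{\kappa_\alpha})^2\,dx$ via the substitution $t=x^{\kappa_\alpha}$; because $\frac{1-\alpha}{\kappa_\alpha} + \frac{1}{\kappa_\alpha} - 1 = 1$, the integral collapses to $\frac{c^2}{\kappa_\alpha}\int_0^1 t\, J_{\nu_\alpha}(j_{\nu_\alpha,n}t)^2\,dt = \frac{c^2}{2\kappa_\alpha}[J'_{\nu_\alpha}(j_{\nu_\alpha,n})]^2$ by the quoted Bessel orthogonality relation, so normalizing in $L^2$ gives $c = \sqrt{2\kappa_\alpha}/|J'_{\nu_\alpha}(j_{\nu_\alpha,n})|$, which is precisely \eqref{eigenfunctions}. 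Orthonormality of the family $(\Phi_{\nu_\alpha,n})_{n\ge1}$ is then automatic (distinct eigenvalues of a symmetric operator have orthogonal eigenfunctions, each eigenvalue is simple, and the constant was just chosen), and completeness in $L^2(0,1)$ follows either from the fact that $-(x^\alpha\cdot')'$ has compact resolvent — the embedding $H^1_\alpha(0,1)\hookrightarrow L^2(0,1)$ being compact through a weighted Hardy/Poincaré inequality — so that the spectral theorem for self-adjoint operators with compact resolvent applies, or, alternatively, by transporting the classical completeness of the Fourier–Bessel system $\{\sqrt{t}\,J_{\nu_\alpha}(j_{\nu_\alpha,n}t)\}_{n\ge1}$ in $L^2(0,1)$ through the same (unitary) change of variables. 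I expect the only genuinely delicate step to be the endpoint analysis at $x=0$, where the borderline cases $\alpha=0$ ($\nu_\alpha=\tfrac12$) and especially $\alpha=1$ ($\nu_\alpha=0$, logarithmic second solution) must be verified by hand to ensure the singular solution is truly excluded by the functional setting; the remainder is a careful but routine computation together with standard Bessel asymptotics and compactness of the resolvent.
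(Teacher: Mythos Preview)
Your argument is correct and follows the standard route (Liouville-type substitution $\Phi(x)=x^{(1-\alpha)/2}v(x^{\kappa_\alpha})$ reducing to Bessel's equation, exclusion of the singular Bessel solution via the boundary behaviour at $x=0$, quantization from $\Phi(1)=0$, normalization through the Bessel orthogonality relation, and completeness via compact resolvent or the Fourier--Bessel basis). Note, however, that the paper does not actually give a proof of this proposition: it simply quotes the result with the attribution ``(see \cite{Gueye,moyano})'', so there is no in-paper argument to compare with. Your sketch is precisely the computation carried out in those references, so in that sense you have reproduced the intended proof rather than found an alternative one.
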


In both cases of degeneracy, the spectrum of the
associated degenerate operator satisfies the following properties proved for the general degenerate/singular operator spectrum in \cite{AHSS2021}.

\begin{lemma}\label{gap-cv-series}
Let $(\lambda_{\nu_{\alpha}, k})_{k\geq 1}$ be the sequence of eigenvalues of the spectral problem \eqref{eigenvaluesproblem}.
Then, the following properties hold:
\begin{enumerate}
\item For all $n,m \in \mathbb{N}^\star$, there is a constant $\rho_{\alpha} > 0$ such that the sequence of eigenvalues
$(\lambda_{\nu_{\alpha}, n})_{n\geq 1}$ satisfy the gap condition:
\begin{equation}\label{gapresult}
| \lambda_{\nu_{\alpha}, n} - \lambda_{\nu_{\alpha}, m} | \geq \rho_{\alpha} |n^2 - m^2|, \qquad \forall n,m \geq 1.
\end{equation}
\item The series $\displaystyle\sum_{n}  \frac{1}{\lambda_{\nu_{\alpha},n}}$ is convergent.
\end{enumerate}
\end{lemma}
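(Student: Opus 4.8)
The plan is to read both assertions off the closed-form expression \eqref{eigenvalues}, namely $\lambda_{\nu_{\alpha},n} = \kappa_{\alpha}^2 j_{\nu_{\alpha},n}^2$, so that everything reduces to elementary statements about the positive Bessel zeros $j_{\nu_{\alpha},n}$. For the gap estimate I would fix $n > m \geq 1$ and factor
\[
\lambda_{\nu_{\alpha},n} - \lambda_{\nu_{\alpha},m} = \kappa_{\alpha}^2\,\bigl(j_{\nu_{\alpha},n} - j_{\nu_{\alpha},m}\bigr)\bigl(j_{\nu_{\alpha},n} + j_{\nu_{\alpha},m}\bigr);
\]
since $n^2 - m^2 = (n-m)(n+m)$, it is then enough to bound $j_{\nu_{\alpha},n} - j_{\nu_{\alpha},m}$ below by a positive multiple of $n-m$ and $j_{\nu_{\alpha},n} + j_{\nu_{\alpha},m}$ below by a positive multiple of $n+m$, with constants depending only on $\alpha$.

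The difference factor is controlled by Lemma \ref{lemmadifference}. The gap sequence $(j_{\nu_{\alpha},k+1} - j_{\nu_{\alpha},k})_{k}$ tends to $\pi$; it is strictly decreasing when $\nu_{\alpha} > \tfrac12$, so every gap exceeds $\pi$; it is identically $\pi$ when $\nu_{\alpha} = \tfrac12$; and it is strictly increasing when $\nu_{\alpha} < \tfrac12$, so every gap is at least $j_{\nu_{\alpha},2} - j_{\nu_{\alpha},1}$, a quantity that is strictly positive because the zeros of $J_{\nu_{\alpha}}$ are simple and increasing. In all three cases there is $c_{\alpha} > 0$ with $j_{\nu_{\alpha},k+1} - j_{\nu_{\alpha},k} \geq c_{\alpha}$ for every $k$, hence, telescoping, $j_{\nu_{\alpha},n} - j_{\nu_{\alpha},m} = \sum_{k=m}^{n-1}\bigl(j_{\nu_{\alpha},k+1} - j_{\nu_{\alpha},k}\bigr) \geq c_{\alpha}(n-m)$. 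Combined with $j_{\nu_{\alpha},1} > 0$ this also gives $j_{\nu_{\alpha},n} \geq j_{\nu_{\alpha},1} + (n-1)c_{\alpha} \geq d_{\alpha}\,n$ with $d_{\alpha} := \min\{j_{\nu_{\alpha},1}, c_{\alpha}\} > 0$, whence $j_{\nu_{\alpha},n} + j_{\nu_{\alpha},m} \geq d_{\alpha}(n+m)$. Multiplying the two bounds by $\kappa_{\alpha}^2$ yields $|\lambda_{\nu_{\alpha},n} - \lambda_{\nu_{\alpha},m}| \geq \kappa_{\alpha}^2 c_{\alpha} d_{\alpha}\,|n^2 - m^2|$ for $n \neq m$ (and trivially for $n = m$), i.e. \eqref{gapresult} with $\rho_{\alpha} := \kappa_{\alpha}^2 c_{\alpha} d_{\alpha}$. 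Alternatively, the same linear lower bounds $j_{\nu_{\alpha},n} \geq d_{\alpha} n$ can be read directly from \eqref{boundinf12}--\eqref{boundsup12}.

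For the convergence of $\sum_{n} 1/\lambda_{\nu_{\alpha},n}$ I would use the estimate $j_{\nu_{\alpha},n} \geq d_{\alpha} n$ just obtained: it gives $1/\lambda_{\nu_{\alpha},n} = 1/(\kappa_{\alpha}^2 j_{\nu_{\alpha},n}^2) \leq 1/(\kappa_{\alpha}^2 d_{\alpha}^2 n^2)$, so the series is dominated by a convergent $p$-series; one could instead invoke the classical Rayleigh identity $\sum_{n\geq 1} j_{\nu,n}^{-2} = 1/(4(\nu+1))$. The only delicate point is the case analysis on $\nu_{\alpha}$ in the treatment of the gap sequence: when $\nu_{\alpha} > \tfrac12$ the admissible uniform lower bound for the consecutive gaps is the limit $\pi$ rather than the first gap, whereas when $\nu_{\alpha} < \tfrac12$ it is the first gap $j_{\nu_{\alpha},2} - j_{\nu_{\alpha},1}$; once this is sorted out the rest is routine. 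In particular the borderline value $\nu_{\alpha} = 0$ (which occurs at $\alpha = 1$ in the strongly degenerate regime, where \eqref{boundinf12} does not directly apply) is covered by the Lemma \ref{lemmadifference} argument with no modification.
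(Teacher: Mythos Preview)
Your argument is correct. The paper itself does not supply a proof of this lemma but simply refers to \cite{AHSS2021}; your proposal fills in exactly the kind of argument that reference contains, and it uses precisely the ingredients the paper has set up for this purpose (Lemma~\ref{lemmadifference} on the monotonicity and limit of the consecutive Bessel-zero gaps, together with the linear bounds \eqref{boundinf12}--\eqref{boundsup12}). The factorisation $\lambda_{\nu_\alpha,n}-\lambda_{\nu_\alpha,m}=\kappa_\alpha^2(j_{\nu_\alpha,n}-j_{\nu_\alpha,m})(j_{\nu_\alpha,n}+j_{\nu_\alpha,m})$, the telescoping lower bound on the difference factor via the uniform gap $c_\alpha$, and the linear growth $j_{\nu_\alpha,n}\geq d_\alpha n$ for the sum factor constitute the standard route, and your case analysis on $\nu_\alpha$ relative to $\tfrac12$ is handled correctly (including the borderline $\nu_\alpha=0$ at $\alpha=1$). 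The convergence of $\sum_n \lambda_{\nu_\alpha,n}^{-1}$ then follows immediately from the same quadratic lower bound $\lambda_{\nu_\alpha,n}\geq \kappa_\alpha^2 d_\alpha^2 n^2$, as you note.
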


\subsection{Spectral properties of vectorial degenerate operators}

Let us consider the degenerate vectorial operators
\begin{equation}\label{operatorL}
L:=\left(\begin{matrix}
- \partial_x (x^{\alpha} \partial_x \cdot ) & 0 \\
0 & - \partial_x (x^{\alpha} \partial_x \cdot )
\end{matrix}\right) - A: D(L) \subset L^2(0,1)^2 \rightarrow L^2(0,1)^2
\end{equation}
and also its adjoint
\begin{equation*}
L^*:=\left(\begin{matrix}
- \partial_x (x^{\alpha} \partial_x \cdot ) & 0 \\
0 & - \partial_x (x^{\alpha} \partial_x \cdot )
\end{matrix}\right) - A^{*}
\end{equation*}
with domains $D(L)=D(L^*)=H^{2}_{\alpha}(0,1)^2$.
This section is devoted to presenting some spectral properties of the operators \( L \) and \( L^* \), which will be useful for developing the moment method. In particular, we state the following result:
\begin{proposition}
\begin{enumerate}
\item
The spectra of $L$ and $L^*$ are given by
\begin{equation}\label{sigmaLL*}
\sigma(L) = \sigma(L^*) =
\big\{ \lambda_{\nu_{\alpha}, n}^{(1)} , \lambda_{\nu_{\alpha}, n}^{(2)} \big\}_{n\geq1}
= \big\{ \kappa_{\alpha}^2 j_{\nu_{\alpha},n}^2 - \mu_1 , \;  \kappa_{\alpha}^2 j_{\nu_{\alpha},n}^2 - \mu_2   \big\}_{n\geq1}
\end{equation}
where $\mu_1$ and $\mu_2$ are the eigenvalues of the matrix $A$ defined by :
\begin{itemize}
\item[$\bullet$] Case 1: $a_2^2 + 4a_1>0 $,
\begin{equation}\label{mu1mu2real}
\mu_1 = \frac{1}{2} \Big(a_2 - \sqrt{a_2^2 + 4a_1}\Big)  \quad \text{and} \quad \mu_2 = \frac{1}{2} \Big(a_2 + \sqrt{a_2^2 + 4a_1}\Big),
\end{equation}
\item[$\bullet$] Case 2: $a_2^2 + 4a_1<0 $,
\begin{equation}\label{mu1mu2complex}
\mu_1 = \frac{1}{2} \Big(a_2 + i \sqrt{-(a_2^2 + 4a_1)}\Big)  \quad \text{and} \quad \mu_2 = \frac{1}{2}  \Big(a_2 - i \sqrt{-(a_2^2 + 4a_1)} \Big),
\end{equation}
\end{itemize}
\item
For each $n \geq 1$, the corresponding eigenfunctions of $L$ (resp., $L^*$) associated to $\lambda_{\nu_{\alpha}, n}^{(1)}$ and $\lambda_{\nu_{\alpha}, n}^{(2)}$ are respectively given by
\begin{equation}\label{eigenf-L}
\psi_n^{(1)}  =U_{1} \Phi_{\nu_{\alpha}, n}, \qquad \psi_n^{(2)}=U_{2} \Phi_{\nu_{\alpha}, n},
\end{equation}
with
\begin{equation*}
U_{1}=\frac{1}{\mu_{1}-\mu_{2}}\left(\begin{array}{c}-\mu_{2} \\
1\end{array}\right) \quad \text { and } \quad U_{2}=\frac{1}{\mu_{2}-\mu_{1}}\left(\begin{array}{c}
-\mu_{1} \\1
\end{array}\right)
\end{equation*}
$\left(\text{resp}.,\right.$
\begin{equation}\label{eigenf-L*}
\Psi_n^{(1)}  = V_{1}\Phi_{\nu_{\alpha}, n}, \qquad \Psi_n^{(2)}  = V_{2}\Phi_{\nu_{\alpha}, n},
\end{equation}
with
\begin{equation*}
V_{1}=\left(\begin{array}{c}
1 \\
\mu_{1}
\end{array}\right) \quad \text { and } \quad V_{2}=\left(\begin{array}{c}
1 \\
\mu_{2}
\end{array}\right)\left.\right).
\end{equation*}
\end{enumerate}
\label{propertieseigenfLL}
\end{proposition}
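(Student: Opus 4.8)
\emph{Proof strategy.} The plan is to diagonalise the coupling matrix and thereby reduce the vectorial spectral problem to the scalar one already solved in \eqref{eigenvalues}--\eqref{eigenfunctions}. Since \eqref{condition on A} holds, $a_2^2+4a_1\neq 0$, so $A$ in \eqref{Vec-sys-matrix} has two distinct eigenvalues $\mu_1,\mu_2$ given by \eqref{mu1mu2real} or \eqref{mu1mu2complex}; in particular $A$ is diagonalisable (over $\mathbb{C}$, which covers Case~2). Using the elementary relations $\mu_1+\mu_2=a_2$ and $\mu_1\mu_2=-a_1$, a direct $2\times2$ computation shows that $A U_i=\mu_i U_i$ and $A^{*}V_i=\mu_i V_i$ for the vectors $U_i,V_i$ in the statement (recall $A$ is real, so $A^{*}$ is its transpose and has the same characteristic polynomial, hence the same eigenvalues). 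Moreover $\{U_1,U_2\}$ and $\{V_1,V_2\}$ are dual bases of $\mathbb{C}^2$: setting $P=[\,U_1\mid U_2\,]$, one checks $P^{-1}=\begin{pmatrix}1&\mu_1\\ 1&\mu_2\end{pmatrix}$, i.e. the rows of $P^{-1}$ are $V_1,V_2$, so that $P^{-1}AP=\mathrm{diag}(\mu_1,\mu_2)$ and, with $Q:=[\,V_1\mid V_2\,]=(P^{-1})^{*}$, also $Q^{-1}A^{*}Q=\mathrm{diag}(\mu_1,\mu_2)$.

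Next I would transfer this to the operator level. Write $\mathcal A_\alpha u:=-(x^\alpha u')'$ with the boundary conditions of \eqref{eigenvaluesproblem}, so that $L=\mathrm{diag}(\mathcal A_\alpha,\mathcal A_\alpha)-A$ on $D(L)=H^{2}_\alpha(0,1)^2$ and likewise for $L^{*}$. Multiplication by the constant matrix $P$ (pointwise in $x$) is a bounded isomorphism of $L^2(0,1)^2$, and it preserves $H^{2}_\alpha(0,1)^2$ because $P$ does not depend on $x$ and the boundary conditions in \eqref{eigenvaluesproblem} are identical in the two components; it also commutes with the diagonal diffusion block $\mathrm{diag}(\mathcal A_\alpha,\mathcal A_\alpha)$. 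Hence
$P^{-1}LP=\mathrm{diag}\!\left(\mathcal A_\alpha-\mu_1,\ \mathcal A_\alpha-\mu_2\right)$ and, similarly, $Q^{-1}L^{*}Q=\mathrm{diag}\!\left(\mathcal A_\alpha-\mu_1,\ \mathcal A_\alpha-\mu_2\right)$. Thus $L$ and $L^{*}$ are similar to one and the same decoupled operator, whose spectrum is the union of the spectra of its two scalar blocks.

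Then the scalar result applies: $\mathcal A_\alpha$ is self-adjoint with compact resolvent (its normalised eigenfunctions $\Phi_{\nu_\alpha,n}$ form an orthonormal basis of $L^2(0,1)$), so $\sigma(\mathcal A_\alpha)=\{\lambda_{\nu_\alpha,n}=\kappa_\alpha^2 j_{\nu_\alpha,n}^2\}_{n\ge1}$ consists only of eigenvalues, and therefore $\sigma(\mathcal A_\alpha-\mu_i)=\{\lambda_{\nu_\alpha,n}-\mu_i\}_{n\ge1}$. Taking the union over $i=1,2$ gives \eqref{sigmaLL*} for both $L$ and $L^{*}$. For the eigenfunctions, the decoupled operator $\mathrm{diag}(\mathcal A_\alpha-\mu_1,\mathcal A_\alpha-\mu_2)$ has $(\Phi_{\nu_\alpha,n},0)^{*}$ and $(0,\Phi_{\nu_\alpha,n})^{*}$ as eigenfunctions for $\lambda_{\nu_\alpha,n}-\mu_1$ and $\lambda_{\nu_\alpha,n}-\mu_2$ respectively; applying $P$ (resp. $Q$) yields $\psi_n^{(1)}=U_1\Phi_{\nu_\alpha,n}$, $\psi_n^{(2)}=U_2\Phi_{\nu_\alpha,n}$ (resp. $\Psi_n^{(1)}=V_1\Phi_{\nu_\alpha,n}$, $\Psi_n^{(2)}=V_2\Phi_{\nu_\alpha,n}$), which are the formulas \eqref{eigenf-L}--\eqref{eigenf-L*}. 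Since $\{U_1,U_2\}$ spans $\mathbb{C}^2$ and $\{\Phi_{\nu_\alpha,n}\}_{n\ge1}$ is complete, these eigenfunctions already span $L^2(0,1)^2$, so no part of the spectrum is missed.

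I expect the only two points needing a word of care to be: (i) verifying that the pointwise conjugation by $P$ genuinely maps $D(L)=H^{2}_\alpha(0,1)^2$ onto itself (routine, but it is exactly what legitimises the reduction to the decoupled operator); and (ii) the bookkeeping in Case~2, where $\mu_1,\mu_2$ are complex conjugates — here one should simply note that $\{\mu_1,\mu_2\}$ is stable under complex conjugation, so that $\sigma(L)=\sigma(L^{*})$ holds as an equality of sets regardless of whether $A^{*}$ is read as a transpose or as a Hermitian adjoint. Everything else is a direct computation with $2\times2$ matrices together with the already-established scalar spectral decomposition.
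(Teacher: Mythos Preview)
Your argument is correct, and it takes a genuinely different route from the paper's own proof. The paper expands an arbitrary eigenfunction in the scalar basis, writing $\psi=\sum_{n\ge1}a_n\Phi_{\nu_\alpha,n}$ with $a_n\in\mathbb{C}^2$, and reads off from $L\psi=\lambda\psi$ the finite-dimensional problem $(\lambda_{\nu_\alpha,n}I-A-\lambda I)a_n=0$ for each $n$; the eigenvalues of $L$ are then those of the matrices $\lambda_{\nu_\alpha,n}I-A$, and the characteristic polynomial gives \eqref{sigmaLL*}. You instead diagonalise the coupling first: conjugating by the constant matrix $P=[U_1\mid U_2]$ turns $L$ into $\mathrm{diag}(\mathcal A_\alpha-\mu_1,\mathcal A_\alpha-\mu_2)$, whose spectrum is read off directly from the scalar theory. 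The two approaches simply reverse the order of the two separations of variables (spatial mode vs.\ $\mathbb{C}^2$-direction). Your route makes the equality $\sigma(L)=\sigma(L^{*})$ and the compact-resolvent/pure-point-spectrum structure more transparent, and it explains naturally why the eigenfunctions are rank-one tensors $U_i\otimes\Phi_{\nu_\alpha,n}$; the paper's route avoids having to check that conjugation by $P$ preserves the domain $H^2_\alpha(0,1)^2$ (your point~(i)), which is indeed routine here since $P$ is constant and the boundary conditions are componentwise. Either way the computation is elementary.
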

As mentioned earlier, the existence of the biorthogonal family is essential for expressing the control force as a formal solution to the moment problem. To this end, the following proposition is crucial for verifying that the sequence of eigenvalues of \( L \) and \( L^* \) satisfies the conditions in Theorem \ref{thm-biorth}. Since its proof closely follows that in \cite{AHSS2021, ASS2022}, we omit it.
\begin{proposition}\label{prop-incres-seq} 
Assume that condition \eqref{spectre-cond} holds. Then, the family 
\begin{equation}\label{increas-sequence}
\begin{aligned}
\big\{ \Lambda_{\nu_{\alpha}, n} \big\}_{n\geq1} &= \{\lambda_{\nu_{\alpha}, k} + \mu_2 - \mu_1: k\geq 1\} \cup \{\lambda_{\nu_{\alpha}, k}: k \geq 1\}  \\
&=\big\{ \lambda_{\nu_{\alpha}, n}^{(1)} + \mu_2 , \lambda_{\nu_{\alpha}, n}^{(2)}+\mu_2 \big\}_{n\geq1},
\end{aligned}
\end{equation}
satisfies the hypotheses in Theorem \ref{thm-biorth}.
\end{proposition}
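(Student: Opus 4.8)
The goal is to verify that the strictly increasing sequence $\{\Lambda_{\nu_\alpha,n}\}_{n\geq 1}$ obtained by merging $\{\lambda_{\nu_\alpha,k}\}_k$ with its shift $\{\lambda_{\nu_\alpha,k}+\mu_2-\mu_1\}_k$ satisfies the hypotheses of the biorthogonal-family theorem (Theorem~\ref{thm-biorth}, to which the proof is deferred). Those hypotheses are, in essence, the usual Fattorini--Russell requirements: the terms are nonzero (or at worst finitely many accumulate near the origin), they have positive real part up to a finite shift, a summability/Blaschke condition $\sum 1/|\Lambda_{\nu_\alpha,n}| < \infty$ (really $\sum \mathrm{Re}(1/\Lambda_{\nu_\alpha,n})<\infty$ or a counting-function bound $N(r)\leq \beta r + \gamma$), and a uniform gap condition $|\Lambda_{\nu_\alpha,n+1}-\Lambda_{\nu_\alpha,n}|\geq \rho >0$ (possibly after discarding finitely many indices), together with a lower bound $|\Lambda_{\nu_\alpha,n}|\geq c\,n^2$ type estimate if that is part of the theorem's statement. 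So the plan is to derive each of these properties for the merged sequence from Lemma~\ref{gap-cv-series} and Lemma~\ref{lemmadifference}, exactly as in \cite{AHSS2021, ASS2022}.

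First I would record the asymptotics: from \eqref{eigenvalues} and the Bessel-zero bounds \eqref{boundinf12}--\eqref{boundsup12} one has $\lambda_{\nu_\alpha,k} = \kappa_\alpha^2 j_{\nu_\alpha,k}^2 = \kappa_\alpha^2\pi^2 k^2 + O(k)$, so the two interlacing branches $\lambda_{\nu_\alpha,k}^{(1)}+\mu_2 = \lambda_{\nu_\alpha,k}+\mu_2-\mu_1$ and $\lambda_{\nu_\alpha,k}^{(2)}+\mu_2 = \lambda_{\nu_\alpha,k}$ each grow like $\kappa_\alpha^2\pi^2 k^2$. From this, after reindexing the union in increasing order of real part (real eigenvalues in Case~1, or of real part in Case~2 where the imaginary parts are the fixed bounded constants $\pm\tfrac12\sqrt{-(a_2^2+4a_1)}$), the $n$-th term $\Lambda_{\nu_\alpha,n}$ behaves like $\tfrac14\kappa_\alpha^2\pi^2 n^2$ up to lower-order terms, which immediately yields both the quadratic lower bound $|\Lambda_{\nu_\alpha,n}|\geq c n^2$ for large $n$ and the convergence of $\sum 1/|\Lambda_{\nu_\alpha,n}|$ (equivalently the counting-function bound $N(r)\leq \beta\sqrt r + \gamma$), handling all the summability/growth hypotheses.

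The delicate hypothesis is the uniform gap. Within a single branch, consecutive differences are $\kappa_\alpha^2(j_{\nu_\alpha,k+1}^2 - j_{\nu_\alpha,k}^2)$, which by \eqref{gapresult} are bounded below by $\rho_\alpha(2k+1)\geq 3\rho_\alpha$, so each branch is uniformly separated. The danger is a near-collision \emph{between} the two branches, i.e. $\lambda_{\nu_\alpha,n} \approx \lambda_{\nu_\alpha,l}+\mu_2-\mu_1$ for some $n\neq l$, or $\lambda_{\nu_\alpha,n}\approx \lambda_{\nu_\alpha,n}+\mu_2-\mu_1$ (impossible since $\mu_2\neq\mu_1$ by \eqref{condition on A}, giving a constant separation $|\mu_2-\mu_1|$ for the "diagonal" pairs). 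For the off-diagonal pairs $n\neq l$, hypothesis \eqref{spectre-cond} says precisely that $\kappa_\alpha^2(j_{\nu_\alpha,n}^2-j_{\nu_\alpha,l}^2)\neq \mu_2-\mu_1$, so these are all nonzero; to upgrade "nonzero" to "uniformly bounded away from zero" I would argue that for $|n-l|$ large the difference $\kappa_\alpha^2(j_{\nu_\alpha,n}^2-j_{\nu_\alpha,l}^2)-(\mu_2-\mu_1)$ has large modulus (by the quadratic growth of the $j^2$'s), so only finitely many pairs $(n,l)$ can produce a value within, say, distance $1$ of $0$; each of those finitely many values is nonzero by \eqref{spectre-cond}, hence their minimum modulus is a positive constant. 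Combining the diagonal bound, the within-branch bounds, and this finite off-diagonal bound gives a uniform $\rho>0$ with $|\Lambda_{\nu_\alpha,n+1}-\Lambda_{\nu_\alpha,n}|\geq\rho$ for all $n$ (or for all $n$ past a finite threshold, which is all the biorthogonal construction needs).

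I expect the main obstacle to be precisely this passage from the generic non-resonance condition \eqref{spectre-cond} to a \emph{uniform} spectral gap: one must be careful that the "finitely many dangerous pairs" claim is legitimate, i.e. quantify how $j_{\nu_\alpha,n}^2 - j_{\nu_\alpha,l}^2$ grows in terms of $\max(n,l)$ and $|n-l|$ using \eqref{boundinf12}--\eqref{boundsup12}, and to treat Case~2 (complex $\mu_1,\mu_2$) where "gap" means distance in $\mathbb{C}$ and the two branches sit on two horizontal lines $\{\mathrm{Im}\,z = \pm\tfrac12\sqrt{-(a_2^2+4a_1)}\}$ — there the vertical separation already helps but one still needs the real parts not to force an ordering pathology. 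Since all the structural inputs (the gap estimate \eqref{gapresult}, the convergence of $\sum 1/\lambda_{\nu_\alpha,n}$, the monotonic behaviour of Bessel-zero differences, and $\mu_1\neq\mu_2$) are available, and the argument is the verbatim analogue of the one in \cite{AHSS2021, ASS2022}, the proof is routine modulo this bookkeeping, which is why the authors omit it.
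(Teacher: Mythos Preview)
Your proposal is correct and follows exactly the route the paper intends: the authors omit the proof entirely, writing only that it ``closely follows that in \cite{AHSS2021, ASS2022}'', and your sketch is precisely the verification of the six hypotheses of Theorem~\ref{thm-biorth} from the Bessel-zero asymptotics \eqref{boundinf12}--\eqref{boundsup12}, the scalar gap estimate \eqref{gapresult}, and the non-resonance assumption \eqref{spectre-cond}, which is what those references do.

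One minor calibration: the gap hypothesis in Theorem~\ref{thm-biorth} is the two-part condition \eqref{strong-gap}, namely $|\Lambda_n-\Lambda_m|\geq\varrho\,|n^2-m^2|$ for $|n-m|\geq q$ together with a positive infimum over the finitely many close-index pairs, rather than merely a consecutive-gap bound $|\Lambda_{n+1}-\Lambda_n|\geq\rho$. Your ingredients already give this stronger form (the quadratic growth $\Lambda_n\sim \tfrac14\kappa_\alpha^2\pi^2 n^2$ yields the first part for any fixed $q\geq 2$, and your ``finitely many dangerous pairs'' argument via \eqref{spectre-cond} and $\mu_1\neq\mu_2$ handles the second), so no new idea is needed---just phrase the conclusion to match \eqref{strong-gap} rather than a nearest-neighbour gap.
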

Furthermore, let us present the following result on the set of eigenfunctions of the operators $L$ and
$L^*$. It reads as follows:
\begin{proposition}\label{rieszbasisresult}
Let us consider the sequences
\begin{equation}\label{BB*}
\mathcal{B} = \big\{\psi_n^{(1)} , \psi_n^{(2)}, \quad n \geq 1 \big\} \quad \text{and} \quad
\mathcal{B}^* = \big\{\Psi_n^{(1)} , \Psi_n^{(2)}, \quad n \geq 1 \big\}.
\end{equation}
Then,
\begin{enumerate}
\item $\mathcal{B}$ and $\mathcal{B}^*$ are biorthogonal families in $L^2 (0, 1)^2$.
\item $\mathcal{B}$ and $\mathcal{B}^*$ are complete sequences in $L^2 (0, 1)^2$.
\item The sequences $\mathcal{B}$ and $\mathcal{B}^*$ are biorthogonal Riesz bases of $L^2 (0, 1)^2$.
\item The sequence $\mathcal{B}^*$ is a basis of $H^{1}_{\alpha}(0,1)^2$ and $\mathcal{B}$ is its biorthogonal basis in $H^{-1}_{\alpha}(0,1)^2$.
\end{enumerate}
\end{proposition}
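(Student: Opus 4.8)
The plan is to establish the four assertions in order, each building on the previous one, exploiting the fact that all the operators in play are built from the scalar degenerate operator $y \mapsto -(x^\alpha y')'$ whose normalized eigenfunctions $\{\Phi_{\nu_\alpha,n}\}_{n\ge1}$ form an orthonormal basis of $L^2(0,1)$, tensored with the finite-dimensional data coming from the $2\times 2$ matrix $A$.

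\smallskip
\textbf{Biorthogonality.} First I would compute $\langle \psi_n^{(i)}, \Psi_m^{(j)}\rangle$ directly. Since $\psi_n^{(i)} = U_i \Phi_{\nu_\alpha,n}$ and $\Psi_m^{(j)} = V_j \Phi_{\nu_\alpha,m}$, one has $\langle \psi_n^{(i)}, \Psi_m^{(j)}\rangle = (U_i \cdot V_j)\,\langle \Phi_{\nu_\alpha,n}, \Phi_{\nu_\alpha,m}\rangle = (U_i\cdot V_j)\,\delta_{nm}$ by orthonormality of the $\Phi$'s. So it remains to check the $2\times 2$ numerical identity $U_i\cdot V_j = \delta_{ij}$, which is immediate from the explicit formulas: $U_1\cdot V_1 = \frac{1}{\mu_1-\mu_2}(-\mu_2 + \mu_1) = 1$, $U_1\cdot V_2 = \frac{1}{\mu_1-\mu_2}(-\mu_2+\mu_2)=0$, and symmetrically for $U_2$. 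Note this uses $\mu_1\neq\mu_2$, guaranteed by \eqref{condition on A}.

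\smallskip
\textbf{Completeness.} Suppose $z\in L^2(0,1)^2$ is orthogonal to all of $\mathcal{B}$. Expanding $z = \sum_n (z_n^1, z_n^2)\Phi_{\nu_\alpha,n}$ against the orthonormal scalar basis, orthogonality to $\psi_n^{(1)}$ and $\psi_n^{(2)}$ for each fixed $n$ forces $(z_n^1,z_n^2)\cdot U_1 = (z_n^1,z_n^2)\cdot U_2 = 0$; since $\{U_1,U_2\}$ is a basis of $\mathbb{R}^2$ (again because $\mu_1\neq\mu_2$), we get $z_n^1=z_n^2=0$ for all $n$, hence $z=0$. The same argument with $\{V_1,V_2\}$ (a basis since $\mu_1\neq\mu_2$) gives completeness of $\mathcal{B}^*$.

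\smallskip
\textbf{Riesz basis property.} The key observation is that $\mathcal{B}$ is the image of the orthonormal basis $\{(\Phi_{\nu_\alpha,n},0),(0,\Phi_{\nu_\alpha,n})\}_{n\ge1}$ of $L^2(0,1)^2$ under the bounded invertible operator $\mathcal{T}$ acting as the constant matrix $[U_1\,|\,U_2]$ on the $\mathbb{R}^2$-component (i.e. $\mathcal{T}(u,v) = u\,U_1 + v\,U_2$ pointwise in $x$), whose inverse is bounded because the matrix $[U_1\,|\,U_2]$ is invertible. The image of an orthonormal basis under a bounded invertible operator is a Riesz basis, so $\mathcal{B}$ is a Riesz basis of $L^2(0,1)^2$; similarly $\mathcal{B}^*$ via $[V_1\,|\,V_2]$. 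Combined with step 1, they are biorthogonal Riesz bases. I expect this to be the conceptually cleanest step once the right change of variables is identified.

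\smallskip
\textbf{The $H^1_\alpha$/$H^{-1}_\alpha$ statement.} For item 4, recall from the scalar spectral theory that $\{\Phi_{\nu_\alpha,n}/\sqrt{\lambda_{\nu_\alpha,n}}\}$ (equivalently $\{\Phi_{\nu_\alpha,n}\}$ after rescaling) is an orthogonal basis of $H^1_\alpha(0,1)$, and dually $\{\sqrt{\lambda_{\nu_\alpha,n}}\,\Phi_{\nu_\alpha,n}\}$ is an orthogonal basis of $H^{-1}_\alpha(0,1)$ — this follows because $-(x^\alpha\cdot')'$ is the canonical isomorphism $H^1_\alpha\to H^{-1}_\alpha$ with eigenvalues $\lambda_{\nu_\alpha,n}$ and the convergence $\sum 1/\lambda_{\nu_\alpha,n}<\infty$ is not even needed here, only the spectral decomposition. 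Then $\mathcal{B}^* = [V_1\,|\,V_2]$ applied componentwise to $\{(\Phi_{\nu_\alpha,n},0),(0,\Phi_{\nu_\alpha,n})\}$, and the same "bounded invertible image of a basis" argument promotes this to a basis of $H^1_\alpha(0,1)^2$; the duality pairing $\langle\psi_n^{(i)},\Psi_m^{(j)}\rangle_{H^{-1}_\alpha,H^1_\alpha}$ extends the $L^2$ pairing computed in step 1 (since the eigenfunctions are smooth enough), giving $\delta_{ij}\delta_{nm}$, so $\mathcal{B}$ is the biorthogonal basis in $H^{-1}_\alpha(0,1)^2$.

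\smallskip
\textbf{Main obstacle.} The routine linear-algebra identities $U_i\cdot V_j=\delta_{ij}$ and the invertibility of $[U_1|U_2]$, $[V_1|V_2]$ are trivial. The genuine care is needed in item 4: one must first pin down precisely in what sense $\{\Phi_{\nu_\alpha,n}\}$ (suitably normalized) is a basis of $H^1_\alpha$ and of $H^{-1}_\alpha$, and verify that the pairing used to define "biorthogonal in $H^{-1}_\alpha$" is the continuous extension of the $L^2$ inner product, so that the computation from step 1 survives. Since the paper states the proof "closely follows" \cite{AHSS2021,ASS2022} for analogous singular/degenerate operators, I would organize the write-up so that the scalar $H^1_\alpha$–$H^{-1}_\alpha$ spectral facts are quoted from there, and only the (short) vectorial reduction via the constant matrices $[U_1|U_2]$ and $[V_1|V_2]$ is carried out explicitly.
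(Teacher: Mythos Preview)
Your proof is correct. Items 1 and 2 match the paper's argument essentially verbatim: the paper also computes $(U_i)^{tr}V_j=\delta_{ij}$ directly and then, for completeness, writes $(f_{1,n},f_{2,n})[U_1|U_2]=0$ and invokes $\det[U_1|U_2]\neq0$.

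Where you diverge is item 3. You obtain the Riesz basis property by recognizing $\mathcal{B}$ as the image of the orthonormal basis $\{(\Phi_{\nu_\alpha,n},0),(0,\Phi_{\nu_\alpha,n})\}$ under the bounded invertible constant-matrix operator $[U_1|U_2]$, which is the textbook definition of a Riesz basis. The paper instead appeals to \cite[Theorem~7.13]{chris}, the characterization ``complete Bessel sequence with a complete Bessel biorthogonal system'', and then verifies the Bessel bounds $\sum_n(|\langle f,\psi_n^{(1)}\rangle|^2+|\langle f,\psi_n^{(2)}\rangle|^2)\le C\sum_n(|f_{1,n}|^2+|f_{2,n}|^2)$ by hand. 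Your route is shorter and avoids this computation; the paper's route has the minor advantage that the Bessel constants are made explicit, but nothing downstream uses them. For item 4, the paper invokes \cite[Theorem~5.12]{chris} and checks completeness, minimality (via \cite[Lemma~5.4]{chris}), and convergence of the expansion $\sum_n\langle\psi_n^{(i)},f\rangle\Psi_n^{(i)}$ in $H^1_\alpha(0,1)^2$ by reducing to the scalar fact that $(\Phi_{\nu_\alpha,n})$ is an orthogonal basis of $H^1_\alpha(0,1)$; your ``image of a basis under a bounded invertible map'' argument compresses the same content, and your remark about the duality pairing extending the $L^2$ pairing is exactly what the paper writes as $\langle\psi_n^{(i)},\Psi_k^{(j)}\rangle_{H_\alpha^{-1},H_\alpha^1}=\langle\psi_n^{(i)},\Psi_k^{(j)}\rangle$.
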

For the proofs of Propositions \ref{propertieseigenfLL} and \ref{rieszbasisresult}, we refer to Appendix \ref{appendix:a}.

Throughout this paper, we use the following notation. Given \( z \in \mathbb{C} \), \( \Re(z) \) and \( \Im(z) \) denote the real and imaginary parts of \( z \), respectively.

\section{Boundary controllability of the linear system \eqref{bound-sys}}\label{Section-approx}
This section is devoted to providing null controllability results for system \eqref{bound-sys}. To this end, it is important to highlight the key technical tools used to address this problem. By transforming the controllability problem into a moment problem, the existence of a biorthogonal family will play a crucial role in deriving an explicit control function, as well as in obtaining an estimate of the control cost. The following theorem establishes a connection between the presence of biorthogonal sequences to complex exponentials, along with the gap condition between eigenvalues. See, for instance, \cite{AHSS2021, ASS2022, ABGT2011, FBT}. In particular,  one has
\begin{theorem}[\cite{BBGBO14}]\label{thm-biorth}
Let $ \{\Lambda_n\}_{n\geq1} $ be a sequence of complex numbers fulfilling the following assumptions:
\begin{enumerate}
\item $\Lambda_n \neq \Lambda_m$ for all $n, m \geq 1$ with $n \neq m$;
\item $\Re (\Lambda_n) > 0$ for every $n \geq 1$;
\item for some $ \delta >0$
$$|\Im (\Lambda_n)| \leq \delta \sqrt{\Re (\Lambda_n)} \quad \forall n \geq 1;$$
\item $ \{\Lambda_n\}_{n\geq1} $ is nondecreasing in modulus,
$$ |\Lambda_n| \leq |\Lambda_{n+1}| \quad \forall n \geq 1; $$
\item $ \{\Lambda_n\}_{n\geq1} $ satisfies the following gap condition: for some $\varrho, q >0 $,
\begin{equation}\label{strong-gap}
\left\{
\begin{array}{l}
|\Lambda_n - \Lambda_m| \geq \varrho |n^2 - m^2| \quad \forall n, m: |n -m|\geq q, \\
\inf\limits_{n\neq m, \; |n -m|<q}|\Lambda_n - \Lambda_m| > 0;
\end{array}\right.
\end{equation}
\item for some $p, s> 0,$
\begin{equation}\label{ineq-counting}
| p \sqrt{r} - \mathcal{N}(r)| \leq s, \quad \forall r >0,
\end{equation}
where $\mathcal{N}$ is the counting function associated with the sequence
$ \{\Lambda_n\}_{n\geq1} $, that is the function defined by
\begin{equation*}
\mathcal{N}(r) = \#\{ n : \, |\Lambda_n| \leq r \}, \qquad \forall r>0.
\end{equation*}
\end{enumerate}
Then, there exists $\widetilde{T}_0 > 0 $, such that for any $T\in (0,\widetilde{T}_0)$, we can find a family $\{q_n \}_{n\geq 1} \subset L^2(-T/2, T/2) $ biorthogonal to $\{e^{- \Lambda_n t}\}_{n\geq 1}$ i.e., a family $\{q_n \}_{n\geq 1} $ in $L^2(-T/2, T/2)$ such that
\begin{align}\label{biorthogonality}
\int_{-T/2}^{T/2} q_n(t) e^{- \Lambda_m t} \, dt = \delta_{nm}.
\end{align}
Moreover, there exists a positive constant $ C > 0 $  independent of $T$ for which
\begin{equation}\label{boundqni}
\|q_n \|_{L^2(-T/2, T/2)} \leq C e^{C\sqrt{\Re (\Lambda_n)} + \frac{C}{T}}, \qquad \forall n \geq 1.
\end{equation}
\end{theorem}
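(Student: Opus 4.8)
The plan is to follow the complex-analytic construction of Fattorini and Russell, in the sharpened form developed for parabolic control-cost estimates (see \cite{BBGBO14, AHSS2021, ABGT2011, FBT}). \textbf{First} I would rephrase the biorthogonality requirement as an interpolation problem for entire functions. By the Paley--Wiener theorem, a function $q_n\in L^2(-T/2,T/2)$ with $\int_{-T/2}^{T/2}q_n(t)\,e^{-\Lambda_m t}\,dt=\delta_{nm}$ for all $m\ge1$ exists precisely when there is an entire function $\Theta_n$ such that (i) $|\Theta_n(z)|\lesssim e^{(T/2)|\Re z|}$ (exponential type $T/2$), (ii) $\Theta_n$ is square-integrable on the imaginary axis, and (iii) $\Theta_n(\Lambda_m)=\delta_{nm}$ for every $m$; in that case $q_n$ is the inverse Laplace transform of $\Theta_n$ and $\|q_n\|_{L^2(-T/2,T/2)}\asymp\|\Theta_n\|_{L^2(i\mathbb{R})}$, so the cost bound \eqref{boundqni} reduces to estimating $\|\Theta_n\|_{L^2(i\mathbb{R})}$.

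\textbf{Second} I would build the Weierstrass product $P(z)=\prod_{k\ge1}\bigl(1-z/\Lambda_k\bigr)$ and estimate it. Hypothesis~6 forces $|\Lambda_k|\asymp(k/p)^2$, so $\sum_k 1/|\Lambda_k|<\infty$ (compare Lemma~\ref{gap-cv-series}) and the product defines an entire function of order $1/2$; in particular it has exponential type $0$, so any factor built from $P$ automatically satisfies (i) for every $T>0$. From the same asymptotics one gets an upper bound $|P(iy)|\lesssim e^{c|y|^{1/2}}$ along the imaginary axis, hypotheses~2--4 ensuring that the $\Lambda_k$ stay in a fixed parabolic neighbourhood of $\mathbb{R}_+$ so that the branch of $z^{1/2}$ and the contour integrals involved are legitimate. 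The delicate point is the matching \emph{lower} bound on $|P'(\Lambda_n)|=|\Lambda_n|^{-1}\prod_{k\ne n}\bigl|1-\Lambda_n/\Lambda_k\bigr|$: the finitely many near-diagonal terms $|k-n|<q$ are handled by the separation $\inf_{0<|k-n|<q}|\Lambda_k-\Lambda_n|>0$, and the tail $|k-n|\ge q$ by the quadratic gap \eqref{strong-gap}; together these give $|P'(\Lambda_n)|^{-1}\le C\,e^{C\sqrt{\Re\Lambda_n}}$, which is exactly the origin of the $e^{C\sqrt{\Re\Lambda_n}}$ factor in \eqref{boundqni}.

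\textbf{Third} I would regularise and assemble. The function $z\mapsto P(z)\big/\bigl(P'(\Lambda_n)(z-\Lambda_n)\bigr)$ interpolates $\delta_{nm}$ at the $\Lambda_m$ but is not square-integrable on $i\mathbb{R}$ because of the $e^{c|y|^{1/2}}$ growth of $P$; so I would multiply it by a mollifier $M_n$ — an explicit entire function of arbitrarily small exponential type (for instance the Laplace transform of a compactly supported Gevrey-$2$ function, with parameters depending on $T$) whose restriction to $i\mathbb{R}$ decays faster than $e^{-c|y|^{1/2}}$ — and set $\Theta_n=\dfrac{P(z)\,M_n(z)}{P'(\Lambda_n)\,(z-\Lambda_n)\,M_n(\Lambda_n)}$. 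Then $\Theta_n$ is entire, of type $\le T/2$, lies in $L^2(i\mathbb{R})$, and meets the interpolation conditions, so $q_n$ is the sought biorthogonal element; combining the bounds of Step~2 with the estimates for $M_n$ on $i\mathbb{R}$ and for $|M_n(\Lambda_n)|^{-1}$ yields \eqref{boundqni}, the factor $e^{C/T}$ and the threshold $\widetilde{T}_0$ coming from the $T$-dependence of the mollifier. \textbf{The main obstacle} is precisely this cost bookkeeping: one must upgrade the trivial bound $|\Theta_n(\Lambda_n)|\lesssim e^{(T/2)\Re\Lambda_n}$ to the far sharper $e^{C\sqrt{\Re\Lambda_n}}$, which forces full use of the product structure and of the gap hypothesis in its weak form ($|k-n|\ge q$ only, so the near-diagonal block must be controlled by hand), while simultaneously choosing the mollifier so that its sub-exponential decay beats the growth of $P$, its type stays below $T/2$, and its value at $\Lambda_n$ degrades at most like $e^{C/T}$ — keeping every constant explicit, uniform in $n$, and with the claimed separate dependence on $n$ and on $T$.
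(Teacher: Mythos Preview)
The paper does not give its own proof of this theorem: it is quoted verbatim as a tool from \cite{BBGBO14} and used as a black box in the proof of Theorem~\ref{Thm-null-cont1}, so there is nothing in the present paper to compare your argument against. That said, your outline is precisely the Fattorini--Russell/Paley--Wiener strategy carried out in \cite{BBGBO14} (and in the earlier \cite{ABGT2011, FBT} under stronger gap hypotheses): reduction to an interpolation problem via Paley--Wiener, construction of the Weierstrass product $P$ with the order-$1/2$ growth coming from hypothesis~6, the lower bound on $|P'(\Lambda_n)|$ from the two-part gap condition~5, and the Gevrey-type mollifier providing the $e^{C/T}$ factor and the small-time threshold $\widetilde T_0$. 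Your identification of the ``main obstacle'' --- the bookkeeping needed to get $e^{C\sqrt{\Re\Lambda_n}}$ rather than $e^{(T/2)\Re\Lambda_n}$, and the separate handling of the near-diagonal block $|k-n|<q$ --- is exactly the technical heart of \cite{BBGBO14}, so your plan is sound and faithful to the original source.
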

 
As it is well known, the controllability of system \eqref{bound-sys} can be characterized in terms of appropriate properties of the solutions of its corresponding homogeneous adjoint problem (see for instance \cite[Theorem. 2.1]{ABGT2011}) defined as
\begin{equation}\label{adjoint system-homog}
\left\{\begin{array}{lll}
-\varphi_{t}-( x^{\alpha} \varphi_x )_x =A^{*}\varphi & & \text { in } Q_{T} \\
\varphi(t, 1)=0 & & t \in (0, T)\\
\left\{\begin{array}{l}
\varphi(t, 0)=0, \quad 0 \leq \alpha<1 \\
x^{\alpha} \varphi_{x}(t, 0)=0, \quad 1 \leq \alpha<2
\end{array}\right. & & \text { on }(0, T) \\
\varphi(T,\cdot)=\varphi_{0} & & \text { in }(0, 1).
\end{array}\right.
\end{equation}
Note that system \eqref{adjoint system-homog} above is in particular the adjoint system \eqref{adjoint system} with $g=(0,0)$.

In order to provide these characterizations, we use the following result which gives a relation between the solutions of systems \eqref{bound-sys} and \eqref{adjoint system-homog} (For a proof, see for instance \cite{FBT}).
\begin{proposition}\label{porp dual and control}
Let $B$ the matrix given by \eqref{Vec-sys-matrix}. Let us consider $y_0 \in H_\alpha^{-1}(0,1)^2$, $v \in L^2(0, T)$ and $\varphi_0 \in H_\alpha^1(0,1)^2$. Then, the solution $y$ of system \eqref{bound-sys} associated to $y_0$ and $v$, and the solution $\varphi$ of the adjoint system \eqref{adjoint system-homog} associated to $\varphi_0$ satisfy
$$
\int_0^T v(t) B^*\left(x^\alpha \varphi_x\right)(t, 1) d t=-\left\langle y(T), \varphi_0\right\rangle_{H_\alpha^{-1}, H_\alpha^1}+\left\langle y_0, \varphi(0, \cdot)\right\rangle_{H_\alpha^{-1}, H_\alpha^1}.
$$
\end{proposition}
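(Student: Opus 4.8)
The plan is to prove the identity first for sufficiently regular data, where every manipulation below is legitimate, and then to recover the general case by a density argument. Fix $\varphi_0\in H^1_\alpha(0,1)^2$ and let $\varphi$ be the corresponding solution of \eqref{adjoint system-homog}; by Proposition \ref{prop.2.1} (applied to the backward problem with $g=0$) it belongs to $L^2(0,T;H^2_\alpha(0,1)^2)\cap C^0([0,T];H^1_\alpha(0,1)^2)$ and, from the equation, $\varphi_t=-(x^\alpha\varphi_x)_x-A^*\varphi\in L^2(0,T;L^2(0,1)^2)$; in particular $x^\alpha\varphi_x\in L^2(0,T;H^1(0,1)^2)$, so the trace $t\mapsto (x^\alpha\varphi_x)(t,1)$ is a well-defined element of $L^2(0,T)^2$ by the embedding $H^1(0,1)\hookrightarrow C([0,1])$. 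With $\varphi$ and $\varphi_0$ held fixed, both sides of the claimed identity depend continuously on $(y_0,v)\in H^{-1}_\alpha(0,1)^2\times L^2(0,T)$: the left-hand side is the pairing of $v$ with the fixed $L^2(0,T)$ function $B^*(x^\alpha\varphi_x)(\cdot,1)$, while the right-hand side depends continuously on $y_0$ and on $y(T)$, the latter depending continuously on $(y_0,v)$ by estimate \eqref{bounds.prop.2.2} of Proposition \ref{well.posed.transp}. Since the pairs $(y_0,v)$ with $y_0\in H^1_\alpha(0,1)^2$ and $v\in H^1(0,T)$, $v(0)=0$, are dense in $H^{-1}_\alpha(0,1)^2\times L^2(0,T)$, it suffices to establish the identity for such data.

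For such regular data the solution $y$ of \eqref{bound-sys} is a strong solution: writing $y=Y+\rho(x)Bv(t)$, with $\rho$ smooth, $\rho\equiv 1$ near $x=1$ and supported away from the degeneracy, $Y$ solves a system with homogeneous boundary conditions, initial datum $Y(0)=y_0\in H^1_\alpha(0,1)^2$ (since $v(0)=0$) and right-hand side in $L^2(Q_T;\mathbb{R}^2)$, hence $Y\in L^2(0,T;H^2_\alpha(0,1)^2)\cap C^0([0,T];H^1_\alpha(0,1)^2)$ with $Y_t\in L^2(0,T;L^2(0,1)^2)$ by Proposition \ref{prop.2.1} (applied to the forward problem after time reversal). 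Multiplying the first equation of \eqref{bound-sys} by $\varphi$, integrating over $Q_T$, integrating by parts once in $t$ and twice in $x$, and using $Ay\cdot\varphi=y\cdot A^*\varphi$, I would obtain
\begin{equation*}
0=\langle y(T),\varphi_0\rangle-\langle y_0,\varphi(0,\cdot)\rangle-\iint_{Q_T} y\cdot\big(\varphi_t+(x^\alpha\varphi_x)_x+A^*\varphi\big)\,dx\,dt+\mathcal I,
\end{equation*}
where $\mathcal I=-\int_0^T\big[x^\alpha y_x\cdot\varphi-x^\alpha\varphi_x\cdot y\big]_{x=0}^{x=1}\,dt$ collects the spatial boundary contributions. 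The volume integral vanishes because $\varphi$ solves \eqref{adjoint system-homog}, i.e. $\varphi_t+(x^\alpha\varphi_x)_x+A^*\varphi=0$. In $\mathcal I$ the term at $x=1$ reduces to $\int_0^T (x^\alpha\varphi_x)(t,1)\cdot Bv(t)\,dt$ since $\varphi(t,1)=0$ and $y(t,1)=Bv(t)$, and the term at $x=0$ vanishes: in the weakly degenerate case because $y(t,0)=\varphi(t,0)=0$, and in the strongly degenerate case because $(x^\alpha y_x)(t,0)=(x^\alpha\varphi_x)(t,0)=0$. Hence $0=\langle y(T),\varphi_0\rangle-\langle y_0,\varphi(0,\cdot)\rangle+\int_0^T (x^\alpha\varphi_x)(t,1)\cdot Bv(t)\,dt$, and rewriting $(x^\alpha\varphi_x)(t,1)\cdot Bv(t)=v(t)B^*(x^\alpha\varphi_x)(t,1)$ (recall $v$ is scalar) yields exactly the asserted formula.

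The main technical point is the justification of the two spatial integrations by parts and, above all, the vanishing of the boundary terms at the degenerate endpoint $x=0$. This relies on the characterizations of $H^2_\alpha(0,1)$ recalled in Section \ref{Section-prel}: for $u\in H^2_\alpha(0,1)$ the flux $x^\alpha u_x$ lies in $H^1(0,1)$, hence has a trace at $x=0$, which equals $0$ in the strongly degenerate case; in the weakly degenerate case $u$ itself has a trace, equal to $0$ at both endpoints, while $x^\alpha u_x$ stays bounded near $x=0$, so every product entering $\mathcal I$ at $x=0$ tends to $0$ as $x\to 0$. Applying this to $Y$ (hence $y$) and to $\varphi$ makes the computation rigorous for the regular data, and the density argument of the first paragraph then delivers the proposition in full generality.
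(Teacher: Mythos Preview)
The paper does not actually prove this proposition; after stating it, the authors write ``For a proof, see for instance \cite{FBT}'' and move on. Your self-contained argument---prove the identity first for regular data $(y_0,v)\in H^1_\alpha(0,1)^2\times H^1(0,T)$ by lifting the boundary datum, integrating by parts, and using the adjoint equation, then pass to general $(y_0,v)$ by density and the continuity estimate \eqref{bounds.prop.2.2}---is precisely the standard route used in \cite{FBT} for the non-degenerate problem, adapted here to the degenerate setting. It is correct.

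The only spot that would benefit from one more line is the vanishing of the boundary contributions at $x=0$ in the strongly degenerate range $1\le\alpha<2$. There neither $y$ nor $\varphi$ need be bounded near $0$, so the fact that $(x^\alpha u_x)(0)=0$ does not, by itself, kill the products in $\mathcal I$. The missing estimate is elementary: for $u\in H^2_\alpha(0,1)$ the flux $x^\alpha u_x\in H^1(0,1)$ vanishes at $0$, hence $|x^\alpha u_x(x)|\le C\sqrt{x}$; and for $w\in H^1_\alpha(0,1)$ with $w(1)=0$ one has, by Cauchy--Schwarz,
\[
|w(x)|\le \Big(\int_x^1 s^{-\alpha}\,ds\Big)^{1/2}\|s^{\alpha/2}w_s\|_{L^2}\le
\begin{cases} C|\ln x|^{1/2}, & \alpha=1,\\ C\,x^{(1-\alpha)/2}, & 1<\alpha<2,\end{cases}
\]
so $(x^\alpha u_x)(x)\,w(x)\to 0$ as $x\to 0$ in either case. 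This is a routine Hardy-type computation in the degenerate framework and does not affect the validity of your proof.
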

We shall now proceed to prove Theorem \ref{Thm-null-cont1}, which present the first main result.

\begin{proof}[Proof of Theorem \ref{Thm-null-cont1}] To prove Theorem \ref{Thm-null-cont1}, we transform the controllability problem into a moment problem. Using Proposition \ref{porp dual and control}, we deduce that the control $v\in L^2(0,T)$ drives the solution of \eqref{bound-sys} to zero at time $T$ if and only if $v \in L^2(0,T)$ satisfies
\begin{equation}\label{firststep}
\int_0^T B^* (x^{\alpha} \varphi_x) (t,1) \, v(t) \; dt = \langle y_0 , \varphi(0, \cdot) \rangle_{H_{\alpha}^{-1}, H_{\alpha}^{1}}, \qquad \forall \varphi_0 \in H_{\alpha}^1(0, 1)^2
\end{equation}
where $\varphi \in  C^0\big([0, T];  H_{\alpha}^1(0,1)^2 \big) \cap L^2\big(0, T; H^{2}_{\alpha}(0,1)^2\big)$ is the solution of the adjoint system \eqref{adjoint system-homog} associated to $\varphi_0$.

Observe that, using Proposition \ref{rieszbasisresult}, the corresponding solution $\varphi$ of system \eqref{adjoint system-homog} associated to $\varphi_0$ is given by
\begin{equation*}
\varphi(t,x)= \sum_{k\geq 1} \Big(\langle \psi_k^{(1)}, \varphi_0 \rangle_{H_{\alpha}^{-1},H_{\alpha}^1}  \Psi_{k}^{(1)}e^{-\lambda_{\nu_{\alpha},k}^{(1)}(T-t)}
+
\langle \psi_k^{(2)}, \varphi_0 \rangle_{H_{\alpha}^{-1},H_{\alpha}^1}  \Psi_{k}^{(2)} e^{-\lambda_{\nu_{\alpha},k}^{(2)}(T-t)}\Big).
\end{equation*}
Since $\mathcal{B}^*$ is a basis for $H_{\alpha}^1(0,1)^2$, we find that $\varphi(t,x) = \Psi_n^{(i)}(x) e^{-\lambda_{\nu_{\alpha}, n}^{(i)} (T-t)}$ is the solution of system \eqref{adjoint system-homog} associated with $\varphi_0= \Psi_n^{(i)}$.
Therefore, we can deduce that the identity \eqref{firststep} is equivalent to
$$
\int_0^T B^* (x^{\alpha} \Psi_{n,x}^{(i)}) (1) v(t) e^{-\lambda_{\nu_{\alpha}, n}^{(i)} (T-t)} dt =  e^{-\lambda_{\nu_{\alpha}, n}^{(i)} T} \langle y_0 , \Psi_n^{(i)} \rangle_{H_{\alpha}^{-1}, H_{\alpha}^{1}}, \quad  \forall n\geq 1, \quad i=1,2.
$$
Taking
into account the expressions of $\Psi_n^{(i)}$ given by \eqref{eigenf-L*}, we infer that $v\in L^2(0,T)$ is
a null control for system \eqref{bound-sys} associated to $y_0$ if and only if
\begin{align*}
{\small \frac{\sqrt{2} \kappa_{\alpha}^{\frac{3}{2}} j_{\nu_{\alpha}, n} }{|J'_{\nu_{\alpha}}(j_{\nu_{\alpha}, n})|}  J'_{\nu_{\alpha}} (j_{\nu_{\alpha}, n} ) B^* V_{i} \int_0^T v(t) e^{-\lambda_{\nu_{\alpha}, n}^{(i)} (T-t)} dt  =  e^{-\lambda_{\nu_{\alpha}, n}^{(i)} T} \langle y_0 , \Psi_n^{(i)} \rangle_{H_{\alpha}^{-1}, H_{\alpha}^{1}}, \quad  \forall n\geq 1, \quad i=1,2}
\end{align*}
and equivalently, 
\begin{equation}\label{moment}
\int_0^T v(t) e^{-\lambda_{\nu_{\alpha}, n}^{(i)} (T-t)} dt = C_{\nu_{\alpha},n}^{(i)} , \quad \forall n\geq 1, \quad i=1,2,
\end{equation}
where
$C_{\nu_{\alpha},n}^{(i)}$ is given by
\begin{equation}\label{expres1}
C_{\nu_{\alpha},n}^{(i)}=  \frac{ |J'_{\nu_{\alpha}}(j_{\nu_{\alpha}, n})| e^{-\lambda_{\nu_{\alpha}, n}^{(i)} T} }{\sqrt{2} \kappa_{\alpha}^{\frac{3}{2}} j_{\nu_{\alpha}, n}  J'_{\nu_{\alpha}}(j_{\nu_{\alpha}, n} ) B^* V_{i}}  \langle y_0 , \Psi_n^{(i)}\rangle_{H_{\alpha}^{-1}, H_{\alpha}^{1}}, \quad  \forall n\geq 1, \quad i=1,2.
\end{equation}
At this stage, the strategy for solving the moment problem \eqref{moment} is to use the concept of a biorthogonal family. In fact, Proposition \ref{prop-incres-seq} and Theorem \ref{thm-biorth} guarantee the existence of \( \widetilde{T}_0 > 0 \), such that for any \( T \in (0, \widetilde{T}_0) \), there exists a biorthogonal family \( \{q_n^{(1)}, q_n^{(2)}\}_{n \geq 1} \) to \( \{e^{-(\lambda_{\nu_{\alpha}, n}^{(1)} + \mu_2) t}, e^{-(\lambda_{\nu_{\alpha}, n}^{(2)} + \mu_2) t}\}_{n \geq 1} \) in \( L^2(-T/2, T/2) \) which also satisfies:
\begin{equation}\label{boundqni2}
\|q_n^{(i)} \|_{L^2(-T/2,T/2)} \leq C e^{\sqrt{\mathcal{R} (\lambda_{\nu_{\alpha}, n}^{(i)}+\mu_2)} + \frac{C}{T}}, \qquad \forall n \geq 1, \quad i=1,2.
\end{equation}
for some positive constant $C$ independent of $T$.\\
Performing the change of variable \( s = T/2 - t \) in \eqref{moment}, the controllability problem is then reduced to the following moment problem:
Given $y_0 \in  H_{\alpha}^{-1}(0,1)^2$ find $v\in L^2(0,T)$ such that $u(s) = v(T/2- s) e^{\mu_2s} \in L^2(-T/2,T/2)$ satisfies
\begin{equation}\label{moment1}
\int_{-T/2}^{T/2} u(s) e^{-(\lambda_{\nu_{\alpha}, n}^{(i)}+\mu_2)s} ds = \widehat{C}_{\nu_{\alpha},n}^{(i)} , \quad \forall n\geq 1, \quad i=1,2,
\end{equation}
with $\widehat{C}_{\nu_{\alpha},n}^{(i)} = e^{\lambda_{\nu_{\alpha}, n}^{(i)} T/2} C_{\nu_{\alpha},n}^{(i)}$. Then, a formal solution to the moment problem \eqref{moment1} is given by
$$u(s) = \sum_{n\geq1} ( \widehat{C}_{\nu_{\alpha},n}^{(1)} q_n^{(1)}(s)  +  \widehat{C}_{\nu_{\alpha},n}^{(2)} q_n^{(2)}(s) ).$$
Thus,
\begin{equation}\label{nullcontrol_system}
v(s)= \sum_{n\geq1}  \big( \widehat{C}_{\nu_{\alpha},n}^{(1)} q_n^{(1)}(T/2- s)  +  \widehat{C}_{\nu_{\alpha},n}^{(2)} q_n^{(2)}(T/2- s) \big) e^{-\mu_2(T/2- s)}.
\end{equation}
The only remaining point is to prove that $v \in L^2(0,T)$. This comes directly from the estimate \eqref{boundqni2} and the fact that
\begin{equation*}
\| \Psi_n^{(i)}\|_{ H_{\alpha}^{1}}=\|V_{i} \Phi_{\nu_{\alpha},n}\|_{ H_{\alpha}^{1}}  \leq C \sqrt{\lambda_{\nu_{\alpha}, n}}= C \kappa_{\alpha} j_{\nu_{\alpha}, n},\quad \forall n\geq 1,\quad i=1,2,
\end{equation*}
for some positive constant $C$. Indeed, the previous inequality implies
\begin{align*}
|\langle y_0 , \Psi_n^{(i)}\rangle_{H_{\alpha}^{-1}, H_{\alpha}^{1}}|  \leq  \|y_0 \|_{H_{\alpha}^{-1}}\|\Psi_n^{(i)}\|_{ H_{\alpha}^{1}} \leq C \kappa_{\alpha} j_{\nu_{\alpha}, n} \|y_0 \|_{ H_{\alpha}^{-1}},\quad  \forall n\geq 1,\quad i=1,2.
\end{align*}
Moreover, from \eqref{expres1}, by using the expressions of $\kappa_{\alpha}$, we obtain
\begin{equation}\label{estinCnu}
|\widehat{C}_{\nu_{\alpha},n}^{(i)}| \leq \frac{C}{\sqrt{2-\alpha}} e^{-\lambda_{\nu_{\alpha}, n}^{(i)} T/2}  \|y_0 \|_{ H_{\alpha}^{-1}},\quad  \forall n\geq 1,\quad i=1,2.
\end{equation}
Now, taking into account the definition of $\lambda_{\nu_{\alpha}, n}$, we get for a new constants $C$ not depending on $n$ and $T$
$$|e^{-\lambda_{\nu_{\alpha}, n}^{(i)} T/2} | \leq e^{CT} e^{-\lambda_{\nu_{\alpha}, n} T/2}\quad $$
and
\begin{equation}
\sqrt{\mathcal{R}(\lambda_{\nu_{\alpha}, n}^{(i)}+\mu_2)} \leq  C\sqrt{\lambda_{\nu_{\alpha}, n}}, \quad \forall n \geq 1.
\end{equation}
Coming back to the expression \eqref{nullcontrol_system} of the null control $v$ and using the previous estimates, we get
\begin{equation}\label{step2}
\|v\|_{L^2(0,T)} \leq  \frac{C e^{CT}}{\sqrt{2-\alpha}} \|y_0 \|_{ H_{\alpha}^{-1}} \sum_{n\geq1}  e^{-\lambda_{\nu_{\alpha}, n} T/2} e^{C\sqrt{\lambda_{\nu_{\alpha}, n}} + \frac{C}{T}}.
\end{equation}
Using Young's inequality,
$$ C\sqrt{\lambda_{\nu_{\alpha}, n}} \leq \frac{\lambda_{\nu_{\alpha}, n}T}{4}+ \frac{C^2}{T}$$
we see that
\begin{equation*}
\|v\|_{L^2(0,T)} \leq  \frac{C e^{C T+ \frac{C}{T}}}{\sqrt{2-\alpha}} \|y_0 \|_{ H_{\alpha}^{-1}} \sum_{n\geq1}  e^{-\lambda_{\nu_{\alpha}, n} T/4}.
\end{equation*} 
On the other hand, by \eqref{boundinf12} and \eqref{boundsup12}, it can be easily checked that there exist a constant  $C> 0$ such that
$$
C \kappa_{\alpha}^2 n^2 \leq  \lambda_{\nu_{\alpha}, n} = \kappa_{\alpha}^2 j_{\nu_{\alpha}, n}^2, \quad \forall n \geq 1.
$$
Finally,
\begin{align*}
\|v\|_{L^2(0,T)} &\leq  \frac{C}{\sqrt{2-\alpha}} e^{C T+ \frac{C}{T}} \|y_0 \|_{ H_{\alpha}^{-1}} \sum_{n\geq1}  e^{-C \kappa_{\alpha}^2 n^2 T} \\
&\leq  \frac{C}{\sqrt{2-\alpha}} e^{C T+ \frac{C}{T}} \|y_0 \|_{ H_{\alpha}^{-1}} \int_{0}^{\infty} e^{-C \kappa_{\alpha}^2 T s^2}\,ds \\
&= \frac{C}{(2-\alpha)^{\frac{3}{2}}} e^{C T+ \frac{C}{T}} \|y_0 \|_{ H_{\alpha}^{-1}} \sqrt{\frac{\pi}{T}}\\
&\leq C e^{C T+ \frac{C}{T}} \|y_0 \|_{ H_{\alpha}^{-1}},
\end{align*}
where $C$ independent of $T$. This inequality shows that
$v \in L^2(0, T)$ and yields the desired estimate on the null control in the case where $T<\widetilde{T}_0$. On the other hand, when $T\geq \widetilde{T}_0$, actually reduced to the previous one. Indeed, any continuation by zero of a control on $\left(0, \widetilde{T}_0 / 2\right)$ is a control on $(0, T)$ and the estimate follows from the decrease of the cost with respect to the time. To be more precise, we define the operator 
\begin{align}\label{operator C0}
\begin{aligned}
\mathcal{C}_{T}^{(0)}: H^{-1}_\alpha\left(0, 1 ; \mathbb{R}^{2}\right) &\rightarrow L^{2}(0, T)\\
y_0 &\mapsto \mathcal{C}_{T}^{(0)}\left(y_{0}\right):=\sum_{k \geq 1}\left(c_{k 1}\left(y_{0}\right) q_{k}^{(1)}(T-\cdot)+c_{k 2}\left(y_{0}\right) q_{k}^{(2)}(T-\cdot)\right), 
\end{aligned}
\end{align}

it suffices to set the null control function to $0$ for the time interval $(\widetilde{T}_0/2,T )$. Indeed, $v$ is given by
$$
v(t)=\mathcal{C}_{T}^{(0)}\left(y_{0}\right)(t):= \begin{cases}\mathcal{C}_{\widetilde{T}_{0} / 2}^{(0)}\left(y_{0}\right)(t) & \text { if } t \in\left[0, \frac{\widetilde{T}_{0}}{2}\right] \\[.1cm] 0 & \text { if } t \in\left[\frac{\widetilde{T}_{0}}{2}, T\right] .\end{cases}
$$
Clearly $\mathcal{C}_{T}^{(0)} \in \mathcal{L}\left(H^{-1}_\alpha\left(0, 1 ; \mathbb{R}^{2}\right), L^{2}(0, T)\right)$ and consequently, the following estimate follows
\begin{equation}
\|v\|_{L^2(0,T)} = \left\|\mathcal{C}_{T}^{(0)}\left(y_{0}\right)\right\|_{L^{2}(0, T)} \leq C_0 e^{\frac{2M}{\widetilde{T}_0}} \|y_0 \|_{ H_{\alpha}^{-1}}=C_{1}\left\|y_{0}\right\|_{H^{-1}_\alpha}.
\end{equation}
So, we can conclude \eqref{OperatorC0bound} for a new positive constant $C_{0}$. This completes the proof of Theorem \ref{Thm-null-cont1}.
\end{proof}

\section{Boundary controllability of the non linear system}\label{Section-null}
In this section, we address the main result of this work, namely the controllability of the nonlinear system \eqref{bound-nonlinear-sys}. To this end, we employ a fixed-point strategy that fundamentally relies on a null controllability result for the non-homogeneous linear system \eqref{bound-nonhomog-sys}, with a source term \( f \in L^{2}\left(0, 1 ; \mathbb{R}^{2}\right) \) given in an appropriate weighted Lebesgue space; see \eqref{weighted normed spaces}.
\subsection{Null controllability of the non-homogeneous system \eqref{bound-nonhomog-sys}}
As said before, our next objective will be to show a null controllability result for \eqref{bound-nonhomog-sys} when $y_{0} \in H^{-1}_\alpha(0,1)^2$ and $f$ is a given function satisfying appropriate assumptions.

To be more precise, we will give the null controllability result of the following system (given before in \eqref{bound-nonhomog-sys}), based on some ideas from \cite{Burgos_2020, Liu_2012}:
\begin{equation}
\left\{
\begin{array}{lll}
\partial_{t}y - ( x^{\alpha}  y_x )_x - A y = f ,  &  & \text{in} \;  Q_T:=(0, \, T)\times(0, 1), \\
y(t,1)= B v(t), & & \text{in} \; (0, \, T),  \\
\begin{cases}
& y(t, 0) = 0,   \qquad   \, 0 \leq \alpha < 1 \\
&  x^{\alpha} y_x  (t, 0)= 0 , \quad 1\leq \alpha < 2 \\
\end{cases}  & & t \in (0, T),\\
y(0,x)= y_{0}(x),  & & \text{in}  \;  (0, 1),
\end{array}
\right.
\end{equation}

Thanks to Theorem \ref{Thm-null-cont1}, we obtain an estimate for the cost of the null control of system \eqref{bound-sys}, which is defined by 
$$
\mathcal{K}(T) =\sup\limits_{\| y_0 \|_{H^{-1}_{\alpha}}=1}\left( \inf\limits_{ v \text{ admissible } }\| v \|_{L^2(0,T)} \right),\quad \forall T>0.
$$
One has
$$
\mathcal{K}(T) \leq C_{0} e^{\frac{M}{T}}, \quad \forall T>0
$$
with $C_{0}$ and $M$ two positive constants only depending on the parameters of system \eqref{bound-sys}.

In order to obtain a null controllability result for the non-homogeneous problem \eqref{bound-nonhomog-sys} at time $T>0$, let us introduce the following continuous non increasing function
\begin{align}
\gamma(t):=e^{\frac{M}{t}}, \forall t>0
\end{align}
and, for $t \in[0, T]$
\begin{align}\label{weight functions}
\rho_{\mathcal{F}}(t):=e^{-\frac{q^{2}(p+1) M}{(q-1)(T-t)}}, \quad \rho_{0}(t):=e^{-\frac{p M}{(q-1)(T-t)}}, \quad \forall t \in\left[T\left(1-\frac{1}{q^{2}}\right), T\right].
\end{align}
\begin{remark}
The above two functions $\rho_{\mathcal{F}}$ and $\rho_{0}$ given in \eqref{weight functions}, extended into $\left[0, T\left(1-1 / q^{2}\right)\right]$ in a constant way, where $p, q>1$ are two constants that will be well chosen later, for a technical role in the next proofs. Notice that $\rho_{\mathcal{F}}$ and $\rho_{0}$ are continuous and non increasing functions in $[0, T]$ and $\rho_{\mathcal{F}}(T)=\rho_{0}(T)=0$.
\end{remark}
Before presenting the null controllability result for the system \eqref{bound-nonhomog-sys}, we associate to the functions $\rho_{\mathcal{F}}$ and $\rho_{0}$ the Banach space $\mathcal{H}$ and the Hilbert spaces $\mathcal{F}, \mathcal{V}$ and $\mathcal{H}_{0}$ defined by :
\begin{align}\label{weighted normed spaces}
\mathcal{F}:&=\left\{f \in L^{2}\left(Q_{T} ; \mathbb{R}^{2}\right): \frac{1}{\rho_{\mathcal{F}}}f \in L^{2}\left(Q_{T} ; \mathbb{R}^{2}\right)\right\}, \\
\mathcal{V}:&=\left\{v \in L^{2}(0, T): \frac{1}{\rho_{0}}v \in L^{2}(0, T)\right\}, \\
\mathcal{H}_{0}:&=\left\{y \in L^{2}\left(Q_{T} ; \mathbb{R}^{2}\right): \frac{1}{\rho_{0}} y \in L^{2}\left(Q_{T} ; \mathbb{R}^{2}\right)\right\} ,\\
\mathcal{H} :&=\left\{y \in L^{2}\left(Q_{T} ; \mathbb{R}^{2}\right): \frac{1}{\rho_{0}}y \in L^{2}\left(Q_{T}\right) \times C^{0}\left(\overline{Q}_{T}\right)\right\} .
\end{align}

The inner product in $\mathcal{F}$ is given by
\begin{align}\label{scalar prdct}
\left(f_{1}, f_{2}\right)_{\mathcal{F}}:=\iint_{Q_{T}} \rho_{\mathcal{F}}^{-2}(t) f_{1}(t, x) \cdot f_{2}(t, x) d x d t, \quad \forall f_{1}, f_{2} \in \mathcal{F},
\end{align}
where the notation $\displaystyle f_1 . f_2$ represents the scalar product in $\mathbb{R}^2$. While a similar definition as in \eqref{scalar prdct}, can be made for $(\cdot, \cdot)_{\mathcal{V}}$ and $(\cdot, \cdot)_{\mathcal{H}_{0}} .$

On the other hand, $\mathcal{H}$ is endowed with the norm
$$
\|y\|_{\mathcal{H}}:=\left(\left\|y_{1} / \rho_{0}\right\|_{L^{2}\left(Q_{T}\right)}^{2}+\left\|y_{2} / \rho_{0}\right\|_{C^{0}\left(\overline{Q}_{T}\right)}^{2}\right)^{1 / 2}, \quad \forall y=\left(y_{1}, y_{2}\right) \in \mathcal{H}.
$$
It is clear that $\mathcal{F}, \mathcal{V}$ and $\mathcal{H}_{0}$ are Hilbert spaces and $\mathcal{H}$ is a Banach space with the norm $\|.\|_{\mathcal{H}}$.

We are now in a position to prove the main result in this section which is Theorem \ref{thm-operators}.
\begin{proof}[\textbf{Proof of Theorem \ref{thm-operators}}]
 In order to prove Theorem \ref{thm-operators}, we adapt to system  \eqref{bound-nonhomog-sys} a general technique developed in \cite{Liu_2012} that permits to prove a null controllability result for a non-homogenous linear problem from the corresponding controllability result for the homogenous problem. \\

Let us consider $p, q>1$ and $T>0$. We define the sequence
\begin{equation}\label{Tk}
T_{k}=T-\frac{T}{q^{k}}, \quad \forall k \geq 0
\end{equation}
From the previous formula \eqref{Tk} and the definition of $\rho_{0}$ and $\rho_\mathcal{F}$ given in \eqref{weight functions}, one has
\begin{equation}\label{weight functions link}
\rho_{0}\left(T_{k+2}\right)=\rho_{\mathcal{F}}\left(T_{k}\right) e^{\frac{M}{T_{k+2}-T_{k+1}}}, \quad \forall k \geq 0
\end{equation}

Let us now take $y_{0} \in H_{\alpha}^{-1}(0,1)^2$ (resp., $\left.y_{0} \in H^{-1}_{\alpha}(0, 1) \times H_{\alpha}^{1}(0, 1)\right)$ and $f \in \mathcal{F}$.\\ Thus, we introduce the sequence $\left\{a_{k}\right\}_{k \geq 0} \subset H_{\alpha}^{-1}(0,1)^2$ (resp. $\left\{a_{k}\right\}_{k \geq 0} \subset H^{-1}_{\alpha}(0, 1) \times$ $H_{\alpha}^{1}(0, 1)$ if $\left.y_{0} \in H^{-1}_{\alpha}(0, 1) \times H_{\alpha}^{1}(0, 1)\right)$ defined by
\begin{equation}\label{ak}
a_{0}=y_{0}, \quad a_{k+1}=\tilde{y}_{k}\left(T_{k+1}^{-}\right), \quad \forall k \geq 0
\end{equation}
where $\tilde{y}_{k}$ is the solution to the linear system
\begin{equation}\label{linear system y tilde}
\begin{cases}\tilde{y}_{t}-( x^{\alpha}  \tilde{y}_x )_x-A \tilde{y}=f & \text { in }(0, 1) \times\left(T_{k}, T_{k+1}\right):=Q_{k} \\ \tilde{y}(0, \cdot)=\tilde{y}(1, \cdot)=0 & \text { on }\left(T_{k}, T_{k+1}\right) \\ \tilde{y}\left(\cdot, T_{k}^{+}\right)=0 & \text { in }(0, 1)\end{cases}
\end{equation}
where $A$ is the matrix given in \eqref{Vec-sys-matrix}.

From Proposition \ref{prop.2.1}, it is clear that this system admits a unique solution
$$
\tilde{y}_{k} \in L^{2}\left(T_{k}, T_{k+1} ; H^{2}_{\alpha}(0, 1)\right) \cap C^{0}\left(\left[T_{k}, T_{k+1}\right] ; H_{\alpha}^{1}(0,1)^2\right)
$$
which satisfies \eqref{bound phi prop 2.1}. In particular, $\tilde{y}_{k} \in C^{0}\left(\overline{Q}_{k} ; \mathbb{R}^{2}\right)$ and $a_{k+1} \in H_{\alpha}^{1}(0,1)^2$, for any $k \geq 0$, and
\begin{equation}\label{bound y tilde}
\left\|\tilde{y}_{k}\right\|_{C^{0}\left(\overline{Q}_{k} ; \mathbb{R}^{2}\right)}+\left\|a_{k+1}\right\|_{H_{\alpha}^{1}} \leq e^{C T}\|f\|_{L^{2}\left(Q_{k} ; \mathbb{R}^{2}\right)}, \quad \forall k \geq 0
\end{equation}
where $C$ is a positive constant only depending on the coefficients of $A.$
For $k \geq 0$, we also consider the controlled autonomous problem
\begin{equation}\label{linear system y hat}
\left\{\begin{array}{lll}
\hat{y}_{t}- \left(x^\alpha\hat{y}_{x}\right)_x-A \hat{y}=0 & \text { in } Q_{k} \\
\hat{y}(0, \cdot)=0, \quad \hat{y}(1, \cdot)=B v_{k} & \text { on }\left(T_{k}, T_{k+1}\right) \\
\hat{y}\left(\cdot, T_{k}^{+}\right)=a_{k},\quad  \hat{y}\left(\cdot, T_{k+1}^{-}\right)=0 & \text { in }(0, 1)
\end{array}\right.
\end{equation}
where the control $v_{k}$ is given by $v_{k}=\mathcal{C}_{T_{k+1}-T_{k}}^{(0)}\left(a_{k}\right) \in L^{2}\left(T_{k}, T_{k+1}\right)$ (the linear operator $\mathcal{C}_{T_{k+1}-T_{k}}^{(0)}$ is given in Theorem \ref{Thm-null-cont1}). Thanks to Proposition \ref{well.posed.transp}, the solution $\hat{y}_{k}$ of the previous system satisfies
$$
\begin{array}{l}
\hat{y}_{0} \in L^{2}\left(Q_{0} ; \mathbb{R}^{2}\right) \quad\left(\text { resp. }, \hat{y}_{0} \in L^{2}\left(Q_{0} ; \mathbb{R}^{2}\right) \cap C^{0}\left(\left[0, T_{1}\right] ; H^{-1}_{\alpha}(0, 1) \times H_{\alpha}^{1}(0, 1)\right)\right. , \\
 \hat{y}_{k} \in L^{2}\left(Q_{k} ; \mathbb{R}^{2}\right) \cap C^{0}\left(\left[T_{k}, T_{k+1}\right] ; H^{-1}_{\alpha}(0, 1) \times H_{\alpha}^{1}(0, 1)\right), \quad \forall k \geq 1
\end{array}
$$
and, from \eqref{bounds.prop.2.2} (resp., \eqref{bounds.prop.2.2 b}), \eqref{bound y tilde} and Theorem 1.2,
and, for any $k \geq 1$,
$$
\left\|\hat{y}_{k}\right\|_{L^{2}\left(Q_{k}\right) \times C^{0}\left(\overline{Q}_{k}\right)} \leq e^{C T}\left(\left\|a_{k}\right\|_{H^{-1} \times H_{\alpha}^{1}}+\left\|v_{k}\right\|_{L^{2}\left(T_{k}, T_{k+1}\right)}\right) \leq C_{0} e^{C T} e^{\frac{M}{T_{k+1}-T_{k}}}\|f\|_{L^{2}\left(Q_{k} ; \mathbb{R}^{2}\right)}
$$
If we set $Y_{k}:=\tilde{y}_{k}+\hat{y}_{k}$ in $Q_{k}=(0, 1) \times\left(T_{k}, T_{k+1}\right)$, then
$$
\begin{array}{l}
Y_{0} \in L^{2}\left(Q_{0} ; \mathbb{R}^{2}\right) \quad\left(\text { resp. }, Y_{0} \in L^{2}\left(Q_{0} ; \mathbb{R}^{2}\right) \cap C^{0}\left(\left[0, T_{1}\right] ; H^{-1}_{\alpha}(0, 1) \times H_{\alpha}^{1}(0, 1)\right),\right. \\
Y_{k} \in L^{2}\left(Q_{k} ; \mathbb{R}^{2}\right) \cap C^{0}\left(\left[T_{k}, T_{k+1}\right] ; H^{-1}_{\alpha}(0, 1) \times H_{\alpha}^{1}(0, 1)\right), \quad \forall k \geq 1
\end{array}
$$
and
\begin{equation}\label{bounds Y0 Yk}
\begin{array}{l}
\left\|Y_{0}\right\|_{L^{2}\left(Q_{0} ; \mathbb{R}^{2}\right)} \leq C e^{C T} e^{\frac{M}{T_{1}}}\left(\left\|y_{0}\right\|_{H^{-1}_\alpha}+\|f\|_{L^{2}\left(Q_{0} ; \mathbb{R}^{2}\right)}\right) \\
\left(\text { resp. , }\left\|Y_{0}\right\|_{L^{2}\left(Q_{0}\right) \times C^{0}\left(\overline{Q}_{0}\right)} \leq C e^{C T} e^{\frac{M}{T_{1}}}\left(\left\|y_{0}\right\|_{H^{-1}_\alpha \times H_\alpha^{1}}+\|f\|_{L^{2}\left(Q_{0} ; \mathbb{R}^{2}\right)}\right)\right), \\
\left\|Y_{k}\right\|_{L^{2}\left(Q_{k}\right) \times C^{0}\left(\overline{Q}_{k}\right)} \leq C e^{C T} e^{\frac{M}{T_{k+1}-T_{k}}}\|f\|_{L^{2}\left(Q_{k} ; \mathbb{R}^{2}\right)}, \quad \forall k \geq 1 .
\end{array}
\end{equation}
Let us divide the proof into two cases: the case $k=0$ and the case $k \geq 1$.

\textbf{Case} $k=0$. First, from Theorem \ref{Thm-null-cont1}, we can use that $q T_{1}=T(q-1)$ to obtain (recall that $\left.v_{0}=\mathcal{C}_{T_{1}}^{(0)}\left(y_{0}\right)\right)$
$$
\left\|v_{0}\right\|_{L^{2}\left(0, T_{1}\right)} \leq C_{0} e^{\frac{M}{T_{1}}}\left\|y_{0}\right\|_{H^{-1}_\alpha}=C_{0} e^{\frac{M q(p+1)}{(q-1) T}} \rho_{0}\left(T_{1}\right)\left\|y_{0}\right\|_{H^{-1}_\alpha}
$$
Using now that $\rho_{0}$ is a positive continuous non-increasing function, from the previous estimate, we deduce the existence of a positive constant $C$ such that
\begin{equation}\label{bound v0/rho0}
 \left\|\frac{v_{0}}{\rho_{0}}\right\|_{L^{2}\left(0, T_{1}\right)} \leq C e^{\frac{C}{T}}\left\|y_{0}\right\|_{H^{-1}_\alpha}
\end{equation}
On the other hand, from \eqref{bounds Y0 Yk},
\begin{equation}
\begin{array}{llll}
\left\|Y_{0}\right\|_{L^{2}\left(Q_{0} ; \mathbb{R}^{2}\right)}  \leq  C e^{C T} e^{\frac{M}{T_{1}}}\left(\left\|y_{0}\right\|_{H^{-1}_\alpha}+\|f\|_{L^{2}\left(Q_{0}: \mathbb{R}^{2}\right)}\right) \\\\
\qquad\qquad\qquad=C e^{C T} e^{\frac{M q(p+1)}{(q-1)T}} \rho_{0}\left(T_{1}\right)\left(\left\|y_{0}\right\|_{H^{-1}_\alpha}+\|f\|_{L^{2}\left(Q_{0} ; \mathbb{R}^{2}\right)}\right) \\
\text { (resp., } \left.\left\|Y_{0}\right\|_{L^{2}\left(Q_{0}\right) \times C^{0}\left(\overline{Q}_{0}\right)}  \leq  C e^{C T}  e^{\frac{M q(p+1)}{(q-1) T}} \rho_{0}\left(T_{1}\right)\left(\left\|y_{0}\right\|_{H^{-1}_\alpha \times H_{\alpha}^{1}}+\|f\|_{L^{2}\left(Q_{0}: \mathbb{R}^{2}\right)}\right)\right) .
\end{array}
\end{equation}
Observe that $\|f\|_{L^{2}\left(Q ; \mathbb{R}^{2}\right)} \leq\|f\|_{\mathcal{F}}$ (see the expression of $\rho_{\mathcal{F}}$ in \eqref{weight functions}). Hence, repeating the previous argument, we get
\begin{equation}
\begin{array}{lll}
&\left\|\frac{Y_{0}}{\rho_{0}}\right\|_{L^{2}\left(Q_{0} ; \mathbb{R}^{2}\right)} \leq C e^{C\left(T+\frac{1}{T}\right)}\left(\left\|y_{0}\right\|_{H^{-1}_\alpha}+\|f\|_{\mathcal{F}}\right) \\
&\text { (resp., } \left.\left\|\dfrac{Y_{0}}{\rho_{0}}\right\|_{L^{2}\left(Q_{0}\right) \times C^{0}\left(\overline{Q}_{k}\right)} \leq C e^{C\left(T+\frac{1}{T}\right)}\left(\left\|y_{0}\right\|_{H^{-1}_\alpha \times H_{\alpha}^{1}}+\|f\|_{\mathcal{F}}\right)\right)
\end{array}
\end{equation}
\textbf{Case} $k \geq$ 1. Again, taking into account formula $v_{k}=\mathcal{C}_{T_{k+1}-T_{k}}^{(0)}\left(a_{k}\right)$, Theorem \ref{Thm-null-cont1}, \eqref{bound y tilde} and \eqref{weight functions link}, we infer
$$\begin{aligned}
\left\|v_{k}\right\|_{L^{2}\left(T_{k}, T_{k+1}\right)} \leq C e^{\frac{M}{T_{k+1}-T_{k}}}\left\|a_{k}\right\|_{H^{-1}} &\leq C e^{C T} e^{\frac{M}{T_{k+1}-T_{k}}}\|f\|_{L^{2}}\left(Q_{k-1} ; \mathbb{R}^{2}\right)\\
&=C e^{C T} \frac{\rho_{0}\left(T_{k+1}\right)}{\rho_{\mathcal{F}}\left(T_{k-1}\right)}\|f\|_{L^{2}\left(Q_{k-1} ; \mathbb{R}^{2}\right)} .
\end{aligned}$$
As in the case $k=0$, using the fact that $\rho_{0}$ and $\rho_{\mathcal{F}}$ are non-increasing functions, from the previous inequality, we deduce
\begin{equation}\label{bound vk and rho 0}
\left\|\frac{v_{k}}{\rho_{0}}\right\|_{L^{2}\left(T_{k}, T_{k+1}\right)} \leq C e^{C T}\left\|\frac{f}{\rho_{\mathcal{F}}}\right\|_{L^{2}\left(Q_{k-1} ; \mathbb{R}^{2}\right)}, \quad \forall k \geq 1
\end{equation}
We can also repeat the previous argument to obtain an estimate for $Y_{k}$ when $k \geq 1$. From \eqref{bounds Y0 Yk},
\begin{equation}
\begin{aligned}
\left\|Y_{k}\right\|_{L^{2}\left(Q_{k}\right) \times C^{0}\left(\overline{Q}_{k}\right)}  \leq C e^{C T} e^{\frac{M}{T_{k+1}-T_{k}}}\|f\|_{L^{2}\left(Q_{k} ; \mathbb{R}^{2}\right)} & =C e^{C T} \frac{\rho_{0}\left(T_{k+1}\right)}{\rho_{\mathcal{F}}\left(T_{k-1}\right)}\|f\|_{L^{2}\left(Q_{k} ; \mathbb{R}^{2}\right)} \\
& \leq C e^{C T} \frac{\rho_{0}\left(T_{k+1}\right)}{\rho_{\mathcal{F}}\left(T_{k}\right)}\|f\|_{L^{2}\left(Q_{k} ; \mathbb{R}^{2}\right)}
\end{aligned}
\end{equation}
what implies 
\begin{equation}\label{bounds Yk and rho 0 }
\left\|\frac{Y_{k}}{\rho_{0}}\right\|_{L^{2}\left(Q_{k}\right) \times C^{0}\left(\overline{Q}_{k}\right)} \leq C e^{C T}\left\|\frac{f}{\rho_{\mathcal{F}}}\right\|_{L^{2}\left(Q_{k} ; \mathbb{R}^{2}\right)}, \quad \forall k \geq 1
\end{equation}
With the functions $v_{k}$ and $Y_{k}, k \geq 0$, defined above, we define
$$
\mathcal{C}_{T}^{(1)}\left(y_{0}, f\right):=v=\sum_{k \geq 0} v_{k} 1_{\left[T_{k}, T_{k+1}\right)} \quad \text { and } \quad E_{T}^{(0)}\left(y_{0}, f\right):=Y=\sum_{k \geq 0} Y_{k} 1_{\left[T_{k}, T_{k+1}\right)}
$$
where $1_{I}$ is the characteristic function on the set $I$. Let us first remark that, by construction, $\mathcal{C}_{T}^{(1)}$ and $E_{T}^{(0)}$ are linear operators. On the other hand, recall that $Y_{k}=\tilde{y}_{k}+\hat{y}_{k}, k \geq 0$, where $\tilde{y}_{k}$ and $\hat{y}_{k}$ are respectively the solution to systems \eqref{linear system y tilde} and \eqref{linear system y hat}. So,
\begin{align*}
Y_{k}\left(T_{k+1}^{-}\right)=a_{k+1}=\hat{y}_{k+1}\left(T_{k+1}^{+}\right)=Y_{k+1}\left(T_{k+1}^{+}\right), \quad \forall k \geq 0
\end{align*}
which implies that the function $Y$ is continuous at time $T_{k}$, for any $k \geq 1$, and is the solution of system \eqref{bound-nonhomog-sys} associated to $\left(y_{0}, f, v\right)$.

Finally, thanks to \eqref{bound v0/rho0} and \eqref{bound vk and rho 0}, we also deduce that $\mathcal{C}_{T}^{(1)}\left(y_{0}, f\right) \in \mathcal{V}$ and $E_{T}^{(0)}\left(y_{0}, f\right) \in$ $\mathcal{H}_{0}$ (resp., $\left.E_{T}^{(0)}\left(y_{0}, f\right) \in\mathcal{H}\right)$ for any $\left(y_{0}, f\right) \in H^{-1}_\alpha (0,1)^2 \times \mathcal{F}$ (resp., for any $\left(y_{0}, f\right) \in$ $\left.H^{-1}_\alpha(0, 1) \times H_{\alpha}^{1}(0, 1) \times \mathcal{F}\right)$ and
\begin{align*}
\begin{array}{l}
\left\|\mathcal{C}_{T}^{(1)}\left(y_{0}, f\right)\right\|_{\mathcal{V}}=\|v\|_{\mathcal{V}} \leq C e^{C\left(T+\frac{1}{T}\right)}\left(\left\|y_{0}\right\|_{H^{-1}_\alpha}+\|f\|_{\mathcal{F}}\right), \\
\left\|E_{T}^{(0)}\left(y_{0}, f\right)\right\|_{\mathcal{H}_{0}}=\|Y\|_{\mathcal{H}_{0}} \leq C e^{C\left(T+\frac{1}{T}\right)}\left(\left\|y_{0}\right\|_{H^{-1}_\alpha}+\|f\|_{\mathcal{F}}\right), \quad \forall\left(y_{0}, f\right) \in H^{-1}_\alpha(0,1)^2 \times \mathcal{F},
\end{array}
\end{align*}
(resp.,
\begin{align*}
\begin{gathered}
\left\|E_{T}^{(0)}\left(y_{0}, f\right)\right\|_{\mathcal{H}}=\|Y\|_{\mathcal{H}} \leq C e^{C\left(T+\frac{1}{T}\right)}\left(\left\|y_{0}\right\|_{H^{-1}_\alpha \times H_{\alpha}^{1}}+\|f\|_{\mathcal{F}}\right), \\
\left.\forall\left(y_{0}, f\right) \in H^{-1}_\alpha(0, 1) \times H_{\alpha}^{1}(0, 1) \times \mathcal{F}\right)
\end{gathered}
\end{align*}
The above estimates provide the proof of theorem \ref{thm-operators}. This ends the proof.
\end{proof}

\begin{remark}
Note that theorem \ref{thm-operators} provides a null controllability result for the non-homogeneous system \eqref{bound-nonhomog-sys} when $y_{0} \in H^{-1}_\alpha\left(0, 1 ; \mathbb{R}^{2}\right)$ and $f \in \mathcal{F} .$ Indeed, since $\rho_{0}$ is a continuous function on $[0, T]$ satisfying $\rho_{0}(T)=0$, it is clear that
\begin{align*}
y=E_{T}^{(0)}\left(y_{0}, f\right) \in \mathcal{H}_{0} \cap C^{0}\left([0, T] ; H_{\alpha}^{-1}(0,1)^2\right)
\end{align*}
solves \eqref{bound-nonlinear-sys} and satisfies $y(T, \cdot)=0$ in $H_{\alpha}^{-1}(0,1)^2$.
\end{remark}

\subsection{Proof of Theorem \ref{thm-local-exact-control}}
In this section, we will prove the local exact controllability at time \( T > 0 \) of the nonlinear system \eqref{bound-nonlinear-sys}, as stated in Theorem \ref{thm-local-exact-control}. For this purpose, using techniques similar to those in \cite{Burgos_2020} and \cite{Liu_2012}, we shall employ a fixed-point strategy in the space \( \mathcal{H} \). This approach will demonstrate the existence of a control \( v \in \mathcal{V} \) such that the system \eqref{bound-nonlinear-sys} has a solution \( y \in \mathcal{H} \) associated with \( (v, y_0) \), and the null controllability result will follow from the condition \( y \in \mathcal{H} \) for this system.

Let us fix $\delta>0$. We consider $\overline{B}_{\delta}$ the closed ball in the space $\mathcal{F}$ (introduced in \eqref{weighted normed spaces}) by
$$
\overline{B}_{\delta}=\left\{f \in \mathcal{F}:\|f\|_{\mathcal{F}} \leq \delta\right\}.
$$
Assume that  the initial datum ${y}_{0} \in H^{-1}_\alpha(0, 1) \times H_{\alpha}^{1}(0, 1) $ satisfies \eqref{estim-1.15}.

For each $f \in \overline{B}_{\delta} \subset \mathcal{F}$, we denote 
\begin{align*}
v_{f}=\mathcal{C}_{T}^{(1)}\left(y_{0}, f\right) \in \mathcal{V} ~\text{ and }~ y_{f}=\left(y_{1,f},y_{2,f}\right):=E_{T}^{(1)}\left(y_{0}, f\right) \in \mathcal{H}
\end{align*}
where the operators $\mathcal{C}_{T}^{(1)}$ and $E_{T}^{(1)}$ are given in Theorem \ref{thm-operators}.
Based on the above result and \eqref{estim-1.15}, one has
\begin{equation}\label{estimat-5.14}
\left\|y_{f}\right\|_{\mathcal{H}}+\left\|v_{f}\right\|_{\mathcal{V}} \leq C e^{C\left(T+\frac{1}{T}\right)}\left(\left\|y_{0}\right\|_{H^{-1}_\alpha \times H_{\alpha}^{1}}+\|f\|_{\mathcal{F}}\right) \leq C e^{C\left(T+\frac{1}{T}\right)} \delta, \quad \forall f \in \overline{B}_{\delta}
\end{equation}
for a positive constant $C$. Thus, we define the nonlinear operator 
\begin{align*}
\mathcal{O}: \overline{B}_{\delta} \rightarrow C^{0}\left(\overline{Q}_{T} ; \mathbb{R}^{2}\right)
\end{align*}
given by 
\begin{align*}
\mathcal{O} (f)=f(y)=\begin{pmatrix}
f_1(y) \\
f_2(y) 
\end{pmatrix}
\end{align*}
Now, we focus on the key part of the proof, which is to verify the validity of the assumptions of the fixed-point theorem, in order to prove that \( \mathcal{O} \) admits a fixed point \( f \) in the complete metric space \( \overline{B}_{\delta} \subset \mathcal{F} \). First, let \( p, q > 1 \) be fixed such that
\begin{align*}
q^{2} \in\left(1, \frac{2p}{p+1}\right)
\end{align*}
With this choice, the function \( \frac{\rho_0^2}{\rho_{\mathcal{F}}} \) is uniformly bounded on \( [0, T] \), i.e., there exists a constant \( C_T > 0 \), depending on \( T \), such that
\begin{align*}
\left\|\frac{\rho_{0}^2}{\rho_{\mathcal{F}}}\right\|_{C^{0}[0, T]} \leq C_{T}
\end{align*}
Let us now verify the assumptions of the Banach Fixed-Point Theorem, taking into account that the source term \( f \) satisfies \eqref{nonlinCondit1}:
\begin{enumerate}
\item $\mathcal{O}\left(\overline{B}_{\delta}\right) \subset \overline{B}_{\delta}$ : Indeed, if $f \in \overline{B}_{\delta}$, then, from \eqref{estimat-5.14}, we obtain
\begin{align*}
\begin{aligned}
\|\mathcal{O}(f)\|_{\mathcal{F}} & \leq C_{T}\left\|\frac{\mathcal{O}(f)}{\rho_{\mathcal{F}}}\right\|_{C^{0}\left(\overline{Q}_{T} ; \mathbb{R}^{2}\right)} \leq C_{T}\left(\left\|\frac{f_1}{\rho_{\mathcal{F}}}\right\|_{C^{0}\left(\overline{Q}_{T}\right)}+\left\|\frac{f_2}{\rho_{\mathcal{F}}}\right\|_{C^{0}\left(\overline{Q}_{T}\right)}\right) \\
&\leq C_{T}\left(\left\|\frac{y_{2}^2}{\rho_{\mathcal{F}}}\right\|_{C^{0}\left(\overline{Q}_{T}\right)}+\left\|\frac{y_{2}^2}{\rho_{\mathcal{F}}}\right\|_{C^{0}\left(\overline{Q}_{T}\right)}\right) \\
& \leq C_{T}\left(\left\|\frac{\rho_{0}^{2}}{\rho_{\mathcal{F}}}\right\|_{C^{0}\left(\overline{Q}_{T}\right)}\left\|\frac{y_{2}}{\rho_{0}}\right\|_{C^{0}\left(\overline{Q}_{T}\right)}^{2}\right) \\
& \leq C_{T} \left\|y_{f}\right\|_{\mathcal{H}}^{2}\leq C_{T} e^{C\left(T+\frac{1}{T}\right)}\delta^{2} \leq \delta
\end{aligned}
\end{align*}
for $\delta=\delta(T)$ small enough.
\item $\mathcal{O}$ is a contraction mapping: Let us take $f_{1}, f_{2} \in \overline{B}_{\delta} \subset \mathcal{F}$ and denote $$y^{(1)}=\left( y_1, y_2\right)=E_{T}^{(1)}\left(y_{0}, f_{1}\right) \in \mathcal{H}$$
$$y^{(2)}=\left(y_1', y_2'\right)=E_{T}^{(1)}\left(y_{0}, f_{2}\right) \in \mathcal{H}$$
Thus, using again \eqref{estimat-5.14} and Theorem \ref{thm-operators}, we have
\begin{align*}
\begin{aligned}
\left\|\mathcal{O}\left(f_{1}\right)-\mathcal{O}\left(f_{2}\right)\right\|_{\mathcal{F}} &\leq
 C_{T} \sum_{j=1}^{2}\left\|\frac{f_{j}^{(1)}\left(y_{1},y_{2}\right)-f_{j}^{(2)}\left(y_{1}',y_{2}'\right)}{\rho_{\mathcal{F}}}\right\|_{C^{0}\left(\overline{Q}_{T};\mathbb{R}^2\right)} \\
&\leq C_{T}\left\| \frac{y_{2}^2-{y_2'}^2}{\rho_{\mathcal{F}}}\right\|_{C^{0}\left(\overline{Q}_{T}\right)}\\
&\leq C_{T}\left\|\frac{\rho_{0}}{\rho_{\mathcal{F}}}(y_2+y_{2}') \frac{y_{2}-y_{2}'}{\rho_{0}}\right\|_{C^{0}\left(\overline{Q}_{T}\right)} \\
&\leq C_{T}\left\|\frac{\rho_{0}^2}{\rho_{\mathcal{F}}} \times \frac{y_2+y_{2}'}{\rho_{0}} \right\|_{C^{0}\left(\overline{Q}_{T}\right)}  \left\| \frac{y_{2}-y_{2}'}{\rho_{0}}\right\|_{C^{0}\left(\overline{Q}_{T}\right)} \\
&\leq C_{T} \left( \left\| y^{(1)} \right\|_{\mathcal{H}} +\left\|y^{(2)}\right\|_{\mathcal{H}} \right) \left\|E_{T}^{(1)}\left(y_{0}, f_{1}\right)-E_{T}^{(1)}\left(y_{0}, f_{2}\right)\right\|_{\mathcal{H}} \\
& \leq C_{T} e^{C\left(T+\frac{1}{T}\right)} \delta \left\|f_{1}-f_{2}\right\|_{\mathcal{F}}
\end{aligned}
\end{align*}
From this inequality, it is clear that we can choose \( \delta = \delta(T) \) small enough such that \( \mathcal{O} \) becomes a contraction mapping. This proves that the operator \( \mathcal{O} \) has a fixed point. 
On the other hand, using the fact that \( \mathcal{O} \) admits a fixed point \( f \in \mathcal{F} \), we have \( y_f \in \mathcal{H} \). Together with \( v_f \in \mathcal{V} \), this provides a solution to the system \eqref{bound-nonlinear-sys} associated with the initial data \( y_0 = (y_1^0, y_2^0) \). In fact, from Proposition \ref{well.posed.transp}, we obtain \( y_f \in C^0([0, T]; H_\alpha^{-1}(0,1)^2) \). Finally, the condition \( y_f \in C^0([0, T]; H_\alpha^{-1}(0,1)^2) \cap \mathcal{H} \) implies the null controllability result for system \eqref{bound-nonlinear-sys}. This completes the proof of Theorem \ref{thm-local-exact-control}.

\end{enumerate}

%

\appendix
\section{Appendix}\label{appendix:well posed}
For the readers’ convenience, in this section we prove some results used throughout the
previous sections.

\subsection{Proof of Proposition \ref{well.posed.transp}}
This appendix will be devoted to dealing with the existence and uniqueness of solution for the non-homogeneous linear system \eqref{bound-nonhomog-sys} with term source $f \in L^{2}\left(0, 1 \right)^{2}$. To be precise, we will prove Proposition \ref{well.posed.transp}.
\begin{proof}
First of all, we denote by $\mathcal{H}_\alpha^1(0,1)$ the following weighted Sobolev space:
\begin{align*}
\mathcal{H}_\alpha^1(0,1):=\left\{y \in L^2(0,1) \cap H_{l o c}^1((0,1]): x^{\alpha / 2} y_x \in L^2(0,1)\right\} .
\end{align*}
Let $y_0 \in H_\alpha^{-1}(0,1)^2, v \in L^2(0, T)$ and consider the following functional
\begin{align*}
\mathcal{T}: L^2(Q_T)^2 \rightarrow \mathbb{R}
\end{align*}
given by
\begin{align*}
\mathcal{T}(g)=\left\langle y_0, \varphi(0, \cdot)\right\rangle_{H_\alpha^{-1}, H_\alpha^1}-\int_0^T B^*\left(x^\alpha \varphi_x\right)(t, 1) v(t) d t+ \iint_{Q_T} f.\varphi \, dx dt
\end{align*}
where $\varphi \in C^0\left([0, T] ; H_\alpha^1(0,1)^2\right) \cap L^2\left(0, T ; H_\alpha^2(0,1)^2\right)$ is the solution of the adjoint system \eqref{adjoint system} associated to $g \in L^2(Q_T)^2$. From the estimate \eqref{bound phi prop 2.1}, it follows that
\begin{align*}
|\mathcal{T}(g)| \leq C\left(\|v\|_{L^2(0, T)}+\left\|y_0\right\|_{H_\alpha^{-1}} +\left\|f\right\|_{L^2(L^2)}
\right)\|g\|_{L^2(Q)^2}
\end{align*}
for all $g \in L^2(Q)^2$. Hence, $\mathcal{T}$ is bounded. As a consequence, by Riesz Representation Theorem, there exists a unique $y \in L^2(Q)^2$ satisfying \eqref{duality identity}. Moreover,
\begin{align*}
\|y\|_{L^2(Q)^2}=\|\mathcal{T}\| \leq C\left(\|v\|_{L^2(0, T)}+\left\|y_0\right\|_{H_\alpha^{-1}} +\left\|f\right\|_{L^2(L^2)}\right)
\end{align*}
and $y$ satisfies the equality $y_t-\left(x^\alpha y_x\right)_x - A y = f \quad$ in $\quad \mathcal{D}^{\prime}(Q_T)^2$.
Next, we are going to prove that the solution $y$ of system \eqref{bound-sys} is more regular. To be precise, we show that $\left(x^\alpha y_x\right)_x \in L^2\left(0, T ;\left(H_\alpha^2(0,1)^2\right)^{\prime}\right)$ and
\begin{align}\label{stima appendix}
\left\|\left(x^\alpha y_x\right)_x\right\|_{L^2\left(\left(H_\alpha^2(0,1)^2\right)^{\prime}\right)} \leq C\left(\|v\|_{L^2(0, T)}+\left\|y_0\right\|_{H_\alpha^{-1}}  +\left\|f\right\|_{L^2(L^2)}\right)
\end{align}
For doing that, let us take two sequences $\left\{y_0^m\right\}_{m \geq 1} \subset H_\alpha^1(0,1)^2$ and $\left\{v^m\right\}_{m \geq 1} \subset H_0^1(0, T)$ such that
\begin{align*}
y_0^m \rightarrow y_0 \quad \text { in } H_\alpha^{-1}(0,1)^2 \quad \text { and } \quad v^m \rightarrow v \quad \text { in } \quad L^2(0, T)
\end{align*}
Now, the strategy consists in transforming system \eqref{bound-sys} into a problem with homogeneous boundary condition and a source term. To this end, let us introduce the change of variables
\begin{align*}
y^m(t, x)=\tilde{y}^m(t, x)+x^{2-\alpha} v^m(t) B
\end{align*}
where $y^m$ is the solution of \eqref{bound-sys} associated to $y_0^m$ and $v^m$. Then, formally, the new function $\tilde{y}^m$ satisfies the problem
\begin{equation}\label{mbound-sys}
\left\{
\begin{array}{lll}
\tilde{y}_{t}^m - ( x^{\alpha}  \tilde{y}^m_x )_x - A \tilde{y}^m = \tilde{f}^m ,  &  & \text{in} \;  Q_T:=(0, \, T)\times(0, 1), \\
\tilde{y}^m(t,1)= 0, & & \text{in} \; (0, \, T),  \\
\begin{cases}
& \tilde{y}^m(t, 0) = 0,   \qquad   \, 0 \leq \alpha < 1 \\
&  x^{\alpha} \tilde{y}^m_x  (t, 0)= 0 , \quad 1\leq \alpha < 2 \\
\end{cases}  & & t \in (0, T),\\
\tilde{y}^m(0,x)= \tilde{y}^m_{0}(x),  & & \text{in}  \;  (0, 1),
\end{array}
\right.
\end{equation}
where $\tilde{f}^m(t, x)=\left[(2-\alpha) v^m(t)-x^{2-\alpha} v_t^m(t)\right] B+x^{2-\alpha} v^m(t) A B$. Moreover, an easy computation shows that the function $x \mapsto x^{2-\alpha}$ belongs to $\mathcal{H}_\alpha^1(0,1)$ which of course implies that $\tilde{f}^m \in L^2(Q_T)^2$. In view of the previous regularity assumptions we can apply Proposition \ref{prop.2.1}, to deduce that system \eqref{mbound-sys} has a unique solution
\begin{align*}
\tilde{y}^m \in L^2\left(0, T ; H_\alpha^2(0,1)^2\right) \cap C^0\left(0, T ; H_\alpha^1(0,1)^2\right)
\end{align*}
Therefore, the problem \eqref{bound-sys} for $v^m$ and $y_0^m$ has a unique solution $y^m \in L^2\left(0, T ; \mathcal{H}_\alpha^1(0,1)^2\right)$ which satisfies
\begin{align*}
\iint_Q y^m \cdot g d t d x=\left\langle y_0^m, \varphi(0, \cdot)\right\rangle_{H_\alpha^{-1}, H_\alpha^1}-\int_0^T B^*\left(x^\alpha \varphi_x\right)(t, 1) v^m(t) d t +\iint_{Q_T} f.\varphi\, dx dt, \quad \forall m \geq 1
\end{align*}
for all $g \in L^2(Q_T)^2$, where $\varphi$ is the solution of the system \eqref{adjoint system} associated to $g$. Using this last identity and \eqref{duality identity}, we obtain
\begin{align}\label{property of ym}
\left\{\begin{array}{l}
\left\|y^m\right\|_{L^2(Q)^2} \leq C\left(\|v\|_{L^2(0, T)}+\left\|y_0\right\|_{H_\alpha^{-1}}\right) \\
y^m \rightarrow y \quad \text { in } L^2(Q)^2 \quad \text { and } \quad\left(x^\alpha y_x^m\right)_x \rightarrow\left(x^\alpha y_x\right)_x \quad \text { in } \mathcal{D}^{\prime}(Q_T)^2
\end{array}\right.
\end{align}
On the other hand, integrations by parts lead to
\begin{align*}
\int_0^T\left\langle\left(x^\alpha y_x^m\right)_x, \psi\right\rangle d t=\iint_Q y^m \cdot\left(x^\alpha \psi_x\right)_x d t d x-\int_0^T B^*\left(x^\alpha \psi_x\right)(t, 1) v^m(t) d t, \quad \forall m \geq 1
\end{align*}
for every $\psi \in L^2\left(0, T ; H_\alpha^2(0,1)^2\right)$. From this equality we infer that the sequence $\left(x^\alpha y_x^m\right)_x$ is bounded in $L^2\left(0, T ;\left(H_\alpha^2(0,1)^2\right)^{\prime}\right)$. Combining with \eqref{property of ym}, we deduce that $\left(x^\alpha y_x\right)_x$ belongs to $L^2\left(0, T ;\left(H_\alpha^2(0,1)^2\right)^{\prime}\right)$ and satisfies estimate \eqref{stima appendix}. With the previous property in mind and the identity $y_t=\left(x^\alpha y_x\right)_x+A y+f$, we also see that $y_t \in L^2\left(0, T ;\left(H_\alpha^2(0,1)^2\right)^{\prime}\right)$ and
\begin{align*}
\left\|y_t\right\|_{L^2\left(\left(H_\alpha^2(0,1)^2\right)^{\prime}\right)} \leq C\left(\|v\|_{L^2(0, T)}+\left\|y_0\right\|_{H_\alpha^{-1}} +\left\|f\right\|_{L^2(L^2)} \right)
\end{align*}
for a positive constant $C$. Therefore $y \in C\left([0, T] ; X^2\right)$, where $X$ is the interpolation space $X=\left[L^2(0,1),\left(H_\alpha^2(0,1)\right)^{\prime}\right]_{1 / 2}=H_\alpha^{-1}(0,1)$ (see  \cite[Theorem 11.4]{chipot}). In conclusion, we get
\begin{align*}
\|y\|_{C\left(H_\alpha^{-1}(0,1)^2\right)} \leq C\left(\|v\|_{L^2(0, T)}+\left\|y_0\right\|_{H_\alpha^{-1}} +\left\|f\right\|_{L^2(L^2)} \right)
\end{align*}
Finally, one can easily check that $y(0, \cdot)=y_0 \quad$ in $H_\alpha^{-1}(0,1)^2$. This ends the proof.
\end{proof}

\subsection{Spectral properties}\label{appendix:a}
The spectral properties of the operators $L$ (given by \eqref{operatorL}) and its adjoint $L^*$ will be addressed in this appendix. To be more specific, we shall demonstrate Propositions \ref{propertieseigenfLL} and \ref{rieszbasisresult} presented in section \ref{Section-eigenvalue}.
\begin{proof}[Proof of Proposition \ref{propertieseigenfLL}]
The proof of the proposition above is similar to the one given, for instance in \cite[Proposition 3.4]{AHSS2021}.
We shall demonstrate the result for the operator $L$. The argument for its adjoint $L^*$ is provided by the same logic. 

We look for a complex $\lambda$ and a function $\psi \in H^2(0, 1; \mathbb{C}^2)\cap H_\alpha^1(0, 1;\mathbb{C}^2)$ such that $$L(\psi) = \lambda \psi.$$ Using the fact that the function $\Phi_{\nu_{\alpha}, n}$ is the eigenfunction of the degenerate elliptic operator $- \partial_x (x^{\alpha} \partial_x \cdot )$ associated to the eigenvalues $\lambda_{\nu_{\alpha}, n}= \kappa_{\alpha}^2 j_{\nu_{\alpha}, n}^2$,  we can find $\psi$ as
$$ \psi(x)= \sum_{n\geq 1} a_n \Phi_{\nu_{\alpha}, n}(x), \quad \forall x \in (0,1),$$
where $\{a_n\}_{n\geq1} \subset \mathbb{C}^2$ and, for some $k\geq 1$, $a_k \neq 0$. From the identity $L(\psi) = \lambda \psi$ we deduce 
$$  \sum_{n\geq 1} \big(\kappa_{\alpha}^2 j_{\nu_{\alpha}, n}^2 I - A - \lambda I \big) a_n \Phi_{\nu_{\alpha}, n}(x)=0, \quad \forall x \in (0,1).$$

From this identity, it is clear that the eigenvalues of the operator $L$ correspond to the eigenvalues of the matrices
\begin{equation*}
\kappa_{\alpha}^2 j_{\nu_{\alpha}, n}^2 I - A, \qquad \forall n \geq 1
\end{equation*}
and the associated  eigenfunctions of $L$ are given
choosing $a_n = z_k \delta_{kn}$, for any $n \geq1$, where $z_k \in \mathbb{C}^2$ is an associated eigenvector of $j_{\nu_{\alpha}, n}^2 I - A$, that is to say, $\psi_n(\cdot) = z_n \Phi_{\nu_{\alpha}, n}(\cdot)$.

Taking into account the expression of the characteristic polynomial of $\kappa_{\alpha}^2 j_{\nu_{\alpha}, n}^2 I - A$:
$$P(z) = z^2 - z (2 \lambda_{\nu_{\alpha}, n} - a_2) + \lambda_{\nu_{\alpha}, n} (\lambda_{\nu_{\alpha}, n} - a_2) - a_1, \qquad n \geq 1,$$
a direct computation provides the formulas \eqref{sigmaLL*} and \eqref{eigenf-L} as eigenvalues and associated eigenfunctions
of the operator L. This ends the proof.
\end{proof}
 
\begin{proof}[Proof of Proposition \ref{rieszbasisresult}] 
From the expressions of $\psi_n^{(i)}$ and $\Psi_n^{(i)}$, we can write
\begin{align*}
\psi_n^{(i)} = U_{i} \Phi_{\nu_{\alpha}, n} \quad \text{and}\quad \Psi_n^{(i)} = V_{i} \Phi_{\nu_{\alpha}, n},\quad i=1,2,\quad  n\geq 1,
\end{align*}
where $ U_{i}, V_{i} \in \mathbb{R}^2$ and $\Phi_{\nu_{\alpha}, n}$ is given \eqref{eigenfunctions}.
\smallskip
\begin{enumerate}
\item It is not difficult to check that $\{U_i\}_{i=1,2}$ and $\{V_i\}_{i=1,2}$ are
biorthogonal families of $\mathbb{R}^2$. Moreover,
since $(\Phi_{\nu_{\alpha}, n})_{n\geq1}$ is an orthonormal basis for $L^2(0,1)$, we readily deduce
\begin{align*}
\langle \psi_n^{(i)},\Psi_k^{(j)} \rangle =  (U_{i})^{tr} V_{j} \langle \Phi_{\nu_{\alpha}, n},\Phi_{\nu_{\alpha}, k} \rangle = \delta_{ij} \delta_{nk}, \quad \forall n,k\geq 1,\quad i,j=1, 2.
\end{align*}
This proves the claim.
\item We will use \cite[Lemma 1.44]{chris}. For this purpose, let us consider $f= (f_1, f_2)^{tr} \in L^2(0,1)^2$ such that
\begin{align*}
\langle f  , \psi_n^{(i)} \rangle = 0,\quad \forall n\geq 1, \quad i=1, 2.
\end{align*}
If
we denote $f_{i,n}$ $(i = 1,2)$ the corresponding Fourier coefficients of the function $f_i\in L^2(0,1)$ with respect to the basis $(\Phi_{\nu_{\alpha}, n})_{n\geq1}$, then the previous equality can be written as
\begin{align*}
(f_{1,n} , f_{2,n} ) [U_{1} | U_{2}]= 0_{\mathbb{R}^{2}}, \quad \forall n\geq 1.
\end{align*}
Using the fact that $det [U_{1} | U_{2}] \neq 0$, we deduce $f_{1,n} = f_{2,n} = 0$, for all $n\geq 1$. This implies that $f_1=f_2=0$ (since $(\Phi_{\nu_{\alpha}, n})_{n\geq1}$ is an orthonormal basis in $L^2(0,1)$) and, therefore,
$f = 0$ which proves the completeness of $\mathcal{B}$.
A similar argument can be used for $\mathcal{B}^*$ and the conclusion follows immediately.
\item By \cite[Theorem 7.13]{chris}, we know that $\big\{\psi_n^{(1)} , \psi_n^{(2)}\big\}_{n \geq 1}$ is a Riesz basis for $L^2(0,1)^2$ if and only if
$\big\{\psi_n^{(1)} , \psi_n^{(2)}\big\}_{n \geq 1}$ is a complete Bessel sequence and possesses a biorthogonal system
that is also a complete Bessel sequence. Using the previous properties $1)$ and $2)$, we only have to prove that the sequence $\big\{\psi_n^{(1)} , \psi_n^{(2)}\big\}_{n \geq 1}$ and $\big\{\Psi_n^{(1)} , \Psi_n^{(2)}\big\}_{n \geq 1}$ are Bessel sequences. This amounts to prove that the series
\begin{align*}
S_1(f)= \sum_{n\geq 1} \big[ \langle f  , \psi_n^{(1)} \rangle^2 + \langle f  , \psi_n^{(2)} \rangle^2 \big] \quad
\text{and}
\quad S_2(f)= \sum_{n\geq 1} \big[ \langle f  , \Psi_n^{(1)} \rangle^2 + \langle f  , \Psi_n^{(2)} \rangle^2 \big]
\end{align*}
converge for any $f= (f_1, f_2)^{tr} \in L^2(0,1)^2$.

From the definition of the functions $\psi_n^{(i)}$ and $\Psi_n^{(i)}$, it is easy to see that there exists some constant $C>0$ such that
\begin{align*}
S_1(f) \leq C \sum_{n\geq 1} \big( |f_{1,n}|^2 + |f_{2,n}|^2 \big) \quad
\text{and} \quad
S_2(f) \leq C \sum_{n\geq 1} \big( |f_{1,n}|^2 + |f_{2,n}|^2 \big).
\end{align*}
Recall that $f_{i,n}$ is the Fourier coefficient of the function $f_{i}\in L^2(0,1)$ ($i=1,2$) with respect to $\Phi_{\nu_{\alpha}, n}$. Accordingly, the series $S_1(f)$ and $S_2(f)$ converge since $(\Phi_{\nu_{\alpha}, n})_{n\geq1}$ is an orthonormal basis for $L^2(0,1)$. We obtain thus the proof of desired result.
\item For showing item $4)$ we make use of \cite[Theorem 5.12]{chris}.
First, we have that
\begin{align*}
H_{\alpha}^1(0,1) \subset  L^2(0,1)  \subset \big( H_{\alpha}^1(0,1) \big)^{'} = H_{\alpha}^{-1}(0,1).
\end{align*}
Therefore, $\mathcal{B}^* \subset H_{\alpha}^1(0,1)^2 $ and is complete in this space since it is in $L^2(0,1)^2$.
On the other hand, by the definition of the duality pairing, we have
\begin{align*}
\langle \psi_n^{(i)},\Psi_k^{(j)} \rangle_{H_{\alpha}^{-1},H_{\alpha}^1} = \langle \psi_n^{(i)},\Psi_k^{(j)} \rangle=\delta_{ij} \delta_{nk},\quad \forall n,k\geq 1, \quad i,j=1,2.
\end{align*}
Thus, $\mathcal{B} \subset H_{\alpha}^{-1}(0,1)^2 $ and is biorthogonal to $\mathcal{B}^*$, which also yields that $\mathcal{B}^*$ is minimal in $H_{\alpha}^{1}(0,1)^2$ thanks to \cite[Lemma 5.4]{chris}.
To conclude the proof, it remains to prove that for any $f=(f_1,f_2)\in H_{\alpha}^1(0,1)^2$, the series
\begin{align*}
S(f)= \sum_{n\geq 1} \big[ \langle \psi_n^{(1)}, f \rangle_{H_{\alpha}^{-1},H_{\alpha}^1} \Psi_n^{(1)} + \langle  \psi_n^{(2)}, f \rangle_{H_{\alpha}^{-1},H_{\alpha}^1} \Psi_n^{(2)}\big]
\end{align*}
converges in $H_{\alpha}^1(0,1)^2 $.

Using again the definitions of $ \psi_n^{(i)}$ and $\Psi_n^{(i)}$, one can prove that
\begin{equation}
\langle \psi_n^{(1)}, f \rangle_{H^{-1}_\alpha, H_{\alpha}^{1}} \Psi_n^{(1)} =  \frac{1}{\alpha_1 - \alpha_2} \begin{pmatrix} -\alpha_2 f_{1,n}  + f_{2,n}  \\\\ - \alpha_1 \alpha_2 f_{1,n} + \alpha_1 f_{2,n} \end{pmatrix}   \Phi_{\nu_\alpha, n}
\end{equation}
and
\begin{equation}
\langle  \psi_n^{(2)}, f \rangle_{H^{-1}_\alpha, H_{\alpha}^{1}} \Psi_n^{(2)} = \frac{1}{\alpha_2 - \alpha_1} \begin{pmatrix} -\alpha_1 f_{1,n}  + f_{2,n}  \\\\ - \alpha_1 \alpha_2 f_{1,n} + \alpha_2 f_{2,n} \end{pmatrix}  \Phi_{\nu_\alpha, n}
\end{equation}
where $f_{i,n}$ is the Fourier coefficient of the function $f_i \in H_{\alpha}^1(0,1)$, $i= 1,2$.

But, we know that the series $ \sum\limits_{n\geq 1} f_{i,n} \Phi_{\nu_{\alpha}, n}$, $i= 1,2$ converges in $H_{\alpha}^1(0,1)$ since $(\Phi_{\nu_{\alpha}, n})_{n\geq1} $ is an orthogonal basis for $H_{\alpha}^1(0,1)$ and $f_1, f_2 \in H_{\alpha}^1(0,1)$.
This implies that, the series $\sum\limits_{n\geq 1} \langle \psi_n^{(1)}, f \rangle_{H_{\alpha}^{-1},H_{\alpha}^1} \Psi_n^{(1)}$ and $\sum\limits_{n\geq 1} \langle  \psi_n^{(2)}, f \rangle_{H_{\alpha}^{-1},H_{\alpha}^1} \Psi_n^{(2)}$ converge in $H_{\alpha}^1(0,1)^2$ and assure the convergence of $S(f)$ in $H_{\alpha}^1(0,1)^2$. This concludes the proof of the result.
\end{enumerate}
\end{proof}

\noindent{\Large{\bf Acknowledgments}}

\

\noindent This paper has been partially supported by the State Research Agency of the Spanish Ministry of Science and FEDER-EU, project PID2022-137228OB-I00 (MICIU / AEI  / 10.13039/501100011033).

\end{document}